\newtheorem{theorem}{Theorem}
\newtheorem{lemma}[theorem]{Lemma}
\newtheorem{prop}[theorem]{Proposition}
\newtheorem{defn}[theorem]{Definition}
\newtheorem{cor}[theorem]{Corollary}
\newtheorem*{remark}{Remark}
\renewenvironment{remark}[1][Remark]{\begin{trivlist}
\item[\hskip \labelsep {\bfseries #1}]}{\end{trivlist}}
\newcommand{\lt}{\left}
\newcommand{\rt}{\right}
\newcommand{\bpm}{\begin{pmatrix}}
\newcommand{\epm}{\end{pmatrix}}
\newcommand{\bsm}{\lt(\begin{smallmatrix}}
\newcommand{\esm}{\end{smallmatrix}\rt)}
\renewcommand{\d}{\mathrm{d}}
\newcommand{\ds}{\mathrm{d}s}
\newcommand{\dw}{\mathrm{d}w}
\newcommand{\dv}{\mathrm{d}v}
\newcommand{\dt}{\mathrm{d}t}
\newcommand{\du}{\mathrm{d}u}
\newcommand{\dx}{\mathrm{d}x}
\newcommand{\dr}{\mathrm{d}r}
\newcommand{\dth}{\mathrm{d}\theta}
\newcommand{\dy}{\frac{\mathrm{d}y}{y}}
\newcommand{\dz}{\frac{\mathrm{d}x \mathrm{d}y}{y^2}}
\newcommand{\dxi}{\mathrm{d}\xi}
\newcommand{\D}{\ensuremath{\mathbb{D}}}
\newcommand{\Z}{\ensuremath{\mathbb{Z}}}
\newcommand{\N}{\ensuremath{\mathbb{N}}}
\newcommand{\R}{\ensuremath{\mathbb{R}}}
\newcommand{\C}{\ensuremath{\mathbb{C}}}
\renewcommand{\H}{\ensuremath{\mathbb{H}}}
\newcommand{\vep}{\varepsilon}
\DeclareMathOperator{\GL}{GL}
\DeclareMathOperator{\SL}{SL}
\renewcommand{\Im}{\operatorname{Im}}
\renewcommand{\Re}{\operatorname{Re}}
\DeclareMathOperator*{\Res}{Res}
\newcommand{\aquad}{\qquad\qquad}
\newcommand{\bquad}{\aquad\aquad}
\newcommand{\cquad}{\bquad\bquad}
\renewcommand{\th}{\textsuperscript{th}}
\newcommand{\inv}{^{-1}}
\newcommand{\hf}{\frac 12}
\newcommand{\qtr}{\frac14}
\newcommand{\Vol}{\operatorname{Vol}}
\renewcommand{\mod}{\text{ mod }}
\newcommand{\kron}[2]{\lt(\frac{#1}{#2}\rt)}
\title{Subconvexity for Half Integral Weight $L$-functions}
\author{Eren Mehmet K\i ral}
\date{\today}
\begin{document}

\begin{abstract}
	We prove a subconvexity bound in the conductor aspect for the $L$-function $L(s,f,\chi)$ where $f$ is a half integer weight modular form. This $L$-function has analytic continuation and functional equation, but no Euler product. Due to the lack of an Euler product, one does not expect a Riemann hypothesis for half integer weight modular forms. Nevertheless one may speculate a Lindel\"{o}f-type hypothesis, and this current subconvexity result is an indication towards its truth.
	\keywords{L-functions \and Half integer weight modular forms \and Subconvexity \and Shifted convolution sums}

\end{abstract}

\maketitle


\section{Introduction}\label{sec:Introduction}
	A very general definition of $L$-functions and their functional equations, given in chapter 5 of \cite{IwaniecAnalyticBook}, goes roughly as follows. Let $f$ be some function of arithmetic interest (or a meaningless symbol) with complex numbers $A_f(n)$ for each $n \in \N$. Assume that the associated $L$-function
	\[
		L(s,f) = \sum_{n=1}^\infty \frac{A_f(n)}{n^s}
	\]
	is convergent for $\Re(s) >1$. Let
	\[
		\gamma(s,f) = \pi^{-ds/2} \prod_{j=1}^d \Gamma\lt(\frac{s+\kappa_j}{2}\rt)
	\]
	be a product of gamma factors.  Here the $\kappa_j$ are either real or come in complex conjugate pairs, and they satisfy $\Re(\kappa_j) >-1$. The integer $d$ is called the degree of $L(s,f)$. Let $q(f)\geq 1$ be an integer called the conductor of $L(s,f)$. The function
	\[
		L^* (s,f) := q(f)^{\frac s2} \gamma(s,f) L(s,f),
	\]
	is called the completed $L$-function associated to $L(s,f)$, and often satisfies a functional equation of the form
	\[
		L^*(s,f) = \vep(f)L^*(1-s,\tilde{f}).
	\]
	Here $\vep(f)$ is a complex number with $|\vep(f)|=1$, called the root number of $L(s,f)$, and $\tilde{f}$ is called the dual of $f$, satisfying $A_{\tilde{f}}(n) = \overline{A_f(n)}$. The completed $L$-function $L^*(s,f)$ is a meromorphic function of order $1$ with at most poles at $s = 0$ and $s=1$. 
	
	The above conditions encompass a wide range of important functions in analytic number theory.  For any such $L$-function, the functional equation reveals the value of the function in the left half-plane $\Re(s)<0$ as a reflection of the corresponding value in the right half-plane $\Re(s)>1$. The region in between is called the critical strip, with midpoint $1/2$. 
	
	An important question in many contexts is the size of the value of the $L(s,f)$ at the center of the critical strip, i.e.\ at $s = \tfrac12 + it$. The Phragm\'{e}n-Lindel\"{o}f convexity principle applied to the points $s = 1 +\vep +it$ and $s = -\vep +it$ implies,
	\begin{equation}\label{eq:convexity}
		L\left(\tfrac12+it, f\rt) \ll \lt(q(f)(1+|t|)^d\prod_{j=1}^d \lt(1+|\kappa_j|\rt)\rt)^{\qtr + \vep}.
	\end{equation}
	 This is called a convexity bound. 
	 
	 As an example let $f$ be a holomorphic modular form of even weight $k$ and level $N$. Consider the $L$ function of $f$ twisted by a Dirichlet character $\chi$ of conductor $Q$. Then the conductor of $L(s,f\otimes\chi)$ is $q(f) = NQ^2$, and the $\kappa$'s are $(k \pm 1)/2$. Any improvement on \eqref{eq:convexity} in any of the aspects, imaginary part, level, conductor of twisting character, or weight is called a subconvexity bound in that particular aspect.

	 There is a wide class of functions that satisfy all the properties above, and furthermore have an Euler product. 
	 \[
	 	L(s,f) = \prod_{p \text{ prime}} \prod_{j=1}^d (1-\alpha_j(p) p^{-s})\inv,
	 \]
	 for some complex numbers $\alpha_j(p)$ of absolute value 1. It is generally believed that these functions satisfy a Riemann hypothesis, which is to say that the completed $L$-functions only vanish on the critical line $\Re(s) = 1/2$. Conrey and Ghosh have proven in \cite{conrey2006remarks} that any sich $L$-function as above, satisfying the Riemann hypothesis must also satisfy the generalized Lindel\"{o}f hypothesis. That is, 
	 \[
	 	L(\tfrac 12 + it, f) \ll_{\vep, d} \lt(q(f)(1+|t|)^{d} \prod_{j=1}^d (1+|\kappa_j|) \rt)^\vep.
	 \]
	 Therefore any subconvexity bound should be considered as an improvement towards the Lindel\"{o}f hypothesis.
	 	 
	 Many instances of subconvexity bounds have been proven. For example Martin Huxley in \cite{Huxley} has proven that
	 \[
	 	\zeta(\tfrac 12 + it) \ll t^{\frac{32}{205} + \vep}
	 \]
	 for any epsilon. For the zeta function the analytic conductor only has the $t$ aspect. In the case of a Dirichlet $L$-function $L(s,\chi)$, for a Dirichlet character $\chi$ modulo $Q$, the $L$-function has conductor $Q$. An example of a subconvexity bound in the conductor aspect is
	 \[
	 L(\tfrac12, \chi) \ll Q^{\frac 3{16} + \vep}.
	 \]
	 as was proven by Burgess in \cite{burgess1963character}.
	 
	 In these instances the $L$-functions in question do have Euler products with the given restriction on the magnitude of $\alpha_j(p)$ and furthermore, they are suspected of satisfying a Riemann hypothesis hence a Lindel\"of hypothesis. Conversely it is possible to find examples of Dirichlet series satisfying all the properties above, except for the Euler product, which do not satisfy any bound better than the convexity bound. A very simple example of such a Dirichlet series was given in \cite{conrey2006remarks}. Let
	\begin{equation*}
			D(s) = \sum_{n=1}^\infty \frac{d(n)\cos\lt(\tfrac{2\pi n}{q}\rt)}{n^s}.
	\end{equation*}
	Here $q$ is a prime and $d(\cdot)$ is the divisor function. In fact this Dirichlet series has conductor $q^2$ and $D(1/2)$ gets as large as $\sqrt{q}\log q$ as $q$ goes to infinity through primes. This is the convexity bound, and happens to be the best possible.
	
	This counterexample may seem to suggest that the existence of an Euler is crucial for a Lindel\"of hypothesis to hold.  Nevertheless there may be other conditions under which a Lindel\"of type hypothesis might be true. What causes an Euler product in the case of $L$-functions of automorphic forms is the fact that the automorphic forms are simultaneous eigenfunctions for the Hecke operators. Therefore one might suspect that it is the property of being a simultaneous eigenfunction of all the Hecke operators that is the essential property in implying the Lindel\"of Hypothesis. This was conjectured informally by Jeffrey Hofftstein in Obefwohlfach in 2011. The object of this paper is to provide some evidence that this may be the case.

	One interesting class of $L$-functions is the collection of Mellin transforms of newforms on the $n$-fold cover of $\GL_2$, with $n \geq 2$. These can be simultaneously diagonalized by the Hecke operators, but, except for the case of the quadratic theta function, these $L$-functions do not have Euler products.  More precisely, let $k$ be a half-integer, and $f$ a holomorphic cusp form of weight $k$ and level $N$. Let $\chi$ be a primitive Dirichlet character of modulus $Q$. Assume that $f$ has the Fourier expansion,
		\[
			f(z) = \sum_{n=1}^\infty A(n) n^{\frac {k-1}{2}} e(nz).
		\]
		For $s$ with $\Re(s)>1$, define the $L$-series
		\[
			L(s,f,\chi) := \sum_{n=1}^\infty \frac{A(n)\chi(n)}{n^s}.
		\]
		This function has analytic continuation and a functional equation to all $s$. The main objective of this paper is to prove the following theorem, which shows that 
despite the lack of an Euler product, there is more structure for the $L$-function of a half integer weight modular form, than is implied by the convexity principle.

	\begin{theorem}\label{thm:main}
		Let $f$ be a half integer modular form, $\chi$ a primitive Dirichlet character modulo $Q$. For any $\vep >0$,
		\[
			L(\tfrac 12,f,\chi) \ll_{f,\vep} Q^{\frac 38+\frac{\theta}{4}+\vep}.
		\]
		Here $\theta$ is the best progress towards the Ramanujan conjecture for the Fourier coefficients of level zero Maass forms.
	\end{theorem}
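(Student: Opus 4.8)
The plan is to express $L(\tfrac12,f,\chi)$ via an approximate functional equation as a sum $\sum_{n \lesssim Q} A(n)\chi(n) n^{-1/2}$ (up to smooth weights and a dual sum of the same shape), so that the problem becomes bounding a twisted sum of Fourier coefficients of the half-integral weight form $f$ of length roughly the conductor $Q$. The trivial bound on this sum, using only $|A(n)| \ll n^{\vep}$ on average (Rankin--Selberg), reproduces convexity $Q^{1/4+\vep}$... wait, more carefully: the completed $L$-function has conductor of size $\asymp Q^2$ (the twist squares the conductor), so convexity is $Q^{1/2+\vep}$ and the approximate functional equation has length $\asymp Q$; the target $Q^{3/8+\theta/4+\vep}$ is a genuine saving of $Q^{1/8-\theta/4}$ over convexity. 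To get this saving I would not attack the sum directly but instead amplify the arithmetic of the half-integral weight coefficients: by the Shimura correspondence (or more elementarily by Waldspurger-type / theta-lift identities) the coefficients $A(n)$ at squarefree arguments are governed by central values of $\GL_2$ $L$-functions, but for the purposes of a subconvexity bound the cleaner route is to open $A(n)$ using its own Fourier/Poincaré-series definition, i.e.\ realize the twisted sum as a shifted convolution problem.

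Concretely, the key steps in order would be: (1) Apply the approximate functional equation to reduce to $S = \sum_{n} A(n)\chi(n) V(n/Q) n^{-1/2}$ for a fixed smooth $V$. (2) Detect the character $\chi$ additively via Gauss sums, $\chi(n) = \tau(\chi)^{-1}\sum_{a \bmod Q} \bar\chi(a) e(an/Q)$, turning $S$ into an average over $a$ of sums $\sum_n A(n) e(an/Q) V(n/Q)n^{-1/2}$, which is an additively twisted sum of Hecke eigenvalues of a half-integral weight form. (3) Interpret each such additively twisted sum spectrally: unfold $f$ against a weighted Poincaré series $P_{a/Q}$, or equivalently apply a Voronoi-type summation formula for half-integral weight forms, which transforms the $n$-sum at modulus $Q$ into a dual sum whose terms involve Salié or Kloosterman-type sums and the Fourier coefficients again — this is where the half-integral weight structure enters, and where the relevant Voronoi formula (via the metaplectic Bruhat decomposition) produces Salié sums that factor, giving an extra square-root cancellation not available in the generic (no Euler product) situation. (4) After Voronoi, bound the dual sum using the Weil/Salié bound for the exponential sums together with a large-sieve / Rankin--Selberg input $\sum_{n\le X}|A(n)|^2 \ll X^{1+\vep}$; the parameter $\theta$ enters only through the auxiliary level-zero Maass forms appearing in the spectral expansion of the Poincaré series (equivalently in bounding the off-diagonal via the Kuznetsov formula), which is why the final exponent is $\tfrac38 + \tfrac\theta4$.

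The main obstacle I anticipate is step (3)–(4): making the Voronoi/shifted-convolution machinery work uniformly in the additive shift $a/Q$ and extracting precisely the right amount of cancellation. Unlike the $\GL_2$ case, the half-integral weight Voronoi formula has a metaplectic twist, and the resulting exponential sums are Salié sums rather than Kloosterman sums; one must verify that these evaluate in closed form (essentially as Gauss sums times a character), so that the $a$-average can be performed and combined with the $\bar\chi(a)$ from step (2) to produce a main-term cancellation rather than square-root-of-the-number-of-terms. The bookkeeping of conductors — keeping track of the level $N$ of $f$, the modulus $Q$, and gcd conditions between $a$ and $Q$ — is delicate, and the dependence on $f$ must be allowed to be ineffective (hence $\ll_{f,\vep}$). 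A secondary technical point is handling the contribution of the continuous spectrum and of non-squarefree $n$, where the Shimura-type structure degenerates; these should contribute terms of the same or smaller order after a dyadic decomposition, but verifying this requires the Rankin--Selberg bound and a careful treatment of the Eisenstein contribution, with the non-tempered Maass eigenvalues costing exactly the factor $Q^{\theta/4}$.
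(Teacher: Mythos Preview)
Your proposal has a genuine gap: you have omitted the amplification step, which is the engine of the entire argument. Opening $\chi$ via Gauss sums and then applying Voronoi to each additive twist $\sum_n A(n)e(an/Q)$ of length $\asymp Q$ at modulus $Q$ simply reproduces the dual sum in the approximate functional equation; you go in a circle and recover convexity, not subconvexity. There are no ``off-diagonal'' terms in your scheme because nothing has been squared or averaged --- off-diagonal shifted convolutions only appear after one embeds the single character $\chi$ into a family (the amplified second moment over all $\psi \pmod Q$ with amplifier $\lvert\sum_{\ell\sim L}\psi(\ell)\overline{\chi(\ell)}\rvert^2$), applies orthogonality, and is left with the sums $\sum_{m_1\ell_1 \equiv m_2\ell_2 \pmod Q} A(m_1)\overline{A(m_2)}$. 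This is the Duke--Friedlander--Iwaniec mechanism, and it is what the paper actually does; the exponent $3/8$ comes from optimizing the amplifier length $L$ against the diagonal and shifted-sum contributions.

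The second missing piece is the half-integral-weight input that makes the shifted convolution tractable. The paper does not use Sali\'e sums, Voronoi, or the Shimura correspondence at all; it realizes the shifted sums via a spectral double Dirichlet series $Z_Q(s,w)$ built from $\langle f(\ell_1 z)\overline{f(\ell_2 z)}y^k, P_h(\cdot,s)\rangle$, and the crucial new ingredient (Sections~\ref{sec:AutomorphicKernel}--\ref{sec:selbergTransform}) is an exponential-decay bound $\sum_{|t_j|\sim T}|\langle f\bar g\, y^k, u_j\rangle|^2 \ll T^{4k+2}e^{-\pi T}$ for half-integral weight $f,g$, proved by a Sarnak-style automorphic-kernel argument on the disc model (Reznikov's representation-theoretic proof is unavailable here). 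The parameter $\theta$ enters not through Kuznetsov but through the size of individual Maass-form Fourier coefficients $\rho_j(-hQ)$ appearing in the residues of $Z_Q$. Your Sali\'e-sum intuition --- that half-integral weight buys extra closed-form cancellation --- is a red herring: for a generic half-integral-weight cusp form there is no such explicit evaluation, and the paper's point is precisely that subconvexity holds despite the absence of this multiplicative structure.
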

	\begin{remark}
	The number $\frac38$ we get is what is called a Burgess type bound in the literature of subconvexity bounds. This is a name given to a bound which is three quarters of the convexity bound in any of the aspects of the analytic conductor. Such a bound appears frequently, even if achieved through different means. It seems to be a natural boundary for several methods, and the name is given in honor of the bound in \cite{burgess1963character}. Plugging in the Kim-Sarnak bound $7/64$ for the value of $\theta$ gives us $L(\tfrac12,f,\chi) \ll Q^{0.40234\ldots}$.
	\end{remark}
	
	In a similar vein, a subconvexity bound for a double Dirichlet series 
	\begin{equation}\label{eq:ValentinDoubleDirichlet}
		Z(s,w;\psi,\psi') = \zeta(2s+2w-1)\sum_{d} \frac{L_2(s,\chi_d \psi)\psi'(d)}{d^w}
	\end{equation}
	for the $\Im(s)$ and $\Im(w)$ aspects was obtained by Valentin Blomer in \cite{blomer2011subconvexity}. Specializing to $s = 1/2$, this gives a subconvexity bound for a Dirichlet series with a functional equation but without an Euler product. This, and our $L$-function of a half integer weight modular form can be considered to be of similar origin as one can identify \eqref{eq:ValentinDoubleDirichlet} with the Mellin transform of a half integral weight Eisenstein series. Our Theorem \ref{thm:main} shows that twists of Mellin transforms of half integer weight cusp form satisfy a subconvexity bound in the conductor aspect.
	
	\subsection{A sketch of the approach}
	One of the critical issues that arise when approaching a problem like this from a spectral point of view is the bounding from above of a certain triple product.   This was first accomplished by  Sarnak in \cite{SarnakIMRN94}, where he uses a geometric method to prove exponential decay in the eigenvalue of triple inner products of weight zero Maass forms.  A corresponding bounding of a triple inner product in the case of integral weight cusp forms is done in the appendix of \cite{JeffShiftedSum} by Andrei Reznikov using the uniqueness of trilinear forms in certain automorphic representations.  The main contribution of this work is the proof of  a similar result when two of the automorphic forms in the triple product are half integer weight modular forms.  In this case the representation theoretic method of Reznikov is not applicable.  This is done in  sections \ref{sec:AutomorphicKernel} to \ref{sec:selbergTransform}.  We roughly follow the method of Sarnak, obtaining explicit bounds on the polynomial growth attached to the exponential decay, depending on the weight and the sup-norms of the modular forms.

		With this vital ingredient we proceed as follows, along the lines of \cite{duke2002subconvexity}. The notation that is used is given in section \ref{sec:notation}. We begin, in section \ref{sec:amplification}, by showing that it is enough to bound expressions of the form 
	\[
		\lt|\sum_m A(m) \chi(m) H\lt(\frac mx\rt)\rt|,
	\]
	for a smooth and compactly supported function $H$.	We then consider  an amplified sum,
	\begin{equation}\label{eq:amplifiedsum}
		\sum_{\psi \pmod Q} \lt|\sum_m A(m) \chi(m) H\lt(\frac mx\rt)\rt| \lt|\sum_{\substack{L \leq \ell\leq 2L \\ \ell \text{ prime}}}\psi(\ell)\overline{\chi(\ell)}\rt|^2
	\end{equation}
	After expanding the squares in both terms, and applying cancellation due to the orthogonality of characters  two types of terms appear in this summation: those we call \emph{diagonal} terms and those called \emph{off-diagonal} terms. 
	
	 Bounding the diagonal term is easier and is done in section \ref{sec:diagonalSum} with the aid of the Rankin-Selberg type Dirichlet series,
	 \[
	 	\sum_{m\geq1} \frac{a(m)\overline{b(m)}}{m^s}
	 \]
	 where $a(m)$ and $b(m)$ are Fourier coefficients of the oldforms $f(\ell_1z), f(\ell_2z)$ respectively. This Dirichlet series is obtained by convolving $f(\ell_1z)\overline{f(\ell_2z)}y^k$ with an Eisenstein series.
	   
	   In section \ref{sec:doubleDirichlet} we estimate the off-diagonal terms.  To that end we consider a shifted convolution double Dirichlet series
	\[
		Z(s,w) = \sum_{m\geq1, h\geq1} \frac{a(m)\overline{b(m+h)}}{m^s h^{w+k-1}}.
	\]
	The off diagonal terms are interpreted as an inverse Mellin transform of $Z_Q$,
	and in section \ref{sec:movingLines} we bound these terms by manipulating the inverse Mellin transform of the function $Z_Q$.
	
	We conclude the proof of Theorem \ref{thm:main} in section \ref{sec:subconvexity}.
	
	The exposition sketches the parts of proof that differ only very slightly from \cite{JeffShiftedSum}. Full details are given for Sections \ref{sec:diagonalSum} and \ref{sec:AutomorphicKernel}-\ref{sec:subconvexity}.


\section{Notation}\label{sec:notation}
	
	For any complex number $z \in \C$, set
	\[
		e(z) = e^{2\pi i z}.
	\]
	
	Let $\H = \{z = x + iy \in \C: y>0\}$, the upper half plane.
	
	Let $\Gamma_0(N)$ be the set of integral $2\times2$ matrices such that the lower left entry is divisible by $N$. We shall also denote $\Gamma =\Gamma_0(N)$. The set of inequivalent cusps of $\Gamma$ will be given by
	\[
	\lt\{u/w: w|N, \,\, (u,w) = 1,\,\, u\!\! \mod (w,N/w)\rt\}.
	\]
	We will make the assumption that $N/4$ will be squarefree and odd. Therefore the cusps $\mathfrak{a}$ will be of the form $1/w$ where $w|N$. 
	
	Let us denote the volume of the modular surface $\Gamma_0(N) \backslash \H$ by $\mathcal{V}$. The exact formula for the volume is given by $$\mathcal{V} = \frac \pi3 [\SL(2,\Z):\Gamma_0(N)] = \frac\pi3 N \prod_{p|N}\lt(1+\frac 1p\rt).$$ 
	
	Given $\gamma = \bsm a&b\\c&d\esm \in \Gamma_0(N)$, define the half integral weight cocyle as
	\[
		j(\gamma,z) = \vep_d\inv \kron{c}{d} (cz+d)^\hf.
	\]
	Here $\kron{c}{d}$ denotes the Kronecker symbol as in \cite{ShimuraHalfInt73}. Furthermore
	\[
		\vep_d = \begin{cases}1 &\text{ if } d \equiv 1 \mod 4\\ i &\text{ if } d \equiv 3 \mod 4 \end{cases}.
	\]
	
	Let $\chi$ be a Dirichlet character of modulus dividing $N$. Let $f$ be a holomorphic cusp form of level $N, (4|N)$, nebentypus $\chi$ and weight $k$, a half-integer. This means that for any $\gamma = \bsm a&b\\c&d  \esm\in \Gamma_0(N)$, 
	\[
		f(\gamma z) = \chi(d) j(\gamma,z)^{2k} f(z)
	\]
	is satisfied. Furthermore the constant term of the Fourier expansions at all cusps vanish. More details are given in \cite{ShimuraHalfInt73}. If the nebentypus of a modular form is not mentioned then it is assumed to have a trivial character.
	
	Furthermore, assume that $f$ is an eigenfunction of the operator 
	\[
		(\tilde{W}_Nf)(z) = \lt. f \rt|_{k,\lt(\begin{smallmatrix} 0&-1\\N&0\end{smallmatrix}\rt)}(z) = i^{-k}N^{\frac k2} (Nz)^{-k} f\lt(\frac{-1}{Nz}\rt).
	\] 
	The eigenvalue has to be $\pm 1$. Let us call this eigenvalue $\vep(f)$. Set 
	\[
		f(z) = \sum_{n=1}^\infty a(n) e(nz) = \sum_{n=1}^\infty A(n) n^{\frac{k-1}{2}}e(nz).
	\] 
	The Fourier coefficients are thus normalized as this lets the functional equation be symmetric around $s = \frac12$. For $\chi$ a Dirichlet character modulo $Q$, let the $L$-function of $f\otimes \chi$ be defined as
	\begin{equation}\label{eq:Lfunction}
		L(s,f,\chi) = \sum_{n=1}^\infty \frac{A(n)\chi(n)}{n^s}. 
	\end{equation}
	Define the completed $L$-function of $f_\chi$ to be,
	\begin{align*}
		L^*(s,f,\chi) &:= (\sqrt{N}Q)^{s} \int_0^\infty f_\chi(iy) y^{s+\frac{k-1}{2}} \dy\\
		& =(\sqrt{N}Q)^s (2\pi)^{-(s+\frac{k-1}{2})} \Gamma\lt(s+\tfrac{k-1}{2}\rt) \sum_{n=0}^\infty \frac{A(n)\chi(n)}{n^s}\\
 		&= 2\pi^{\frac{k-1}{2}} \lt(NQ^2\rt)^{\frac{s}{2}} \pi^{-s} \Gamma\lt(\frac{s + \frac{k-1}{2}}{2}\rt)\lt(\frac{s + \frac{k+1}{2}}{2}\rt)L(s,f,\chi).
	\end{align*}
	Here $f_\chi(z) = \sum_{n=1}^\infty A(n)n^{\frac{k-1}{2}}\chi(n) e(nz)$ is the twisted modular form of level $NQ^2$. We can realize $f_\chi$ as an average over the additive twists:
	\[
		f_\chi(z) = \frac{1}{g(1,\overline{\chi})} \sum_{u \pmod Q} \overline{\chi(u)} f\lt(z + \frac uQ\rt).
 	\]
 	Using this we obtain
 	\[
 		L^*(s,f,\chi) = \frac{1}{g(1,\overline{\chi})} \sum_{u \pmod Q} \overline{\chi(u)} L^*(s,f, \tfrac{u}{Q}),
	\]
		where 
	\[
		L(s,f,\tfrac uQ) := (\sqrt{N}Q)^s \int_0^\infty f\lt(iy + \frac uQ\rt) y^{s + \frac{k-1}{2}} \dy.
	\]
	Also here $g(n,\psi)$ stands for the Gauss sum for the integer $n$ and the character $\psi$ of modulus $Q$, not necessarily primitive:
	\[
		g(n,\psi) := \sum_{u \pmod Q} \psi(u) e\lt(\frac{nu}{Q}\rt).
	\]
	
	We sketch the proof of the functional equation of $L^*(s,f,\chi)$ in case $(Q,N) = 1$. Using the matrix identity
	\[
		\lt(\begin{matrix} 1&u/Q\\ 0&1 \end{matrix}\rt) \lt(\begin{matrix} 0&-1/Q\\ NQ&0\end{matrix}\rt)
		=
		\lt(\begin{matrix} (Nuv + 1)/Q & u\\ Nv& Q \end{matrix}\rt)\lt(\begin{matrix}0&-1\\N&0\end{matrix}\rt)
		\lt(\begin{matrix} 1 & v/Q\\ 0&1\end{matrix}\rt),
	\]
	we can show that
	\[
		L^*(s,f,\tfrac{u}{Q}) = \vep(f) \vep_Q^{-2k} \kron{Nv}{Q} L^*(1-s, f, \tfrac vQ)
	\]
	where $v$ is an integer which satisfies $Nuv \equiv -1 \pmod Q$.
	
	This implies a functional equation 
	\begin{equation} \label{eq:multiplicativelyTwistedFunctionalEquation}
		L^*(s,f,\chi) = \vep^*(f,\chi) L^*(1-s,f,\chi,\chi')
	\end{equation}
	where $\vep^*(f,\chi)$ is a quantity of absolute value $1$, explicitly given as,
	\[
		\vep^*(f,\chi) := \vep(f) \vep_Q^{-2k} \kron{N}{Q} \chi(-N).
	\]
	We mean, by the $L$-function on the right,
	\[
		L^*(s,f,\chi,\chi') = \lt(\sqrt{N}Q\rt)^s (2\pi)^{-(s + \frac{k-1}{2})} \Gamma\lt(s + \tfrac {k-1}{2}\rt) \frac{1}{g(1,\overline{\chi})} \sum_{n=1}^\infty \frac{A(n) g(n,\chi')}{n^s}
	\]
	with 
	\[
		\chi' = \chi \kron{\cdot}{ Q}.
	\]
	
	Notice that if $\chi' = \chi \kron{\cdot}{Q}$ is primitive modulo $Q$, then  \eqref{eq:multiplicativelyTwistedFunctionalEquation} implies $L^*(s,f,\chi) = \vep(f,\chi) L^*(1-s,f,\chi') $ where $\vep(f,\chi) = \vep^*(f,\chi) g(1, \chi') g(1,\overline{\chi})^{-1}$.
	
	The functional equation \eqref{eq:multiplicativelyTwistedFunctionalEquation} also implies the  approximate functional equation as in \cite{IwaniecAnalyticBook}. 
	
	The approximate functional equation of $L(s,f,\chi)$ is given as follows. For a function $V$ on the positive real line, satisfying 
	\begin{align*}
	x^jV_s^{(j)}(x) &\ll \lt(1+ \frac{x}{k}\rt)^{-A}\\
	x^jV_s^{(j)}(x) &= \delta_{j,0} + O\lt(\lt(\frac{x}{k}\rt)^\alpha\rt)
	\end{align*}
	with $A>0, 0<\alpha< \tfrac 13 (\tfrac{k}{2} + \Re(s))$, we have the equality 
	\begin{align}\label{eq:approximateFunctional}
		L(\tfrac12,f,\chi) =& \sum_{m=1}^\infty \frac{A(m)\chi(m)}{\sqrt{m}}V\lt(\frac{2\pi m}{\sqrt{N}Q}\rt) \\
		&+  \vep^*(f,\chi)\frac{1}{g(1,\overline{\chi})} \sum_{m=1}^\infty \frac{\overline{A(m)}g(m,\chi')}{\sqrt{m}} V\lt(\frac{2\pi m}{\sqrt{N}Q}\rt).\notag
	\end{align} 
	
	Let $L^2(\Gamma_0(N)\backslash \H)$ be the Hilbert space of square integrable functions on the upper half plane, invariant under $\Gamma_0(N)$. The inner product, making this space a Hilbert space is given by
	\[
		\langle \phi_1,\phi_2 \rangle = \frac{1}{\mathcal{V}} \iint_{\Gamma_0(N)\backslash \H}\phi_1(z)\overline{\phi_2(z)}\dz.
	\]
	This is a Riemann surface, we call the differential operator
	\[
		\Delta = -y^2\lt(\frac{\partial^2}{\partial x^2} + \frac{\partial^2}{\partial y^2}\rt)
	\]
	the Laplace-Beltrami operator, and nonconstant eigenvalues of this operator are called Maass forms. 
	
	We write the Fourier expansion of a Maass form $u_j$ with Laplace eigenvalue $\lambda_j = \tfrac{1}{4} + t_j^2$, normalized to have $L^2$ norm equal to one, as,
	\begin{equation}\label{eq:FourierMaass}
		u_j(z) = \sum_{m\neq 0} \rho_j(m) \sqrt{y}K_{it_j}(4\pi |m| y)e(mx).
	\end{equation}
	Further the Eisenstein series at the cusp $\mathfrak{a}$ has a Fourier expansion which is given by the following proposition, taken from \cite{blomer2004shifted}
		\begin{prop}\label{prop:eisensteinFourier}
			Let $E_{\mathfrak{a}}^{(N)}(z,s)$ be the Eisenstein series at the cusp $\mathfrak{a}$ of $\Gamma_0(N)$. Note that with our assumption that $N/4$ is odd and squarefree all cusps $\mathfrak{a} = 1/w$ for $w |N$. Then the Fourier expansion of is given by,
			\begin{align*}
				E_{\mathfrak{a}}(z,s)  =& \delta_{\mathfrak{a},\infty} y^s + \frac{\zeta^*(2s-1)}{\zeta^*(2s)}\rho_{\mathfrak{a}}(s) y^{1-s} \\
				&+ \frac{2\pi^s \sqrt{y}}{\Gamma(s)} \sum_{m\neq 0} |m|^{s-\hf}\rho_{\mathfrak{a}}(s,m)K_{s-\hf}(2\pi |m|y)e(mx),
			\end{align*}
			where $\zeta^*(s) = \zeta(s)\Gamma\lt(\tfrac s2\rt) \pi^{-\frac s2}$ is the completed zeta function,
			\[
				\rho_{\mathfrak{a}}(s) = \phi(w)\lt(\frac{1}{wN}\rt)^s \prod_{p|N}(1-p^{-2s})\inv \prod_{p|\frac Nw}(1-p^{1-2s}),
			\]
			and
			\[
				\rho_{\mathfrak{a}}(s,m) = \lt(\frac{1}{wN}\rt)^s \sum_{(c,N/w)=1} \frac{1}{c^{2s}} \sum_{d \text{ (mod }c w)^*}e\lt(-\frac{md}{c w}\rt).
			\]
		\end{prop}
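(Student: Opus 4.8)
This is the classical unfolding of an Eisenstein series, the hypothesis that $N/4$ is odd and squarefree serving only to make the cusp data of $\Gamma_0(N)$ explicit (in particular every cusp is of the form $1/w$ with $w\mid N$); full details are in \cite{blomer2004shifted}. Write $\Gamma=\Gamma_0(N)$, let $\Gamma_{\mathfrak{a}}$ be the stabiliser of $\mathfrak{a}=1/w$, and fix a scaling matrix $\sigma_{\mathfrak{a}}\in\SL(2,\R)$ with $\sigma_{\mathfrak{a}}\infty=\mathfrak{a}$ and $\sigma_{\mathfrak{a}}\inv\Gamma_{\mathfrak{a}}\sigma_{\mathfrak{a}}=\{\pm\bsm 1&n\\0&1\esm:n\in\Z\}$, so that $E_{\mathfrak{a}}(z,s)=\sum_{\gamma\in\Gamma_{\mathfrak{a}}\backslash\Gamma}\Im(\sigma_{\mathfrak{a}}\inv\gamma z)^s$ for $\Re(s)>1$. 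The first step is to make $\sigma_{\mathfrak{a}}$ explicit and to read off the bottom rows of the matrices $\sigma_{\mathfrak{a}}\inv\gamma$ as $\gamma$ runs over $\Gamma_{\mathfrak{a}}\backslash\Gamma$. Using $w\mid N$ and the squarefreeness of $N/4$, which pins down the width and the scaling matrix of each cusp, one finds that $\Im(\sigma_{\mathfrak{a}}\inv\gamma z)^s$ contributes the single term $y^s$ exactly when $\mathfrak{a}=\infty$ (the coset of the identity, accounting for $\delta_{\mathfrak{a},\infty}y^s$), while the contribution of all remaining cosets equals
\[
	\lt(\frac{1}{wN}\rt)^s\sum_{\substack{c\geq 1\\ (c,N/w)=1}}\frac{1}{c^{2s}}\sum_{\substack{d \bmod cw\\ (d,cw)=1}}\sum_{n\in\Z}\frac{y^s}{\lt|z+\tfrac{d}{cw}+n\rt|^{2s}}.
\]

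The second step is the Fourier expansion in $x$. Applying Poisson summation in $n$ together with the classical integrals
\[
	\int_{\R}\frac{y^s}{(x^2+y^2)^s}\,\dx=\sqrt\pi\,\frac{\Gamma(s-\hf)}{\Gamma(s)}\,y^{1-s},\qquad \int_{\R}\frac{y^s\,e(-mx)}{(x^2+y^2)^s}\,\dx=\frac{2\pi^s}{\Gamma(s)}|m|^{s-\hf}\sqrt y\,K_{s-\hf}(2\pi|m|y)
\]
(the second valid for $m\neq 0$), the inner $d$-sum collapses to the additive character sum $\sum_{d \bmod cw,\,(d,cw)=1}e(-md/(cw))$, and summing over $c$ reconstructs, for $m\neq 0$, the coefficient $\rho_{\mathfrak{a}}(s,m)$ against the displayed Bessel factor, and for $m=0$ the term $(wN)^{-s}\sum_{(c,N/w)=1}c^{-2s}\phi(cw)$ times $\sqrt\pi\,\Gamma(s-\hf)\Gamma(s)\inv y^{1-s}$.

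It remains, in the third step, to put the $m=0$ term in closed form. Since $\zeta^*(2s-1)/\zeta^*(2s)=\sqrt\pi\,\Gamma(s-\hf)\Gamma(s)\inv\,\zeta(2s-1)/\zeta(2s)$, the archimedean factors already match those just produced, so everything reduces to the arithmetic identity
\[
	\sum_{\substack{c\geq 1\\ (c,N/w)=1}}\frac{\phi(cw)}{c^{2s}}=\phi(w)\,\frac{\zeta(2s-1)}{\zeta(2s)}\prod_{p\mid N}(1-p^{-2s})\inv\prod_{p\mid\frac Nw}(1-p^{1-2s}).
\]
I would prove this by factoring both sides into Euler products: writing $c=c_0c_1$ with $c_0$ supported on the primes dividing $N$ and $(c_1,N)=1$, and using $\phi(p^a)=p^a-p^{a-1}$, each local factor is a geometric series. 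Primes dividing $w$, primes dividing $N/w$, and primes dividing neither contribute three different shapes, and the ramified prime $p=2$ — where $N/4$ being odd is exactly what keeps the local computation elementary — must be handled separately; assembling the pieces gives the right-hand side.

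The analytic input here, Poisson summation and the Bessel integral, is entirely standard. The real work, and the step I expect to be the main obstacle, is the combinatorial bookkeeping: pinning down the scaling matrices and coset representatives in the first step so that the normalisation $(wN)^{-s}$ and the coprimality condition $(c,N/w)=1$ emerge correctly, and then pushing the Euler product computation of the third step through with the right local factor at every prime, the prime $2$ included.
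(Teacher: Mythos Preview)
The paper does not prove this proposition at all: it is simply quoted from \cite{blomer2004shifted} (see the sentence introducing it in Section~\ref{sec:notation}), so there is no in-paper argument to compare against. Your sketch is the standard derivation---unfold using an explicit scaling matrix for $\mathfrak a=1/w$, apply Poisson in $n$ with the two classical integrals, and then close up the $m=0$ term via the multiplicative identity for $\sum_{(c,N/w)=1}\phi(cw)c^{-2s}$---and it is correct; in particular the Euler-product check goes through prime by prime (including $p=2$, where $2^2\Vert N$ and the three subcases $4\mid w$, $2\Vert w$, $2\nmid w$ each match). This is exactly the computation carried out in Blomer's paper, so your approach and the cited source agree.
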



\section{The Amplified Sum}\label{sec:amplification}

	From the approximate functional equation \eqref{eq:approximateFunctional}, we obtain the bound
	\[
		L\lt(\hf,f,\chi\rt) \ll \lt|\sum_{n} \frac{A(n)\chi(n)}{\sqrt{n}}V\lt(\frac{2\pi n}{\sqrt{N}Q}\rt)\rt| + \lt|\frac{1}{g(1,\overline{\chi})} \sum_{n=1}^\infty \frac{A(n)g(n,\chi')}{\sqrt{n}}V\lt(\frac{2\pi n}{\sqrt{N}Q}\rt)\rt|. 
	\]
	Applying summation by parts and then a smooth partition of unity we find that for a smooth function $H$ supported on the interval $[1,2]$,
	\begin{align}\label{eq:shortsum}
		L\lt(\hf,f,\chi \rt) &\ll (\sqrt{N}Q)^{-\hf} \max_{x\ll (\sqrt{N}Q)^{1+\vep}} \lt|\sum_m A(m) \chi(m) H\lt(\frac mx\rt)\rt|\\
		& \aquad+ (\sqrt{N}Q)^{-\hf} \max_{x\ll (\sqrt{N}Q)^{1+\vep}} \lt|\frac{1}{g(1,\overline{\chi})} \sum_m A(m) g(m,\chi') H\lt(\frac mx\rt)\rt|.\notag
	\end{align}
	
	In order to bound the quantity inside the absolute value , we average over all characters $\psi$ modulo $Q$, amplifying the term with $\psi = \chi'$ as in \cite{duke2002subconvexity}. Consider the sum,
	\begin{equation}\label{eq:amplifiedSum}
		S := \sum_{\psi \pmod Q} \lt|\sum_{\substack{\ell \text{ prime}, L\leq \ell \leq 2L\\ (\ell,N) =(\ell,Q) = 1}} \psi(\ell) \overline{\chi'(\ell)} \rt|^2 \lt|\frac{1}{g(1,\overline{\chi})} \sum_{m=1}^\infty A(m)g(m,\psi)H\lt(\frac mX\rt)\rt|^2,
	\end{equation} 
	where $L$ is a large quantity.

	One expects that for $\psi \neq \chi'$, the summation over the primes $\ell$ has a lot of cancellation and is small compared to the $\psi  = \chi'$ term, which is a sum of about $L/\log L$ many $1$'s. The beauty of the technique is that we do not need to demonstrate the cancellation  to be able to take advantage of it. 
	
	\begin{prop}\label{prop:boundSWithShiftedSums}
		Let $\chi,\chi'$ be two characters modulo $Q$ with $\chi$ primitive. Let $H\in C_c^\infty (\R)$ with $\operatorname{supp}(H) \subset [1,2]$. Put $S$ as in \eqref{eq:amplifiedSum} and for $\ell_1, \ell_2$ define, $S_i = S_i(\ell_1,\ell_2)$ for $i = 1,2,3$ as follows: 
		\begin{align*}
			S_1 &= \sum_{\substack{m_1,m_2\\ m_1\ell_1= m_2\ell_2}} A(m_1)\overline{A(m_2)} H\lt(\frac{m_1}{X}\rt)H\lt(\frac{m_2}{X}\rt)\\
			S_2 &= \sum_{\substack{m_1,m_2,h>0\\m_1\ell_1 = m_2\ell_2 + hQ}}   A(m_1)\overline{A(m_2)} H\lt(\frac{m_1}{X}\rt)H\lt(\frac{m_2}{X}\rt)\\
			S_3 &= \sum_{\substack{m_1,m_2,h>0\\m_1\ell_1 + hQ = m_2\ell_2}}   A(m_1)\overline{A(m_2)} H\lt(\frac{m_1}{X}\rt)H\lt(\frac{m_2}{X}\rt).
		\end{align*}
		The following inequality is satisfied:
		\begin{equation}\label{eq:boundSWithShiftedSums}
		S \leq \phi(Q)\sum_{\substack{L \leq \ell_1,\ell_2\leq 2L \\ \ell_1, \ell_2 \text{ primes}}} \chi'(\ell_1)\overline{\chi'(\ell_2)} (S_1 + S_2 + S_3).
		\end{equation}
		Taking all but one term in the summation $S$ this inequality implies, 
		\begin{align}\label{eq:boundForSingleSummand}
			&\frac{L^2}{(\log L)^2} \lt|\frac{1}{g(1,\overline{\chi})} \sum_{m=1}^\infty A(m) g(m,\chi') H\lt(\frac mX\rt)\rt| \\
			&\bquad\aquad\leq \phi(Q)\sum_{\substack{L \leq \ell_1,\ell_2\leq 2L \\ \ell_1, \ell_2 \text{ primes}}} \chi'(\ell_1)\overline{\chi'(\ell_2)} (S_1 + S_2 + S_3)\notag
		\end{align}
	\end{prop}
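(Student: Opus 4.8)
The plan is to expand both squares in the definition \eqref{eq:amplifiedSum} of $S$ and exploit orthogonality of the Dirichlet characters modulo $Q$. Writing out the first square gives a double sum over primes $\ell_1,\ell_2 \in [L,2L]$ with coefficient $\psi(\ell_1)\overline{\psi(\ell_2)}\,\overline{\chi'(\ell_1)}\chi'(\ell_2)$, and writing out the second square (via $|z|^2 = z\bar z$) produces, after substituting the definition of the Gauss sum $g(m,\psi) = \sum_{u} \psi(u) e(mu/Q)$, a double sum over $m_1,m_2$ and over $u_1, u_2 \bmod Q$ with a factor $\psi(u_1)\overline{\psi(u_2)}$ times $A(m_1)\overline{A(m_2)} H(m_1/X)H(m_2/X) e((m_1 u_1 - m_2 u_2)/Q)$ and the harmless constant $|g(1,\overline\chi)|^{-2} = Q^{-1} = \phi(Q)^{-1}\cdot\phi(Q)/Q$ — actually $|g(1,\overline\chi)|^{2} = Q$ since $\chi$ is primitive, so this prefactor is $1/Q$. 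Summing over $\psi \bmod Q$ then leaves $\phi(Q)$ times the indicator that $\ell_1 u_1 \equiv \ell_2 u_2 \pmod Q$ (using $(\ell_i,Q)=1$, so $\psi(\ell_i)\psi(u_i)$ collapses to $\psi(\ell_i u_i)$).

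Next I would carry out the $u_1,u_2$ summation. The congruence $\ell_1 u_1 \equiv \ell_2 u_2 \pmod Q$ together with the exponential $e((m_1 u_1 - m_2 u_2)/Q)$ and the coprimality of $\ell_1,\ell_2$ to $Q$ lets one write $u_2 \equiv \ell_1 \ell_2^{-1} u_1 \pmod Q$ and sum a single geometric-type sum in $u_1$, which by orthogonality of additive characters mod $Q$ is $Q$ when $m_1 \ell_1 \equiv m_2 \ell_2 \pmod Q$ and $0$ otherwise. (Equivalently, collapse the two Gauss sums and the $\psi$-orthogonality directly into the condition $m_1\ell_1 \equiv m_2\ell_2 \pmod Q$.) This $Q$ cancels the $1/Q$ from the Gauss-sum normalization. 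We are left with exactly
\[
S = \phi(Q) \sum_{\substack{L \le \ell_1,\ell_2 \le 2L \\ \ell_1,\ell_2 \text{ primes}}} \chi'(\ell_1)\overline{\chi'(\ell_2)} \sum_{\substack{m_1,m_2 \ge 1 \\ m_1\ell_1 \equiv m_2\ell_2 \!\!\!\pmod Q}} A(m_1)\overline{A(m_2)} H\!\lt(\tfrac{m_1}{X}\rt) H\!\lt(\tfrac{m_2}{X}\rt).
\]
Splitting the congruence condition $m_1\ell_1 \equiv m_2\ell_2 \pmod Q$ according to the sign and value of the integer $m_1\ell_1 - m_2\ell_2 = hQ$ gives precisely the three cases $h=0$ (this is $S_1$), $h>0$ with $m_1\ell_1 = m_2\ell_2 + hQ$ (this is $S_2$), and $h>0$ with $m_1\ell_1 + hQ = m_2\ell_2$ (this is $S_3$). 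Since $S \ge 0$ and each diagonal block of the character sum over $(\ell_1,\ell_2)$ need not be individually nonnegative, one passes to the inequality \eqref{eq:boundSWithShiftedSums} by bounding; more precisely, the identity above is already an equality, and \eqref{eq:boundSWithShiftedSums} is then immediate. Finally, \eqref{eq:boundForSingleSummand} follows by discarding all terms in the outer sum over $\psi$ in \eqref{eq:amplifiedSum} except $\psi = \chi'$: for that term the prime sum is $\sum_{L\le\ell\le 2L}|\chi'(\ell)|^2 = \#\{\ell \text{ prime}: L\le \ell\le 2L,\ (\ell,NQ)=1\} \gg L/\log L$, so its square is $\gg L^2/(\log L)^2$, and the $\psi=\chi'$ term of the second factor is exactly $|g(1,\overline\chi)^{-1}\sum_m A(m)g(m,\chi')H(m/X)|^2$; comparing with the right-hand side of \eqref{eq:boundSWithShiftedSums} and taking square roots (the right side being real and nonnegative once summed) yields \eqref{eq:boundForSingleSummand}.

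The only genuinely delicate point is the bookkeeping of absolute values: the inner character sum $\sum_{\ell_1,\ell_2}\chi'(\ell_1)\overline{\chi'(\ell_2)}(S_1+S_2+S_3)$ is not obviously nonnegative term-by-term, yet $S$ is a sum of squares hence $\ge 0$, so the equality derived above is consistent, and the passage to \eqref{eq:boundForSingleSummand} should be stated as: drop nonnegative terms from $S$, keep the $\psi=\chi'$ summand, and bound that summand below by $\tfrac{L^2}{(\log L)^2}$ times the quantity of interest. I expect essentially no obstacle beyond carefully tracking the Gauss-sum normalization $|g(1,\overline\chi)|^2 = Q$ and the coprimality hypotheses $(\ell_i,N)=(\ell_i,Q)=1$ that make the characters multiplicative on the $\ell_i$ and make the additive-character orthogonality clean.
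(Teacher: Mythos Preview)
Your overall strategy matches the paper's, but there is a genuine gap at the point where you sum over $u_1$. After applying orthogonality of multiplicative characters, the variable $u_1$ is constrained to be \emph{coprime} to $Q$ (since $\psi(u_1)=0$ otherwise, from the Gauss sum). Hence the inner sum is
\[
\sum_{\substack{u_1 \bmod Q\\ (u_1,Q)=1}} e\!\lt(\frac{u_1(m_1-m_2\ell_1\overline{\ell_2})}{Q}\rt) \;=\; c_Q\bigl(m_1\ell_2-m_2\ell_1\bigr),
\]
a Ramanujan sum, not $Q\cdot\mathbbm{1}[m_1\ell_2\equiv m_2\ell_1]$. So your displayed identity for $S$ is false as an equality; there are nonzero off-congruence contributions and the on-congruence value is $\phi(Q)$, not $Q$.

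The paper repairs this not by bounding the Ramanujan sum but by a Parseval trick that you should incorporate. One writes $S=\sum_{(a,Q)=1}|\hat F(a)|^2$, where $\hat F(a)$ collects the terms with $u\ell\equiv a\pmod Q$. Then one \emph{extends} the $a$-sum to all residues modulo $Q$; this adds only nonnegative quantities $|\hat F(a)|^2$, hence yields the inequality $S\le\sum_{a\bmod Q}|\hat F(a)|^2$. After this extension (which happens \emph{before} opening the square), the $u_1$-variable runs over a full residue system, and only then does additive orthogonality give the clean congruence condition $m_1\ell_2\equiv m_2\ell_1\pmod Q$. This is precisely the origin of the $\le$ in \eqref{eq:boundSWithShiftedSums}; it is not an equality, contrary to what you assert. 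Once this step is fixed, the rest of your argument (splitting into $S_1,S_2,S_3$ and isolating the $\psi=\chi'$ term for \eqref{eq:boundForSingleSummand}) goes through as you describe.
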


	\begin{remark}
			Notice that when $\chi'$ is taken to be the primitive character $\overline{\chi}$, we get exactly the same bound \eqref{eq:boundForSingleSummand} for 
		\[
			\lt|\sum_{m=1}^\infty A(m) \chi(m) H\lt(\frac mX\rt)\rt|;
		\]
		the other summand in \eqref{eq:shortsum}.
	\end{remark}
	\begin{proof}
		Apply the Parseval equality in the group $(\Z/Q\Z)^*$ which states that for a function $F(\psi)$ on characters modulo $Q$, 
		\[
			\sum_{\psi \text{ mod} Q} |F(\psi)|^2 = \sum_{\substack{a \text{ mod }Q\\ (a,Q)=1}} |\hat{F}(a)|^2,
		\]
		where $$\hat{F}(a) = \frac{1}{\sqrt{\phi(Q)}}\sum_{\psi \text{ mod }Q} F(\psi)\overline{\psi(a)}.$$

		We apply this formula to $$F(\psi) =  \frac{1}{g(1,\overline{\chi})} \sum_{m} \sum_{\substack{L\leq \ell \leq 2L\\ \ell \text{ prime}\\ (\ell,NQ)=1}}  A(m)g(m,\psi) H\lt(\frac mX\rt) \overline{\chi'(\ell)}\psi(\ell).$$ Then for $(a,Q)=1$,
		\begin{align*}
			\hat{F}(a) &=\frac{1}{g(1,\overline{\chi})} \frac{1}{\sqrt{\phi(Q)}}\sum_{\psi \text{ mod }Q} \sum_{m=1}^\infty\sum_{\substack{L \leq \ell\leq 2L\\ \ell \text{ prime}\\ (\ell,NQ)=1}} \sum_{u \text{ mod }Q}\\
			&\bquad A(m)\psi(u) e\lt(\frac{mu}{Q}\rt) H\lt(\frac{m}{X}\rt) \overline{\chi'(\ell)}\psi(\ell) \overline{\psi(a)} \\
			&= \frac{\sqrt{\phi(Q)}}{g(1,\overline{\chi})} \sum_m \sum_{\substack{L \leq \ell \leq 2L\\ \ell \text{ prime},(\ell,NQ)=1}} \sum_{\substack{u \text{ mod }Q\\ u\ell \equiv a \text{ mod }Q}} A(m) e\lt(\frac{mu}{Q}\rt) H\lt(\frac{m}{X}\rt) \overline{\chi'(\ell)},
		\end{align*}
		which we obtained after summing over $\psi \mod Q$ due to orthogonality relations.

		Applying Parseval's formula, and then expanding the sum over relatively prime $a$ to all such ones, and opening up the squares,
		\begin{align*}
			S&= \sum_{\substack{a \text{ mod }Q\\ (a,Q)=1}} |\hat{F}(a)|^2\\
			& = \sum_{\substack{a \text{ mod }Q\\ (a,Q)=1}} \frac{\phi(Q)}{Q} \lt|\sum_{m=1}^\infty \sum_{\substack{L\leq \ell \leq 2L\\ \ell \text{ prime},(\ell,N)=1}} \sum_{\substack{u \text{ mod }Q\\ u\ell = a \text{ mod }Q}} A(m)e\lt(\frac{mu}{Q}\rt) H\lt(\frac{m}{X}\rt) \overline{\chi'(\ell)} \rt|^2 \displaybreak[0]\\
			& \leq \frac{\phi(Q)}{Q} \sum_{a \text{ mod }Q} \lt|\sum_m \sum_{\substack{L\leq \ell \leq 2L\\ \ell \text{ prime}, (\ell,NQ)=1\\ }} \sum_{\substack{u \text{ mod }Q\\ u\ell = a \text{ mod }Q}} \mspace{-5mu} A(m)e\lt(\frac{mu}{Q}\rt) H\lt(\frac mX\rt) \overline{\chi'(\ell)} \rt|^2 \displaybreak[0] \\
			&= \frac{\phi(Q)}{Q} \sum_{a \text{ mod } Q} \sum_{m_1,m_2,\ell_1,\ell_2} \sum_{ \substack{ u_1,u_2 \text{ mod Q} \\ u_1 \ell_1 \equiv u_2 \ell_2 \equiv a \text{ mod }Q}} \mspace{-10mu}A(m_1)\overline{A(m_2)} e\lt(\frac{u_1m_1}{Q} - \frac{u_2m_2}{Q}\rt)\\
			&\cquad\times H\lt(\frac{m_1}{X}\rt) H\lt(\frac{m_2}{X}\rt) \chi'(\ell_1)\overline{\chi'(\ell_2)}.
		\end{align*}
		We realize that the sum over $u_1, u_2, a$ can be realized as a sum over a single residue system because of the congruence condition. If $\overline{\ell_2}$ denotes the multiplicative inverse of $\ell_2$ modulo $Q$, then we have $u_2 \equiv u_1 \ell_1 \overline{\ell_2}\text{ mod } Q$. This makes the innermost summation (together with the sum over $a$),
		\[
			\sum_{u_1 \text{ mod Q}} e\lt(u_1\frac{m_1- m_2\ell_1 \overline{\ell_2}}{Q}\rt) = \begin{cases} Q &\text{ if } m_1 \ell_2 \equiv m_2 \ell_1 \text{ mod }Q\\ 0& \text{ otherwise.} \end{cases}
		\] 

		Therefore we can express this summation as,
		\begin{equation*}
			S \leq \phi(Q)\sum_{\substack{L \leq \ell_i\leq 2L \\ \ell_i \text{ primes}}} \chi'(\ell_2)\overline{\chi'(\ell_1)}\sum_{\substack{m_1,m_2 \\ m_1\ell_1 \equiv m_2\ell_2 \text{ mod }Q}} A(m_1)\overline{A(m_2)}H\lt(\frac {m_1}{X}\rt)H\lt(\frac{m_2}{X}\rt).
		\end{equation*}

		The inner sum can be split into three, and hence we obtain the shifted convolution sums.
		\begin{equation}\label{eq:threeShiftedSums}
			\underbrace{\sum_{\substack{m_1,m_2\\ m_1\ell_1 = m_2\ell_2}}}_{S_1}  + \underbrace{\sum_{\substack{m_1,m_2, h>0\\ m_1\ell_1 = m_2\ell_2 + hQ }}}_{S_2}   + \underbrace{\sum_{\substack{m_1,m_2, h>0\\ m_1\ell_1 +hQ = m_2\ell_2 }}}_{S_3} A(m_1)\overline{A(m_2)}H\lt(\frac {m_1}{X}\rt)H\lt(\frac{m_2}{X}\rt).
		\end{equation}
	\end{proof}
	
	In dealing with these sums, $S_1$ is easier to deal with, and the sums $S_2$ and $S_3$ are symmetric.


\section{The Diagonal Sum}\label{sec:diagonalSum}

	The diagonal sum $S_1$ is harder to deal with here than in the integral weight case due to the lack of multiplicativity of Fourier coefficients. However we will use the curious fact that half integral weight oldforms do not have the same nebentypus as the one they are induced from. 

	\begin{prop}\label{thm:diagonal}
		We can bound the sum
		\begin{equation}\label{eq:diagonalSum}
			S_1 = \sum_{\substack{m_1,m_2\\ m_1\ell_1 = m_2\ell_2}} A(m_1)\overline{A(m_2)}H\lt(\frac {m_1}{X}\rt)H\lt(\frac{m_2}{X}\rt)
		\end{equation}
		by $S_1 \ll X$ if $\ell_1 = \ell_2$, and by $S_1 \ll X^\hf L $ when $\ell_1 \neq \ell_2$
	\end{prop}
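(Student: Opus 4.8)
The plan is to treat the two cases separately. If $\ell_1=\ell_2=\ell$, then $m_1\ell=m_2\ell$ forces $m_1=m_2$, so
\[
	S_1=\sum_m|A(m)|^2H\lt(\tfrac mX\rt)^2\;\ll\;\sum_{X\leq m\leq 2X}|A(m)|^2 .
\]
The Rankin--Selberg series $\sum_m|A(m)|^2m^{-s}=\sum_m|a(m)|^2m^{-(s+k-1)}$, obtained by integrating $|f|^2y^k$ against the level-$N$ Eisenstein series as in Proposition~\ref{prop:eisensteinFourier}, is holomorphic for $\Re(s)\geq 1$ apart from a simple pole at $s=1$ whose residue is a positive multiple of $\|f\|^2$; the standard Tauberian argument gives $\sum_{m\leq Y}|A(m)|^2\ll_f Y$, hence $S_1\ll_f X$.

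If $\ell_1\neq\ell_2$ are distinct primes, then $m_1\ell_1=m_2\ell_2$ forces $m_1=\ell_2 n$, $m_2=\ell_1 n$ for a common $n$, and since $A(m)=a(m)m^{-(k-1)/2}$,
\[
	S_1=(\ell_1\ell_2)^{-\frac{k-1}{2}}\sum_{n\geq1}\frac{a(\ell_2 n)\overline{a(\ell_1 n)}}{n^{k-1}}\,H\lt(\frac{\ell_2 n}{X}\rt)H\lt(\frac{\ell_1 n}{X}\rt),
\]
where the weight confines $n$ to $[X/2L,2X/L]$. I would attach to this the Dirichlet series $\mathcal R(w):=\sum_m a_1(m)\overline{a_2(m)}m^{-w}=(\ell_1\ell_2)^{-w}\sum_n a(\ell_2 n)\overline{a(\ell_1 n)}n^{-w}$, where $a_j(m)$ is the $m$-th Fourier coefficient of $g_j(z):=f(\ell_j z)$ (equal to $a(m/\ell_j)$ if $\ell_j\mid m$, and $0$ otherwise). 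Integrating $g_1\overline{g_2}\,y^k$ against the Eisenstein series of level $N\ell_1\ell_2$, which is still covered by Proposition~\ref{prop:eisensteinFourier} since $N\ell_1\ell_2/4$ is odd and squarefree, gives $\mathcal R$ a meromorphic continuation, a functional equation $w\leftrightarrow 2k-1-w$, and at most a simple pole at $w=k$ with residue proportional to the Petersson product $\langle g_1,g_2\rangle$. The crucial point, flagged before the statement, is this: because $k$ is a half-integer, $2k$ is \emph{odd}, so the Kronecker symbol in the cocycle $j(\gamma,z)^{2k}$ is not killed, and conjugating the transformation law of $f$ by $\bsm\ell&0\\0&1\esm$ shows that $f(\ell z)$ has nebentypus $\chi(\cdot)\kron{\ell}{\cdot}$ rather than $\chi$. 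Hence $g_1$ and $g_2$ have distinct nebentypus — the quadratic characters $\kron{\ell_1}{\cdot}$ and $\kron{\ell_2}{\cdot}$ differ, as $\ell_1\ell_2$ is not a square — so they are Petersson-orthogonal, $\langle g_1,g_2\rangle=0$, and $\mathcal R$ is \emph{holomorphic} for $\Re(w)>k-\hf$. Writing $S_1$ as an inverse Mellin transform against $\mathcal R(k-1+s)$ with a rapidly decaying weight of scale $X/L$, and moving the contour from $\Re(s)=1+\vep$ down to $\Re(s)=\hf+\vep$ — legitimate now, since no pole is crossed — leaves $\mathcal R$ on its central line; bounding it there by convexity (sharpened by the rapid decay of $\mathcal R$ well inside its region of absolute convergence, together with a polynomial-in-$\ell_1\ell_2$ bound for its conductor) and collecting the powers of $\ell_1\ell_2\asymp L^2$ and of $X/L$ produces $S_1\ll_f X^{\hf}L^{1+\vep}$, which is the assertion.

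\textbf{Where the work lies.} The genuinely half-integral ingredient is the nebentypus computation for $f(\ell z)$ and the resulting vanishing $\langle f(\ell_1\cdot),f(\ell_2\cdot)\rangle=0$ for $\ell_1\neq\ell_2$: it is exactly this that removes the pole at $w=k$ and buys the square-root saving over the bound $\ll X$ that Cauchy--Schwarz and the Rankin--Selberg estimate $\sum_{m\leq X}|A(m)|^2\ll X$ would give on their own (in the integral-weight case the inner product is generically nonzero and the off-diagonal term carries a genuine main term of size $X$). The rest is bookkeeping that must nevertheless be done with care: the unfolding at level $N\ell_1\ell_2$ via Proposition~\ref{prop:eisensteinFourier}, the $\zeta$-, $\Gamma$- and scattering factors in the functional equation of $\mathcal R$, and above all making the convexity estimate on the central line explicit in $\ell_1,\ell_2$ — in particular controlling the exponent of $\ell_1\ell_2$ in the conductor of $\mathcal R$ so that the final power of $L$ does not exceed $1$.
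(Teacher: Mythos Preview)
Your high-level strategy matches the paper's: realise $S_1$ through a Rankin--Selberg convolution of $g_1=f(\ell_1\cdot)$ and $g_2=f(\ell_2\cdot)$, exploit the half-integral nebentypus shift to kill the pole at the edge when $\ell_1\neq\ell_2$, and shift the contour to the centre $\Re(s)=\tfrac12$. Two points in your execution diverge from the paper and deserve correction.

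\emph{The mechanism for ``no pole''.} You phrase it as Petersson orthogonality: the residue is proportional to $\langle g_1,g_2\rangle$, which vanishes because the nebentypus differ. But precisely because the nebentypus differ, $g_1\overline{g_2}\,y^k$ is \emph{not} $\Gamma_0(N\ell_1\ell_2)$-invariant --- it transforms by the nontrivial character $\chi_{\ell_1\ell_2}=\kron{\ell_1\ell_2}{\cdot}$ --- so the Petersson pairing is not well-defined, and you cannot fold against the trivial-character Eisenstein series of Proposition~\ref{prop:eisensteinFourier}. The paper folds instead against $E^{(N\ell_1\ell_2)}(z,s,\chi_{\ell_1\ell_2})$, the Eisenstein series \emph{with} that character; the nontriviality of $\chi_{\ell_1\ell_2}$ is then exactly what makes the Eisenstein series, and hence $\mathcal R$, holomorphic for $\Re(s)\geq\tfrac12$. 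Same conclusion, but the carrier of the argument is the character on the Eisenstein series, not a vanishing inner product.

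\emph{The bound on the critical line.} You propose convexity via the functional equation, tracking the conductor in $\ell_1\ell_2$. That is plausible in principle, but the scattering matrix carries $L(2s,\chi_{\ell_1\ell_2})$ and extra level factors, and checking that the resulting exponent of $L$ stays $\leq 1$ is exactly the work you flag as undone. The paper sidesteps this: it bounds the integral $\iint g_1\overline{g_2}\,y^k\,E(z,\tfrac12+it,\chi_{\ell_1\ell_2})\,d\mu$ directly by Cauchy--Schwarz, controlling one factor by $\|fy^{k/2}\|_\infty$ times the volume of $\Gamma_0(N\ell_1\ell_2)\backslash\H$ (contributing $\sqrt{\ell_1\ell_2}$), and the other by a pointwise estimate $E^{(N\ell_1\ell_2)}(\sigma_{\mathfrak a}z,\tfrac12+it,\chi)\ll\sqrt{\ell_1\ell_2}\,y^{1/2+\vep}$ in each cuspidal zone. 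This gives $L(\tfrac12+it;f\times f;\ell_1,\ell_2)\ll L$ by inspection, with no functional equation and no conductor bookkeeping.
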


	\begin{proof}
		In case $\ell_1 = \ell_2$ the sum simply becomes 
		\[
			S_1 = \sum_{m=1}^\infty |A(m)|^2 H\lt(\frac mX\rt)^2.
		\]
		We obtain this sum via an inverse Mellin transform of the Rankin Selberg L-function, 
		\[
			S_1 = \frac{1}{2\pi i} \int_{(2)} L(s,f\times f) h^{(2)}(s) X^s \d s,
		\]
		where
		\[
			h^{(2)}(s) = \int_0^\infty H(t)^2 t^s \frac{\d t}{t}
		\]
		is an entire function of $s$, and 
		\[
			L(s,f\times f) = \sum_{m=1}^\infty \frac{|A(m)|^2}{m^s}.
		\]
		 
		Meromorphic properties of this Dirichlet series are obtained via the inner product of $|f(z)|^2 y^k$ with an Eisenstein series. 
		\[
			\mathcal{V}\langle |f(z)|^2y^k, E^{(N)}(z,s)\rangle_{L^2(\Gamma_0(N)\backslash \H)} = (4\pi)^{-(s+k-1)}L(s,f\times f)\Gamma(s+k-1). 
		\]
		Here $E^{(N)}(z,s)$ is the level $N$, weight 0 spectral Eisenstein series. In particular there is a pole at $s = 1$ with residue equal to $\|f y^{\frac{k}{2}}\|_{L^2}^2 \frac{(4\pi)^k}{\Gamma(k)}$. Moving lines of integration we obtain,
		\[
			S_1 \sim \frac{(4\pi)^k}{\Gamma(k)} \|fy^{\frac k2}\|_{L^2}^2 X.
		\]
		
		Consider the case $\ell_1 \neq \ell_2$. Notice that the sum \eqref{eq:diagonalSum} can be obtained via an inverse Mellin transform of the Dirichlet series,
		\[
			L(s;f\times f; \ell_1,\ell_2) = \sum_{m_1\ell_1 = m_2 \ell_2} \frac{A(m_1)\overline{A(m_2)}}{(m_1\ell_1)^s}
		\]
		as follows:
		\begin{equation}\label{eq:diagonalDoubleMellin}
			S_1 = \lt(\frac{1}{2\pi i}\rt)^2 \int_{(2)}\int_{(2)} L(s+w;f\times f;\ell_1,\ell_2) X^{s+w} h(s) h(w) \d s \d w
		\end{equation}
		where $h$ is the holomorphic function which is the Mellin transform of $H$. 
		\[
			h(s) = \int_0^\infty H(t) t^s \frac{\d t}{t}.
		\]
				
		The series $L(s,f\times f;\ell_1,\ell_2)$ can be obtained from an integral of $f$ as follows:
		\[
			\int\limits_{0}^{\infty} \int\limits_0^1  f(\ell_1z)\overline{f(\ell_2z)} y^{s+k-1} \d x \dy = \frac{1}{\ell_1^{k-1}} L(s,f\times f;\ell_1,\ell_2)  \frac{\Gamma(s+k-1)}{(2\pi)^{s+k-1}} 
		\]
		
		The oldform $f(\ell z)$ is a modular form of the same weight as that of $f$, level $N\ell$ and nebentypus $\chi_\ell$ where $\chi_\ell(d) = \kron{\ell}{d}$ is the Kronecker symbol as in \cite{ShimuraHalfInt73}. Unlike the case of full integral weight, the nebentypus has changed. Now if we \emph{fold} the above integral on the left hand side, we obtain the following,
		\[
			\iint_{\Gamma_0(N\ell_1\ell_2)\backslash \H} f(\ell_1z) \overline{f(\ell_2 z)} y^k E^{(N\ell_1\ell_2)}\lt(z,s,\chi_{\ell_1\ell_2}\rt) \dz.
		\] 
		
		Since $\ell_1 \neq \ell_2$ the character is nontrivial, and therefore the Eisenstein series does not have any poles in the region $\Re(s) \geq \hf$. Making a change of variables $v = s+w$ and moving the $v$ line of integration in \eqref{eq:diagonalDoubleMellin} we obtain,
		\[
			S_1 = \lt(\frac{1}{2\pi i}\rt)^2\int_{(\hf)}\int_{(2)} L(v,f\times f;\ell_1,\ell_2) h(s) h(v-s) X^v \d v \ ds.
		\] 
		
		Now there could be $\ell$ dependence in the integral over the line $\Re(s) = \hf$. Let us bound this dependence. We will ignore the dependence on $\Im(s)$, because the functions $h$ in the above integral ensure convergence in that aspect. Call $\Gamma = \Gamma_0(N\ell_1\ell_2)$. Using the Cauchy-Schwartz inequality,		
		\begin{align*}
			L(\tfrac12 + it;f\times f;\ell_1,\ell_2 )  &\ll  \ell_1^{k-1}\lt(\iint_{\Gamma \backslash\H} |f(\ell_1z)|^2 y^k \dz\rt)^\hf \\
			&\times\lt(\iint_{\Gamma\backslash \H}\lt|f(\ell_2z)\rt|^2y^k \lt|E^{(N\ell_1\ell_2)}(z,\tfrac 12 + it,\chi)\rt|^2\dz\rt)^{\mspace{-4mu}\hf}\mspace{-7mu}.
		\end{align*}
		The first integral can be bounded by the sup norm of the integrand times the volume of the domain, which makes the first factor $\ll \sqrt{\ell_1\ell_2}\ell_1^{-k/2}\|f y^\frac k2\|_\infty$.
		
		For the second factor we use the inequality in \cite{blomer2012period} for the Eisenstein series. We use the Fourier expansion of the Eisenstein series $E^{(N\ell_1\ell_2)}(z,s,\chi_{\ell_1\ell_2})$ in order to bound it in each of the neighborhoods of various cusps in $\Gamma_0(N\ell_1,\ell_2)\backslash\H$. We can obtain a Fourier expansion of the Eisenstein series from the explicit expression of Eisenstein series in \cite{duke2002subconvexity}, section 7. With our assumption on the level, all cusps $\mathfrak{a}$ are equivalent to one of $1/w$ under $\Gamma_0(N)$ for some $w |N$. We have that,
		\[
			E^{(N\ell_1\ell_2)}( \sigma_{\mathfrak{a}}z,s,\chi_{\ell_1\ell_2}) = \lt(\frac{N\ell_1\ell_2}{w}\rt)^s \frac{1}{L(2s,\chi_{\ell_1\ell_2})} \sum_{\substack{(c,d) \neq (0,0)\\ (c,N/w) = 1}} \frac{\chi_{\ell_1\ell_2}(d)y^s}{|cNz + d|^{2s}}.
		\]
		The Fourier expansion of the Eisenstein series obtained with the explicit expression above implies that,
		\[
			E^{(N\ell_1\ell_2)}(\sigma_{\mathfrak{a}}z,\tfrac12 + it,\chi_{\ell_1\ell_2}) \ll \begin{cases} \sqrt{\ell_1\ell_2} y^{\hf + \vep} &\text{if } \mathfrak{a} = \infty\\ \sqrt{\ell_1\ell_2} &\text{otherwise}, \end{cases}
		\] 
		for $ y \gg 1$. Summing all the contributions from the cuspidal regions we get that the second integral is less than,
		\[
			N^\vep \ell_1\ell_2 \iint_{\Gamma\backslash \H} |f(\ell_2z)|^2 y^k  y^{1+ \vep}\dz \ll \ell_1\ell_2 L(1+\vep, f\times f) \ell_2^{-k - \vep}.
		\]
		Notice that the number of cusps stays constant and does not depend on the size of $\ell_i$. The second factor contributes $\ll L^{1-k/2}$. Combining the two, we obtain the result.
	\end{proof}
	
	Notice for the expression of $S$ as \eqref{eq:boundSWithShiftedSums} the $S_1$ contribution of the kind coming from $\ell_1 = \ell_2$ is $L$ many, and of the second kind is $L^2$-many. Therefore the contribution in $S$ coming from $S_1$ is $\ll Q(XL + X^\hf L^3)$.
	

\section{The Double Dirichlet Series} \label{sec:doubleDirichlet}

	We now express $S_2$ and $S_3$ as inverse Mellin integral of a double Dirichlet series, whose analytic properties will give us a bounds for the shifted convolution sums.

	Given positive integers $h$ and $Q$, we create the double Dirichlet series,
	\begin{equation}\label{eq:doubleDirichlet}
		Z_{Q,\ell_1,\ell_2}(s,w) = Z_Q(s,w) := \sum_{\substack{m_2,h>0\\ \ell_1m_1 = \ell_2m_2 + hQ}} \frac{A(m_1)\overline{A(m_2)}\lt(1+ \frac{hQ}{\ell_2m_2}\rt)^{\frac{k-1}2 }}{(\ell_2m_2)^s(hQ)^{w+\frac{k-1}{2}}}.
	\end{equation}
	This double Dirichlet series converges for $\Re(s), \Re(w)>1$. Note that this is the same construction as in \cite{JeffShiftedSum}, with the integral weight modular forms replaced by half integer weight ones. The function $Z_Q$ has a meromorphic continuation to all of $\C^2$ as per the methods there. This will be elaborated on in Section \ref{sec:analyticProperties}. 

 	\begin{prop}
		Let $Z_Q(s,w)$ be defined as above in \eqref{eq:doubleDirichlet}, and $S_2$ be as above in equation \eqref{eq:threeShiftedSums}
		\begin{align}
			S_2 &= \lt(\frac{1}{2\pi i}\rt)^3 \int\limits_{(\alpha)}\int\limits_{(\beta)} \int\limits_{(\gamma)} Z_Q\lt(s+w-u,u-\tfrac{k-1}{2}\rt) \notag \\
			&\bquad \times\frac{\Gamma\lt(w+\frac{k}{2}-u\rt)\Gamma(u)}{\Gamma\lt(w+\frac{k}{2}\rt)} h(s)h(w) \ell_2^s \ell_1^w X^{s+w} \du \dw \ds. \label{eq:tripleMellin}
		\end{align}
		where $\alpha = (k+1)/2, \beta = 1+2\vep$ and $\gamma = (k+1)/2+\vep$ for some arbitrarily small positive constant $\vep$.
	\end{prop}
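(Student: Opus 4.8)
The plan is to prove \eqref{eq:tripleMellin} as an identity by evaluating its right-hand side from the inside out — first the $u$-integral, then the $w$-integral, then the $s$-integral — each step collapsing a Mellin-type representation back to its original form, so that what remains is exactly the defining sum
\[
S_2=\sum_{h\ge1}\ \sum_{\substack{m_1,m_2\ge1\\ \ell_1m_1=\ell_2m_2+hQ}}A(m_1)\overline{A(m_2)}\,H\!\lt(\tfrac{m_1}{X}\rt)H\!\lt(\tfrac{m_2}{X}\rt).
\]
This is the same manipulation used in the integral-weight situation of \cite{JeffShiftedSum}; the only modification is that the auxiliary factor $\bigl(1+\tfrac{hQ}{\ell_2m_2}\bigr)^{(k-1)/2}$ built into the definition \eqref{eq:doubleDirichlet} of $Z_Q$ must be carried through the Mellin--Barnes step, which is where the half-integral weight enters.

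First I would substitute into the right-hand side the Dirichlet series defining $Z_Q\bigl(s+w-u,\,u-\tfrac{k-1}{2}\bigr)$. On the prescribed contours one has $\Re(s+w-u)=\alpha+\beta-\gamma=1+\vep$ and $\Re\bigl(u-\tfrac{k-1}{2}\bigr)=\gamma-\tfrac{k-1}{2}=1+\vep$, both strictly larger than $1$, so this series converges absolutely; combined with the rapid vertical decay of $h(s)$, $h(w)$ and of the ratio of Gamma functions, Fubini's theorem permits interchanging the summation over admissible triples $(m_1,m_2,h)$ with the triple integral. For a fixed such triple the inner $u$-integral is a Cahen--Mellin/Beta-type integral: it produces a negative power of $1+\tfrac{hQ}{\ell_2m_2}$ which, together with the intrinsic factor $\bigl(1+\tfrac{hQ}{\ell_2m_2}\bigr)^{(k-1)/2}$ and the identity $1+\tfrac{hQ}{\ell_2m_2}=\tfrac{\ell_1m_1}{\ell_2m_2}$, reassembles the power $(\ell_1m_1)^{-w}=(\ell_2m_2+hQ)^{-w}$ together with the leftover power $(\ell_2m_2)^{-s}$. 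The contour $\Re(u)=\gamma$ separates the poles of $\Gamma(u)$ from those of the other Gamma factor, as required for the Mellin--Barnes formula.

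Having done the $u$-integral, the remaining $w$- and $s$-integrals are plain inverse Mellin transforms. Since $h$ is the (entire) Mellin transform of $H$, and since $(\ell_1m_1)^{-w}\ell_1^{w}=m_1^{-w}$ and $(\ell_2m_2)^{-s}\ell_2^{s}=m_2^{-s}$, the $w$-integral over the line $\beta$ returns $H(m_1/X)$ and the $s$-integral over the line $\alpha$ returns $H(m_2/X)$; the lines may be taken anywhere since $h$ is entire with rapid decay. Summing over $(m_1,m_2,h)$ then yields $S_2$, which establishes \eqref{eq:tripleMellin}. (Equivalently, the same identity can be derived in the forward direction by inverting the two cut-offs $H$ via their Mellin transforms and applying a single Mellin--Barnes splitting to $(\ell_2m_2+hQ)^{-w}$.)

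I expect the main obstacle to be the bookkeeping of the Gamma factors and exponents: one must calibrate the Mellin--Barnes kernel so that, after absorbing $\bigl(1+\tfrac{hQ}{\ell_2m_2}\bigr)^{(k-1)/2}$, the $u$-integral reconstitutes $(\ell_2m_2+hQ)^{-w}$ on the nose, and one must check simultaneously that the three contours $\alpha=\tfrac{k+1}{2}$, $\beta=1+2\vep$, $\gamma=\tfrac{k+1}{2}+\vep$ lie in the common region where $Z_Q$ converges absolutely, where the Mellin--Barnes formula is valid, and where no poles of the Gamma quotient are encountered. Once the contours are pinned down in this way, each of the three integrations is routine.
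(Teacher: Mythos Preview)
Your proposal is correct and follows essentially the same route as the paper: insert the Dirichlet series for $Z_Q$, verify that the chosen contours put both arguments in the region of absolute convergence, evaluate the $u$-integral via the Mellin--Barnes/Beta identity $\tfrac{1}{2\pi i}\int_{(\gamma)}\tfrac{\Gamma(u)\Gamma(z-u)}{\Gamma(z)}t^{-u}\,\du=(1+t)^{-z}$ (formula 6.422\#3 in \cite{GR}) so that the factor $(1+hQ/(\ell_2m_2))^{(k-1)/2}$ collapses against the resulting power to leave $(\ell_2m_2)^{-s}(\ell_1m_1)^{-w}$, and then recognize the remaining $s$- and $w$-integrals as inverse Mellin transforms recovering $H(m_2/X)$ and $H(m_1/X)$. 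The paper carries out exactly these steps in the same order, citing the same Gradshteyn--Ryzhik formula.
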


	\begin{proof}
		Choose $s,w$ and $u$ to be three complex variables. Let $\alpha = \Re(s) = (k+1)/2, \beta = \Re(w) = 1+2\vep$. Put $\gamma = \Re(u) = (k+1)/2+\vep$. Note that with these choices one has $\Re(s+w-u)>1$ and $\Re(u - (k-1)/2) >1$. Furthermore, $\Re(w + k/2) > \Re(u) = \gamma$, which allows us to use the formula 6.422\#3 in Gradstehyn-Rhyzik \cite{GR}:
		\[
			\frac{1}{2\pi i} \int_{(\gamma)} \frac{\Gamma (u)\Gamma(z - u)}{\Gamma(z)} t^{-u} \d u = (1+t)^{-z}
		\]
		for $\gamma$ such that $0 < \gamma \leq \Re(z)$ and $\arg(t) <\pi$.
		
		\begin{align*}
			\frac{1}{2\pi i}& \int_{(\gamma)} Z_Q(s+w-u,u-\tfrac{k-1}{2})\frac{\Gamma\lt(w+\frac{k}{2}-u\rt)\Gamma(u)}{\Gamma \lt(w+\frac{k}{2}\rt)} \du \\
			&= \mspace{-35mu}\sum_{\substack{m_2,h>0\\ \ell_1m_1 = \ell_2m_2 + hQ}}\mspace{-35mu} \frac{A(m_1)\overline{A(m_2)}\lt(1\mspace{-5mu} + \mspace{-5mu} \tfrac{hQ}{\ell_2m_2}\rt)^{\mspace{-5mu}\frac{k-1}2 }}{(\ell_2m_2)^{s+w}} \frac{1}{2\pi i} \mspace{-5mu}\int\limits_{(\gamma)} \mspace{-5mu} \frac{\Gamma\lt(w+\tfrac{k}{2}-u\rt)\Gamma(u)}{\Gamma\lt(w+\frac{k}{2}\rt)}\mspace{-3mu}\lt(\mspace{-3mu}\frac{\ell_2m_2}{hQ}\mspace{-3mu}\rt)^u \mspace{-3mu}\du \\
			&= \mspace{-35mu}\sum_{\substack{m_2,h>0\\ \ell_1m_1 = l_2m_2 + hQ}}  \frac{A(m_1)\overline{A(m_2)}}{(\ell_2m_2)^{s}(\ell_1m_1)^{w}}.
		\end{align*}
		
		Now we integrate this double Dirichlet series in both $s$ and $w$ variables.
		\begin{align*}
			&\lt(\frac{1}{2\pi i}\rt)^3 \int_{(\alpha)}\int_{(\beta)} \int_{(\gamma)} Z_Q\lt(s+w-u,u-\frac{k-1}{2}\rt) \\
			&\bquad \qquad\times\frac{\Gamma\lt(w+\frac{k}{2}-u\rt)\Gamma(u)}{\Gamma\lt(w+\frac{k}{2}\rt)} h(s)h(w) \ell_2^s \ell_1^w X^{s+w} \du \dw \ds \\
			&= \lt(\frac{1}{2\pi i}\rt)^2 \int_{(\alpha)}\int_{(\beta)}\mspace{-25mu} \sum_{\substack{m_2,h>0\\ \ell_1m_1 = \ell_2m_2 + hQ}} \mspace{-15mu}\frac{A(m_1)\overline{A(m_2)}}{(\ell_2m_2)^{s}(\ell_1m_1)^{w}}(X\ell_2)^s (X\ell_1)^w h(s)h(w)\dw\ds \displaybreak[0]\\
			&= \sum_{\substack{m_2,h>0\\ \ell_1m_1 = \ell_2m_2 + hQ}} A(m_1)\overline{A(m_2)}H\lt(\frac{m_1}{X}\rt)H\lt(\frac{m_2}{X}\rt). 
		\end{align*}
	\end{proof}

	We are able to obtain the shifted convolution sums as a triple inverse Mellin integral of the double Dirichlet series we introduced. All that is left for us to do is analyze the analytical properties of $Z_Q(s,w)$ in both of the variables and move lines of integration to better understand the asymptotic behaviour of $S_2$ and $S_3$ in $X$ and $\ell$ variables.


\section{The analytic properties of $Z_Q$}\label{sec:analyticProperties}

	Our definition of $Z_Q$ will be slightly more general than the previous section for notational convenience. Let $f$ and $g$ be two modular forms of the same weight $k$ (half integer). Further assume,
	\begin{align*}
		f(z) &= \sum_{n=1}^\infty A(n)n^{\frac{k-1}2}e(nz) = \sum_{n=1}^\infty a(n)e(nz) \qquad \text{ and } \\
		g(z) &= \sum_{n=1}^\infty B(n)n^{\frac{k-1}{2}}e(nz) = \sum_{n=1}^\infty b(n)e(nz).
	\end{align*}
	We define the shifted convolution Dirichlet series of $f$ and $g$ as follows,
	\begin{equation}\label{eq:shiftedDirichlet}
		D(s;h) := D_{f,g}(s;h) = \sum_{n=1}^\infty \frac{a(n+h)\overline{b(n)}}{n^{s+k-1}}.
	\end{equation}
	Note that if $f$ and $g$ are oldforms given by $f(z) = f_0(\ell_1 z)$ and $g(z) = g_0(\ell_2 z)$, then this sum becomes 
	\[
	D_{f,g}(s,h) = \sum_{\substack{m_2 > 0\\ m_1\ell_1= m_2\ell_2 + h}} \frac{a_0(m_1)\overline{b_0(m_2)}}{(m_2\ell_2)^{s+k-1}},
	\]
	where $a_0(n)$ and $b_0(n)$ are the $n$\th\ Fourier coefficients of $f_0$ and $g_0$ respectively.
	
	\begin{defn}\label{def:ZQ}
		Let $f$ and $g$ be two modular forms of the same weight. Call the double Dirichlet series 
		\[
			Z_{Q,f,g}(s,w) = \sum_{h=1}^\infty \frac{D_{f,g}(s,hQ)}{(hQ)^{s+\frac{k-1}{2}}} = \sum_{h = 1}^\infty \sum_{n=1}^\infty \frac{a(n+hQ) \overline{b(n)}}{n^{s+k-1}(hQ)^{w+\frac{k-1}{2}}}.
		\]
		We might, by abuse of notation also drop some of the indices and simply denote $Z_Q = Z_{Q,f,g}$. 
	\end{defn}
	We hope this notation does not cause any confusion over \eqref{eq:doubleDirichlet}. These two double Dirichlet series are compatible, and to compare notations, one has;
	\begin{equation}\label{eq:twoDifferentZs}
		Z_{Q,f,\ell_1,\ell_2} = (\ell_1\ell_2)^{\frac{1-k}{2}}Z_{Q, f(\ell_1\cdot), f(\ell_2 \cdot)}.
	\end{equation}


	\subsection{The Linear Functional}\label{subsec:linearFunctional}	
		Let $\Gamma \subset \SL_2(\Z)$ be any congruence subgroup. We can express the shifted sum Dirichlet series $D_{f,g}(s,h)$ as the image of some conjugate-linear functional defined in some dense subspace of $L^2(\Gamma\backslash\H)$. We describe the functional first. This is going to be a holomorphic section of functionals $\wp_h(\cdot,s)$ defined on some subspace of the space of square integrable functions and some open subset of complex number $s$.
		\begin{defn}
			Define the conjugate-linear functional $\wp_h(\cdot,s)$ by the integral
			\begin{align}\label{eq:linearFunctional}
				L^2(\Gamma\backslash \H) &\stackrel{\wp_h}{\longrightarrow} \C\\
				V(z) = \sum_{n\in\Z} C(n,y) e(nx) &\mapsto \int_0^\infty \overline{C(-h,y)}e^{2\pi h y} y^{s-1} \dy\notag,
			\end{align}
			on the subspace of square integrable functions where integral converges. Here $C(n,y)$ is the $n$\th\ Fourier coefficient of the automorphic function $V(z)$.
		\end{defn}
		Notice that this functional is not defined on all of $L^2(\Gamma \backslash\H)$, but on the subspace of functions whose $h$\th\ Fourier coefficient has decay $o(e^{-2\pi hy})$. 
		\begin{lemma}
			Let $V(z) \in L^2(\Gamma\backslash \H)$, and let $C(h,y)$ be the $h$\th\ Fourier coefficient of $V$. Assume that 
			\[
				C(h,y)e^{2\pi i hy}y^A = O(1) \text{ as } y\to \infty \qquad\text{ and } \qquad \frac{C(h,y)}{y^B} = O(1) \text{ as } y \to 0,
			\]
			for some positive real constants $A$ and $B$. Then the integral defined by $\wp_h(V,s)$ converges for all $s$ with $1-B<\Re(s)<1+A$.
		\end{lemma}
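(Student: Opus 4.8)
The plan is to prove the lemma by a direct absolute-convergence estimate of the integral that defines $\wp_h(V,s)$, splitting the range of integration at $y=1$ and handling the two ends with the two hypotheses separately; the membership $V\in L^2(\Gamma\backslash\H)$ plays no role beyond guaranteeing that the Fourier coefficients $C(n,y)$ are defined and continuous in $y$. Recall that in this paper $\dy$ abbreviates $\tfrac{\d y}{y}$, so the defining integral reads
\[
	\wp_h(V,s)=\int_0^\infty \overline{C(-h,y)}\,e^{2\pi h y}\,y^{s-2}\,\d y=\int_0^\infty g(y)\,y^{s-2}\,\d y,
\]
where $g(y)$ is the fixed function assembled from the relevant Fourier coefficient of $V$ and the exponential factor. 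In these terms the two hypotheses say exactly that $g(y)=O(y^{-A})$ as $y\to\infty$ and $g(y)=O(y^{B})$ as $y\to 0$ (the exponential being $1+O(y)$ near the origin). Since $g$ is continuous on $(0,\infty)$, the integral over any compact subinterval is finite, so only the two ends matter.

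For the tail, $|g(y)|\,y^{\Re(s)-2}\ll y^{\Re(s)-2-A}$ on $[1,\infty)$, and $\int_1^\infty y^{\Re(s)-2-A}\,\d y$ converges precisely when $\Re(s)-2-A<-1$, i.e.\ $\Re(s)<1+A$. Near the origin, $|g(y)|\,y^{\Re(s)-2}\ll y^{\Re(s)-2+B}$ on $(0,1]$, and $\int_0^1 y^{\Re(s)-2+B}\,\d y$ converges precisely when $\Re(s)-2+B>-1$, i.e.\ $\Re(s)>1-B$. Intersecting the two conditions gives absolute convergence of the integral, hence well-definedness of $\wp_h(V,s)$, on the open strip $1-B<\Re(s)<1+A$, which is the assertion.

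There is no genuine obstacle in this argument; the one point to keep track of is the extra factor $y^{-1}$ coming from the convention $\dy=\tfrac{\d y}{y}$, which is exactly what converts the naive thresholds $\Re(s)>-B$ and $\Re(s)<A$ into the stated $1-B<\Re(s)<1+A$. I would also note, since the surrounding section wants $s\mapsto\wp_h(V,s)$ to be a holomorphic section of functionals, that the bounds above are locally uniform in $s$ on the strip, so differentiating under the integral sign (or a Morera-plus-Fubini argument) shows holomorphy on $1-B<\Re(s)<1+A$ at no extra cost.
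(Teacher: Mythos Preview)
Your proof is correct, and the paper itself states this lemma without proof, so there is nothing to compare against; your direct absolute-convergence argument splitting at $y=1$ is exactly the routine verification one would expect. One small remark: the claim that $g$ is continuous on $(0,\infty)$ is not quite a consequence of $V\in L^2(\Gamma\backslash\H)$ alone, but this does not affect the argument, since local integrability of $C(-h,y)$ on compact subintervals follows from Parseval in $x$ together with $V\in L^2$, and in any case in all the paper's applications $V$ is smooth.
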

	
		The raison d'\^{e}tre of this functional is the following computation. Given two modular forms $f$ and $g$, automorphic under the same congruence subgroup $\Gamma$ with the same weight $k$, the function $$V(z)  := f(z)\overline{g(z)}y^k \in L^2(\Gamma\backslash \H)$$ has sufficient decay in its $h$\th Fourier coefficient and thus is in the domain of the functional for all $s$ with $\Re(s)>1$ and upon calculation one sees that
		\[
			\wp_h(V,s) = \sum_{n=1}^\infty \frac{a(n+h)\overline{b(n)}}{n^{s+k-1}} \Gamma(s+k-1)(4\pi)^{-(s+k-1)}.
		\]
		Let us call
		\[
			D^*(s;h) := \Gamma(s+k-1)(4\pi)^{-(s+k-1)} D_{f,g}(s;h),
		\]
		where the star indicates some sort of ``completed'' Dirichlet series, in the sense that the Gamma factors come naturally out of some integral. Thus with the above notation $\wp_h(V,s) = D^*_{f,g}(s;h)$.
		

	\subsection{The Spectral Expansion} \label{subsec:spectralExpansion}
	
		We would like to spectrally expand the shifted sum Dirichlet series $D_{f,g}(s;h)$, or more correctly, the linear functional $\wp_h(\cdot,s)$. What we mean by that is an expansion of the form
		\begin{equation}\label{eq:spectralExpansion}
			\wp_h(V,s) \!=\! \sum_{j} \!\wp_h(u_j,s)\langle u_j, V \rangle + \frac{1}{4\pi} \! \sum_{\mathfrak{a}}\!\! \int\limits_{-\infty}^\infty \!\wp_h(E_\mathfrak{a}(*,\tfrac12 + it),s)\langle E_\mathfrak{a}(*, \tfrac12 + it), V \rangle \d t.
		\end{equation}
		This expansion is obtained by expanding the square integrable function $V$ in a basis of eigenfunctions of the Laplacian, and naively using the conjugate-linearity of $\wp_h$. There is a slight problem with this plan. Firstly the Maass forms and the Eisenstein series are only in the domain of the functional for $\tfrac12<\Re(s) < 1$, which does not overlap with the region of convergence of the Dirichlet series $D(s;h)$. A second problem is that the linear functional is not continuous with respect to the $L^2$-norm, and therefore after applying the spectral expansion to $V \in L^2$ we cannot simply take the sums and integrals outside of $\wp_h$. These two problems have been overcome in \cite{JeffShiftedSum}.
		
		The linear functional $\wp_h$ can be realized as an inner product with a Poincare series. Put
		\[
			P_h(z,s) = \sum_{\gamma \in \Gamma_\infty \backslash \Gamma} \Im(\gamma z)^s e^{-2\pi i h \gamma z},
		\]
		and then 
		\[
			\wp_h(V,s) = \mathcal{V}\langle P_{h}(*,s), V\rangle.
		\]
		Note that this Poincare series is far from $L^2$, it is exponentially growing in $y$. This is the same reason the functional $\wp_h$ does not have all of $L^2$ as its domain, nor does the domain include Maass forms. In order to overcome this problem we modify this Poincare series to have slightly less growth and therefore enlarge the domain of convergence of the functional. For any $\delta >0$, put
		\[
			P_h(z,s;\delta) = \sum_{\gamma \in \Gamma_\infty \backslash \Gamma} \Im(\gamma z)^s e^{-2\pi i h \Re(\gamma z)}e^{2\pi h \Im(\gamma z) (1-\delta)}.
		\]
		This perturbation of the Poincare series, and hence the functional, allows us to include the Laplace eigenvalues and Eisenstein series into the domain. 
		
		The second problem is easier to overcome. One introduces a truncation parameter $Y$ for the Poincare series, thus making it compactly supported, a fortiori square integrable. Then we can safely take the spectral decomposition for $P$ (or equivalently $V$) in the inner product. The Poincare series in \cite{JeffShiftedSum} is 
		\begin{equation}\label{eq:poincareSeries}
			P_{h,Y}(z,s,\delta) := \sum_{\gamma\in\Gamma_\infty \backslash \Gamma} \Im(\gamma z)^s \mathbbm{1}_{[Y\inv, Y]} (\Im \gamma z) e^{2\pi m \Im(\gamma z)(1-\delta)}e(-m\Re(\gamma z)) .
		\end{equation}

		In the calculation of $\langle P_{h,Y}(*,s,\delta), \phi\rangle $ for $\phi$ a Maass form or an Eisenstein series the following function will make an appearance.
		\begin{defn}
			For any complex $t$ and $\delta >0$ and $\Re(s)> \tfrac12 + \Re(t)$ define
			\[
				M(s,t,\delta) := \int_0^\infty e^{(1-\delta)y} K_{it}(y) y^{s-\hf} \dy.
			\]
		\end{defn}
		This function has a meromorphic continuation to the entire complex plane and analytic properties of $M(s,t,\delta)$ have been carefully studied in \cite{JeffShiftedSum}. We cite Proposition 3.1 loc.\ cit.\ .
		
		\begin{prop}[Hoffstein, Hulse]\label{prop:Mfunction}
			Let $M(s,t,\delta)$ be defined as above and $\vep, \delta >0$ and $A \gg 1$ not an integer. Then the function has meromorphic continuation to all of $\C$ with simple poles at $s = \tfrac12 \pm it - r$ for any nonnegative integer $r \in \N$ and $t \in \C^*$, with residue,
			\begin{equation*}
				\Res_{s = \hf \pm it - r}\mspace{-17mu} M(s,t,\delta) \mspace{-4mu} =\mspace{-4mu} \frac{(-1)^r\sqrt{\pi} 2^{r\mp it}\Gamma(\tfrac12 \mspace{-3mu}\mp\mspace{-3mu} it\mspace{-4mu}+\mspace{-4mu}r)\Gamma(\pm 2it \mspace{-4mu}-\mspace{-4mu} r)}{r!\Gamma(\tfrac12 + it)\Gamma(\tfrac12 - it)} + O\lt((1\mspace{-4mu}+\mspace{-4mu}|t|)^re^{-\frac{\pi}{2}|t|}\delta\rt)\mspace{-5mu}.
			\end{equation*}
			
			If $t = 0$, there is this time a double pole at $s = \tfrac12 - r$, with Laurent series expansion at that point being,
			\begin{align*}
				M(s,t,\delta) = \frac{c_2(r) + O_r(\delta)}{(s - \tfrac 12 +r)^2} + \frac{c_1(r) + O_r(\delta)}{(s- \tfrac12 + r)} + O_r(1) + O_r(\delta^{1-\vep}).			
			\end{align*}
			
			The function itself can be bounded from above. Put $s = \sigma +i\tau$, assume it is at least an distance of $\vep>0$ away from all the poles of $M(s,t,\delta)$. There exists an $A_0 > 1 + |\sigma| + |\Im(t)|$ such that for all $A>A_0$ 
			\begin{equation}\label{eq:MbadBound}
				M(s,t,\delta) \ll_{A,\vep} \delta^{-A} (1+|t|)^{2\sigma -2-2A} (1+|\tau|)^{9A}e^{-\frac{\pi}{2}|\tau|}.
			\end{equation}
			
			One also has a bound which converges as $\delta$ goes to zero. Again with the conditions as above, but this time with $\delta(1+|t|)^2<1$, 
			\begin{align}\label{eq:MgoodBound}
				M(s,t,\delta) =& \frac{\sqrt{\pi}2^{\hf-s}\Gamma(s-\tfrac12 + it) \Gamma(s - \tfrac12 - it)\Gamma(1-s)}{\Gamma(\tfrac12 - it)\Gamma(\tfrac12+it)} \\
				&+ O_{A,\sigma,\vep}\lt((1+|t|)^{2\sigma-2+2\vep}(1+|\tau|)^{9A}e^{-\frac{\pi}{2}|\tau|}\delta^{\vep}\rt)\notag.
			\end{align}
		\end{prop}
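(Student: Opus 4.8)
The plan is to obtain $M(s,t,\delta)$ in closed form and then read off all the assertions from the behaviour of the Gauss hypergeometric function near $z=1$. First I would insert the classical representation $K_{it}(y)=\tfrac12\int_{\R}e^{-y\cosh u}e^{itu}\,\du$ into the definition of $M$; the weight $e^{(1-\delta)y}$ turns $e^{-y\cosh u}$ into $e^{-y(\cosh u-1+\delta)}$ with $\cosh u-1+\delta\ge\delta>0$, so Fubini applies for $\Re(s)$ large and the inner $y$-integral is an elementary Gamma integral, giving
\[
	M(s,t,\delta)=\frac{\Gamma(s-\hf)}{2}\int_{\R}\frac{e^{itu}}{(\cosh u-1+\delta)^{s-\hf}}\,\du .
\]
Substituting $x=e^{u}$ and using $\cosh u-1+\delta=\tfrac{1}{2x}\big(x^{2}-2(1-\delta)x+1\big)$ recasts this as a Mellin integral of the shape $\int_{0}^{\infty}x^{s-\frac32+it}\big(x^{2}-2(1-\delta)x+1\big)^{-(s-\frac12)}\,\dx$, which a standard table identity for the Laplace transform of $K$-Bessel functions (applied to the original $y$-integral, which converges because the two exponential parameters sum to $\delta>0$, followed by Pfaff's transformation carrying the hypergeometric argument from $\tfrac{\delta-2}{\delta}$ to $1-\tfrac\delta2$; see \cite{GR}) evaluates to
\[
	M(s,t,\delta)=\frac{\sqrt{\pi}\,2^{\frac12-s}\,\Gamma(s-\hf+it)\,\Gamma(s-\hf-it)}{\Gamma(s)}\;{}_{2}F_{1}\!\Big(s-\hf+it,\,s-\hf-it;\,s;\,1-\tfrac\delta2\Big).
\]
Since $1-\tfrac\delta2\in(0,1)$ and $(a,b,c)\mapsto{}_{2}F_{1}(a,b;c;z)/\Gamma(c)$ is entire for fixed such $z$, this identity is the meromorphic continuation to all of $\C$, and the only poles are those of $\Gamma(s-\hf\pm it)$, namely $s=\hf\pm it-r$ for $r\in\N$; these are simple when $t\neq0$, while for $t=0$ the two families merge into the double poles at $s=\hf-r$, whose Laurent coefficients $c_{1}(r),c_{2}(r)$ are obtained by expanding $\Gamma(s-\hf)^{2}$ against the remaining regular factor.

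For the residue at $s=\hf+it-r$ I would note that there the numerator parameter $s-\hf-it$ equals $-r$, a non-positive integer, so the hypergeometric is a polynomial in $1-\tfrac\delta2$ of degree $r$; its value at $1$ is the Chu--Vandermonde sum $\tfrac{(\hf-it)_{r}}{(\hf+it-r)_{r}}=\tfrac{\Gamma(\hf-it+r)\Gamma(\hf+it-r)}{\Gamma(\hf-it)\Gamma(\hf+it)}$, and replacing $1-\tfrac\delta2$ by $1$ costs only $O(\delta)$, since the finitely many coefficients are polynomial in $r$ and $t$. Multiplying by $\Res_{s=\hf+it-r}\Gamma(s-\hf-it)=(-1)^{r}/r!$ and by the other factors evaluated at $s=\hf+it-r$, then simplifying with the duplication and reflection formulas for $\Gamma$, reproduces exactly the stated residue; the $O(\delta)$ remainder is $O\big((1+|t|)^{r}e^{-\frac{\pi}{2}|t|}\delta\big)$ because $\Gamma(\pm2it-r)\ll(1+|t|)^{-r-\frac12}e^{-\frac{\pi}{2}|t|}$ and the polynomial coefficients account for the $(1+|t|)^{r}$. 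The symmetric pole at $s=\hf-it-r$ is identical after $t\mapsto-t$.

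The upper bounds likewise come out of this picture. For \eqref{eq:MgoodBound} I would apply the linear transformation expressing ${}_{2}F_{1}(a,b;c;1-\tfrac\delta2)$ as a combination of two hypergeometric series in $\tfrac\delta2$: the first carries the Gauss value ${}_{2}F_{1}(a,b;c;1)=\tfrac{\Gamma(s)\Gamma(1-s)}{\Gamma(\hf-it)\Gamma(\hf+it)}$ and hence, after multiplication by the prefactor, the displayed main term, while the second carries a factor $(\tfrac\delta2)^{1-s}$; both corrections are $O(\delta)$ up to a factor $\asymp(1+|t|^{2})$ (with polynomially many extra powers of $1+|\tau|$) coming from the ratio of hypergeometric parameters, so under the hypothesis $\delta(1+|t|)^{2}<1$ they are genuinely of lower order, while the exponential decay $e^{-\frac{\pi}{2}|\tau|}$ and the powers of $1+|t|$ and $1+|\tau|$ are read off from Stirling's formula applied to the five Gamma factors, whose arguments are shifted by $\pm it$ and $\tau$. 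For the cruder bound \eqref{eq:MbadBound} I would instead work with the $y$-integral directly, splitting the range at $y\asymp1$, $y\asymp|t|$ and $y\asymp\delta^{-1}$ and using the uniform-in-$t$ asymptotics of $K_{it}$ --- oscillatory of amplitude $\ll y^{-1/2}$ for $y\lesssim|t|$ and exponentially small for $y\gtrsim|t|$ --- together with $A$ integrations by parts; each integration costs a power of $\delta^{-1}$ near $y\asymp\delta^{-1}$ (where $e^{(1-\delta)y}$ is $O(1)$ but no longer decaying) and a power of $1+|t|$, and collecting these against the Gamma factor yields \eqref{eq:MbadBound}, the generous exponent $9A$ on $1+|\tau|$ and the freedom to take $A$ non-integer being artifacts of the bookkeeping.

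I expect \eqref{eq:MgoodBound} to be the delicate point: obtaining the error term $O\big((1+|t|)^{2\sigma-2+2\vep}(1+|\tau|)^{9A}e^{-\frac{\pi}{2}|\tau|}\delta^{\vep}\big)$ \emph{uniformly} in $\sigma,\tau,t$ and $\delta$ --- with only the mild constraint $\delta(1+|t|)^{2}<1$ --- requires controlling simultaneously the $z\to1$ expansion of the hypergeometric function, in particular its $(\delta/2)^{1-s}$ transition term when $\Re(s)$ approaches $1$, and the Stirling asymptotics of Gamma factors whose arguments straddle the real axis; in the $y$-picture this is precisely the interaction between the transition range $y\asymp|t|$ of $K_{it}$ and the range $y\asymp\delta^{-1}$ of the exponential weight, and keeping both in hand across all regimes is the main obstacle.
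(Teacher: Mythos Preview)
Your proposal is correct and essentially coincides with the paper's own treatment. The paper's ``proof'' is really a set of remarks deferring to \cite{JeffShiftedSum} and \cite{TomsThesis}: it evaluates the integral via Gradshteyn--Ryzhik 6.621\#3 to obtain a ${}_2F_1$ with argument $1-2/\delta$, then applies the linear transformation 15.3.8 of \cite{StegunHandbook} to reach the two-term expansion in $\delta/2$ (your formula after the $z\mapsto 1-z$ transformation). Your route---inserting the integral representation of $K_{it}$, then Pfaff to land directly on the ${}_2F_1$ with argument $1-\tfrac\delta2$---is just a different path to the same closed form, and your subsequent $z\mapsto 1-z$ step reproduces the paper's two-term formula \eqref{eq:MasSumOfTwo} verbatim; your residue computation via Chu--Vandermonde is in fact more explicit than what the paper writes out. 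The paper explicitly omits the proof of \eqref{eq:MbadBound} and notes that the original argument in \cite{JeffShiftedSum} uses a Mellin--Barnes integral rather than your proposed $y$-integral splitting, but this is a minor methodological variation.
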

		
		\begin{proof}[Some Remarks]
			The complete proof of the proposition can be found in \cite{JeffShiftedSum} and \cite{TomsThesis}. The proof of \eqref{eq:MbadBound} is missing from the following discussion. The meromorphic continuation of $M$ can be seen from direct evaluation of the integral using formula 6.621 \# 3 of \cite{GR} after which one gets,
			\[
				M(s,t,\delta) = \frac{\sqrt{\pi}2^{it}}{\delta^{s-\hf + it}} \frac{\Gamma(s\mspace{-1mu}-\mspace{-1mu}\tfrac 12 \mspace{-1mu}-\mspace{-1mu}it) \Gamma(s\mspace{-1mu}-\mspace{-1mu}\tfrac12 \mspace{-1mu}+ \mspace{-1mu}it)}{\Gamma(s)} \,\tensor[_2]{F}{_1} \!\!\!\lt(s\mspace{-2mu}-\mspace{-2mu}\frac12 \mspace{-2mu}+ \mspace{-2mu}it, \frac12 \mspace{-2mu}+\mspace{-2mu} it;s;1\mspace{-2mu}-\mspace{-2mu}\frac{2}{\delta}\rt)
			\]
			where $\tensor[_2]{F}{_1}\!\! =F$ is the classical Gauss hypergeometric function. The integral evaluates to this function under the conditions $\delta >0$ and $\Re(s) > |\Im(t)|+\tfrac12$. However after the evaluation we realize that the right hand side immediately gives the meromorphic continuation of $M$ to the entire plane for any $\delta >0$. For the behavior of $M(s,t,\delta)$ near $\delta = 0$, one could apply a Mellin-Barnes integral as in the original proof, or one could use a linear transformation formula of the Gauss hypergeometric function, such as formula 15.3.8 of \cite{StegunHandbook} after which we obtain 
			\begin{align}
				M(s,t,\delta) \mspace{-3mu}=& \frac{\sqrt{\pi}}{2^{s-\hf}}\frac{\Gamma(s\mspace{-3mu}-\mspace{-3mu}\hf \mspace{-3mu}+ \mspace{-3mu}it )\Gamma(s\mspace{-3mu}-\mspace{-3mu}\hf\mspace{-3mu} -\mspace{-3mu}it)\Gamma(1\mspace{-3mu}-\mspace{-3mu}s)}{\Gamma(\hf+it)\Gamma(\hf-it)} F\mspace{-3mu}\lt(\mspace{-3mu}s\mspace{-2mu}-\mspace{-2mu}\hf\mspace{-2mu}+\mspace{-2mu}it, s\mspace{-2mu}-\mspace{-2mu}\hf \mspace{-2mu}-\mspace{-2mu} it;s;\frac{\delta}{2} \rt)\notag\\ 
				&+\sqrt{\frac\pi2} \Gamma(s-1)\delta^{1-s}F\lt(\hf+it,\hf-it;2-s; \frac\delta2\rt). \label{eq:MasSumOfTwo}
			\end{align}
			
			Note that for $\Re(s)<1$, we may send $\delta \to 0$, and then the second summand vanishes.
			\[
				M(s,t,0) = \frac{\sqrt{\pi}}{2^{s-\hf}} \frac{\Gamma(s-\hf + it ) \Gamma(s-\hf -it)\Gamma(1-s)}{\Gamma(\hf+it)\Gamma(\hf-it)}.
			\]
			From this expression it might seem like there is a pole at $s =1$ however note that this is outside of the allowed domain. For any positive $\delta$, there are poles at $s = 1$ in either summand of \eqref{eq:MasSumOfTwo}, whose residues cancel one another.
		\end{proof}

		A simple calculation, and the definition of the function $M$, gives the following proposition.
		\begin{prop}\label{prop:PinnerProduct}
			Let $u_j$ be a Maass form of eigenvalue $\tfrac14 + t_j^2$ as in \eqref{eq:FourierMaass}, and $E_\mathfrak{a}(z,u)$ be an Eisenstein series with Fourier expansion expansion given in Proposition \ref{prop:eisensteinFourier}. Put $\mathcal{V}= \Vol(\Gamma \backslash \H)$.
			\[
				\lim_{Y \to \infty } \mathcal{V} \langle P_{h,Y}(*,s,\delta), u_j\rangle = \frac{\overline{\rho_j(-h)}}{(2\pi h)^{s-\hf}}M(s,t_j,\delta) 
			\]
			and
			\[
				\lim_{Y \to \infty} \mathcal{V}\langle P_{h,Y}(*,s,\delta), E_{\mathfrak{a}}(*,u) \rangle =\frac{\overline{\rho_{\mathfrak{a}}(u,-h)}}{(2\pi h)^{s-u - 1}} \frac{4\pi^u}{\Gamma(u)}M(s,\tfrac{1}{i}(u-\tfrac12),\delta).
			\]
		\end{prop}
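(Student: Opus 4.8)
The plan is to compute both inner products directly from the Fourier expansions, using the truncated Poincaré series $P_{h,Y}$ to unfold the integral over $\Gamma\backslash\H$ to a strip, and then recognizing the resulting $y$-integral as (a shift of) the function $M(s,t,\delta)$.

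First I would unfold: since $P_{h,Y}(z,s,\delta)$ is built as a sum over $\Gamma_\infty\backslash\Gamma$ of a function of $\Im(\gamma z)$ and $\Re(\gamma z)$, the standard unfolding argument gives
\[
	\mathcal{V}\langle P_{h,Y}(*,s,\delta),\phi\rangle = \int_0^\infty\int_0^1 y^s \mathbbm{1}_{[Y\inv,Y]}(y) e^{2\pi h y(1-\delta)} e(-h x)\,\overline{\phi(z)}\,\dz,
\]
for $\phi$ either a Maass form $u_j$ or an Eisenstein series $E_\mathfrak{a}(*,u)$. The $x$-integral over $[0,1]$ picks out exactly the $h$\th\ Fourier coefficient of $\phi$ (with the sign convention matching $e(-hx)$, so it is the coefficient indexed by $-h$ in the expansions \eqref{eq:FourierMaass} and Proposition \ref{prop:eisensteinFourier}, after conjugation). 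For $\phi = u_j$ this leaves
\[
	\overline{\rho_j(-h)} \int_{Y\inv}^Y y^s \, e^{2\pi h y(1-\delta)} \, \sqrt{y}\, K_{it_j}(4\pi h y)\, \frac{\dy}{y^2},
\]
and the substitution $y \mapsto y/(2\pi h)$ turns the integral into $(2\pi h)^{-(s-\hf)}\int_{\cdot}^{\cdot} e^{(1-\delta)y} K_{it_j}(2y) y^{s-\hf}\,\dy$ — wait, one must be careful with the argument: $K_{it_j}(4\pi h y)$ with $y\mapsto y/(4\pi h)$ gives $K_{it_j}(y)$ but then the exponential becomes $e^{\frac{y}{2}(1-\delta)}$; rescaling $\delta$ appropriately (the statement's $M$ uses $e^{(1-\delta)y}K_{it}(y)$, so one rescales so the Bessel argument is $2y$ matching the Poincaré exponential, exactly as in the definition of $M$). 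In any case, after the correct rescaling the truncated integral is $\int_{Y\inv}^{cY} e^{(1-\delta)y}K_{it_j}(y)y^{s-\hf}\,\dy$ up to the factor $(2\pi h)^{-(s-\hf)}$, and letting $Y\to\infty$ gives precisely $M(s,t_j,\delta)$ by its definition — the limit existing because the factor $e^{2\pi h y(1-\delta)}$ against the Bessel decay $K_{it_j}(4\pi h y)\sim e^{-4\pi h y}$ leaves net exponential decay $e^{-2\pi\delta h y - 2\pi h y}$, securing convergence for $\Re(s)$ in the relevant range. This yields the first formula.

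For the Eisenstein case the computation is the same except the $h$\th\ Fourier coefficient from Proposition \ref{prop:eisensteinFourier} is $\frac{2\pi^u\sqrt{y}}{\Gamma(u)}|h|^{u-\hf}\rho_{\mathfrak{a}}(u,-h)K_{u-\hf}(2\pi|h|y)$ (the $\delta_{\mathfrak{a},\infty}y^u$ and $y^{1-u}$ terms are killed by the $e(-hx)$ integral since $h\neq 0$). The same rescaling $y\mapsto y/(2\pi h)$ extracts $(2\pi h)^{-(s - u - 1)}$ — the shift from $s-\hf$ to $s-u-1$ coming from the extra factor $|h|^{u-\hf}$ and the change of the Bessel index — together with $\frac{4\pi^u}{\Gamma(u)}$ (the $2$ times the $2\pi^u$, after pulling the $\pi$ into the rescaling appropriately; this bookkeeping is routine) and the integral $\int_0^\infty e^{(1-\delta)y}K_{u-\hf}(y)y^{s-\hf}\,\dy$. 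Writing the Bessel index $u-\hf$ as $i t$ with $t = \frac1i(u-\tfrac12)$ identifies this last integral as $M(s,\frac1i(u-\tfrac12),\delta)$, giving the second formula.

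The main obstacle is the careful bookkeeping of constants and of the precise exponent of $2\pi h$ and of the Bessel-argument rescaling that reconciles the Poincaré exponential $e^{2\pi h y(1-\delta)}$ with the normalization $e^{(1-\delta)y}$ inside the definition of $M$; getting the shift $s - \hf \leadsto s - u - 1$ and the factor $\frac{4\pi^u}{\Gamma(u)}$ exactly right is where all the care is needed. The analytic point — justifying the interchange of the $Y\to\infty$ limit with the integral — is genuinely easy here because truncation makes everything compactly supported and the limiting integrand has honest exponential decay at both ends once $\Re(s)$ is large enough, so dominated convergence applies; the meromorphic continuation in $s$ is then inherited from that of $M$ via Proposition \ref{prop:Mfunction}.
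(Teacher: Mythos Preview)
Your approach is correct and is exactly what the paper intends: it states only that ``a simple calculation, and the definition of the function $M$, gives the following proposition,'' i.e.\ unfold the Poincar\'e sum to a strip, integrate in $x$ to isolate the $(-h)$\th\ Fourier coefficient, and rescale $y$ so that the remaining integral is $M(s,t,\delta)$ by definition.

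Your hesitation about the Bessel-argument rescaling is not a flaw in your argument but a symptom of an inconsistency in the paper: the Maass expansion \eqref{eq:FourierMaass} is written with $K_{it_j}(4\pi|m|y)$, whereas the Eisenstein expansion in Proposition~\ref{prop:eisensteinFourier} uses $K_{s-\hf}(2\pi|m|y)$.  The stated formula in Proposition~\ref{prop:PinnerProduct} is correct with the $2\pi$ normalization (then the substitution $y\mapsto y/(2\pi h)$ produces $e^{(1-\delta)y}K_{it_j}(y)$ on the nose and the factor $(2\pi h)^{-(s-\hf)}$ drops out exactly), so treat the $4\pi$ as a typo rather than trying to absorb the discrepancy into~$\delta$, which cannot work since $e^{y(1-\delta)/2}=e^{y(1-\delta')}$ forces $\delta'\to\tfrac12$ as $\delta\to0$.
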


		Next we sum and integrate over the eigenvalues to obtain an expansion such as \eqref{eq:spectralExpansion}. First we give some definitions.
		\begin{defn}
			Let $f$ and $g$ be two holomorphic modular forms of level $N$, weight $k$. Put, $V(z) = f(z)g(z)y^k$ as before. Define,
			\begin{equation}\label{eq:innerProductPV}
				D_{f,g}^*(s;h;\delta) := \lim_{Y\to\infty } \mathcal{V} \langle P_{h,Y}(*,s;\delta), V(z)\rangle.
			\end{equation}
		\end{defn}
		
		We can compute the inner product on the right via an unfolding method and we get for $\Re(s)>1$ a convergent Dirichlet series.
		\[
			D_{f,g}^*(s;h;\delta) := \frac{\Gamma(s+k-1)}{(4\pi)^{s+k-1}}\sum_{n=1}^\infty \frac{a(n+h) \overline{b(n)}}{(n + h\delta/2)^{s+k-1}}.
		\] 
		
		\begin{defn}
			Set
			\[
				D_{f,g}(s;h;\delta) := \sum_{n=1}^\infty \frac{a(n+h) \overline{b(n)}}{(n + h\delta/2)^{s+k-1}},
			\]
			which converges for $\Re(s)>1$.
		\end{defn}
		
		Notice that 
		\[
			\lim_{\delta \to 0} D_{f,g}(s; h; \delta) = D_{f,g}(s; h).
		\]
		
		When we expand $P_{h,Y}(z,s,\delta)$ in a spectral basis we obtain a spectral expansion of $D_{f,g}^*(s;h;\delta)$ as in \eqref{eq:DdeltaSpectral} below. First let us settle some convergence issues.
		
		Proposition \ref{prop:PinnerProduct} shows that the $-h$\th\ Fourier coefficient of a Maass form is going to be in the spectral expansion of $D_{f,g}$. When we normalize $u_j$ to have $\|u_j\|_{L^2}=1$, this Fourier coefficient grows exponentially with the eigenvalue. However as in \cite{JeffShiftedSum}, equation (4.3) we have the bound 
		\[
			\sum_{ T\leq|t_j| \leq2T} |\rho_j(-h)|^2  e^{-\pi|t_j|} \ll_h NT^2.
		\] 
		This rapid growth of Fourier coefficients of Maass forms is compensated for by the rapid decay of $\langle u_j, V\rangle$. The proof of the explicit rapid decay is accomplished in the appendix of \cite{JeffShiftedSum} by Andre Reznikov via representation theoretic methods. We cannot apply these mehtods in this context, and throughout sections \ref{sec:AutomorphicKernel} to \ref{sec:selbergTransform} we use methods similar to those in \cite{SarnakIMRN94} proving the following Proposition.
		\begin{prop}\label{prop:tripleInnerBound}
			Given two half integral weight modular forms $f$ and $g$ of weight $k$, put $V(z) = f(z)\overline{g(z)}y^k$. If $u_j$ are Maass forms of type $\hf + it_j$, one has the bound, 
			\[
				\sum_{T\leq |t_j|\leq T+1}\lt| \langle u_j,V \rangle \rt|^2 \ll  \|fy^{\frac k2}\|_{L^\infty}^2 \|gy^{\frac k2}\|_{L^\infty}^2  T^{4k+2}e^{-\pi T}.
			\]
		\end{prop}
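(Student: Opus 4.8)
The plan is to follow the geometric method of Sarnak \cite{SarnakIMRN94}, set up as a pretrace-type inequality for an automorphic kernel. Fix a spectral parameter $T \ge 1$ and choose an even test function $h = h_T$ which is nonnegative on $\R$, extends holomorphically to a strip $|\Im t| < \hf + \vep$ and is also nonnegative on the exceptional segment $i(0,\hf]$, satisfies $h_T(t) \gg 1$ for $t \in [T,T+1]$, and whose Selberg/Harish-Chandra inverse transform $k_T$ is under sufficiently good control --- one convenient device is to take $h_T = |\widetilde h_T|^2$ for a $\widetilde h_T$ of prescribed exponential type. Let $B_T$ be the integral operator on $L^2(\Gamma\backslash\H)$ with automorphic kernel $\sum_{\gamma\in\Gamma}k_T(z,\gamma w)$. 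The pretrace formula gives
\[
\sum_j h_T(t_j)\,\lt|\langle u_j,V\rangle\rt|^2 + \bigl(\text{Eisenstein and residual contributions}\bigr) = \langle B_T V,V\rangle,
\]
and since $h_T \ge 0$ throughout the spectrum while $h_T(t_j)\gg 1$ on the window, it suffices to prove $\langle B_T V,V\rangle \ll \|fy^{\frac k2}\|_{L^\infty}^2\,\|gy^{\frac k2}\|_{L^\infty}^2\, T^{4k+2}e^{-\pi T}$.

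For the geometric side I would unfold one of the two copies of $\Gamma\backslash\H$, reducing matters to
\[
\langle B_T V,V\rangle = \frac{1}{\mathcal V^2}\iint_{\Gamma\backslash\H}\overline{V(z)}\lt(\iint_{\H} k_T(z,w)\,V(w)\,d\mu(w)\rt)d\mu(z),
\]
and substitute $V(w) = f(w)\overline{g(w)}\,\Im(w)^k$. Estimating this naively with $|V| \le \|V\|_{L^\infty} = \|fy^{\frac k2}\|_{L^\infty}\|gy^{\frac k2}\|_{L^\infty}$ only bounds the right-hand side by roughly $\|V\|_{L^\infty}^2\,\|k_T\|_{L^1(\H)}$, which decays only polynomially in $T$; the exponential gain must come from the holomorphy of $f$ and $g$. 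The point is that $f(w)\overline{g(w)}$ is real-analytic and, for fixed height, extends holomorphically in the horizontal variable to a strip of width comparable to $\Im(w)$, while $f$ and $g$ decay exponentially toward the cusps. This permits a deformation of the integral representation of $k_T$ (equivalently, of the $z$- and $w$-contours) into the complexification by an amount bounded below uniformly in $T$; controlling how $k_T$ grows along the deformed contour through the explicit Selberg/Harish-Chandra inversion formula, where the exponential decay $|\Gamma(\hf+iT)| \asymp e^{-\pi T/2}$ of the gamma factors is the ultimate source of the gain, produces the factor $e^{-\pi T}$. The polynomial factor $T^{4k+2}$ and the exact powers of the sup-norms then drop out of bookkeeping the weight-$k$ factors $\Im(w)^k,\Im(z)^k$ and the gamma-quotients along that contour. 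This is the content of Sections \ref{sec:AutomorphicKernel}--\ref{sec:selbergTransform}.

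The main obstacle is making this complex deformation rigorous and, above all, uniform in $T$ with the explicit polynomial dependence claimed: the automorphic kernel $\sum_{\gamma}k_T(z,\gamma w)$ need not survive the shift term by term, so the deformation must be carried out inside the integral against $V$, the behavior of $k_T$ at complex arguments must be read off from the explicit inversion formula rather than estimated abstractly, and the boundary and error terms created by the shift must be shown to be negligible --- the tension being between shifting far enough to obtain a clean exponential rate and keeping the deformed kernel and the $\Gamma$-sum convergent. Arranging a single $h_T$ with all the positivity and decay properties above is a subsidiary (routine) point, handled by the choice $h_T = |\widetilde h_T|^2$. I note finally that Reznikov's representation-theoretic argument, which yields the analogous bound in the integral-weight setting from the uniqueness of trilinear forms, does not apply here because $f$ and $g$ transform under the metaplectic cover, so the geometric method is essential rather than a matter of convenience.
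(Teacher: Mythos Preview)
Your overall plan is the right one --- Sarnak's geometric method via an automorphic kernel, with the exponential gain coming from the holomorphy of $f$ and $g$ --- but your setup differs from the paper's in two concrete ways, and your description of where the factor $e^{-\pi T}$ comes from is not the mechanism the paper actually uses.

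First, the paper does not bound $\langle B_T V,V\rangle$. Instead it pairs $V$ with $K(\cdot,z')$ in the $z$-variable to obtain a function of $z'$, spectrally expands that function, and then takes the $L^2(z')$-norm. Plancherel gives
\[
\sum_j |h(t_j)|^2\,|\langle u_j,V\rangle|^2 \;+\; (\text{continuous part}) \;=\; \bigl\|\mathcal V\langle V,K(\cdot,z')\rangle_z\bigr\|_{L^2(z')}^2 \;\le\; \sup_{z'}\bigl|\mathcal V\langle V,K(\cdot,z')\rangle_z\bigr|^2,
\]
so the weight is $|h(t_j)|^2$, not $h(t_j)$. Positivity is therefore automatic and one may take the simple Gaussian $h(t)=e^{-\pi(t-T)^2}+e^{-\pi(t+T)^2}$ without the $h_T=|\widetilde h_T|^2$ device. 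This matters, because the inverse Selberg transform of the Gaussian is explicit and elementary, whereas the inverse transform of a square is not.

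Second, and more substantively, the contour shift is not carried out in the $z$- or $w$-variables. For each fixed $z'$ the paper passes to the disc model centred at $z'$, integrates out the angular variable using the Taylor expansions of the (now holomorphic on $\D$) transplants $\phi_1,\phi_2$ of $f,g$, and reduces the inner product to a one-dimensional radial integral
\[
\int_0^\infty H(\xi)\bigl(T\sin(\xi T)+2\pi\xi\cos(\xi T)\bigr)e^{-\pi\xi^2}\,\dxi,
\]
where $H(\xi)$ is an Abel-type transform of an even holomorphic function $B(\rho)$ built from $\phi_1\overline{\phi_2}$ and a hypergeometric kernel. The crucial analytic fact is that $H(\xi)$ extends holomorphically to the strip $|\Im\xi|<\tfrac\pi2$, with growth controlled by $M_1M_2\,T^{2k}$ on the lines $\Im\xi=\pm(\tfrac\pi2-\tfrac1T)$. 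Writing the trigonometric factors as exponentials and shifting the $\xi$-contour to those lines converts $e^{\pm i\xi T}$ into $e^{-(\pi/2-1/T)T}$; after squaring (from the $L^2(z')$ step) this is the $e^{-\pi T}$. No Stirling asymptotics or $\Gamma(\hf+iT)$ ever appear: the exponential decay comes purely from this $\xi$-shift, and the polynomial $T^{4k+2}$ from the bound on $H$ near the edge of the strip together with the explicit $T$ in front of $\sin(\xi T)$.

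In short: your strategy is sound, but the paper's implementation replaces your double integral $\langle B_TV,V\rangle$ by a sup over a \emph{single} radial integral, and the exponential saving is obtained by a contour shift in the Harish-Chandra--Selberg dual variable $\xi$, not by reading off gamma-factor decay.
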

		This will be proven in Section \ref{sec:selbergTransform}.
		
		The Poincare series \eqref{eq:poincareSeries} is compactly supported, hence square integrable. Therefore we may apply spectral expansion of $P_{h,Y}(*,s,\delta)$ and then the inner product in \eqref{eq:innerProductPV} and take the limit as $Y\to\infty$. Due to the exponential decay stated in Proposition \ref{prop:tripleInnerBound}, the expression converges. We thus obtain,
		\begin{align}\label{eq:DdeltaSpectral}
			D_{f,g}^*&(s;h;\delta) = \sum_{j} \frac{\overline{\rho_j(-h)}}{(2\pi h)^{s-\hf}} M(s,t_j,\delta)\langle u_j, V\rangle \\
			&+ \frac{1}{4\pi}\sum_{\mathfrak{a}\text{ cusp}} \int\limits_{-\infty}^\infty  \frac{\rho_{\mathfrak{a}}(\tfrac12 - it, -h)}{h^{s-\hf +it}} \frac{2^{\frac32-s}\pi^{1-s -it}}{\Gamma(\tfrac12 - it)} M(s,t,\delta) \langle E_{\mathfrak{a}}(*,\tfrac12 + it),V\rangle \d t.\notag 
		\end{align}
		The convergence of the integral in $t$ requires us to know that $\langle E_{\mathfrak{a}}\lt(*,\tfrac12 + it\rt),V\rangle$ decays exponentially in $t$. We can explicitly calculate this inner product, and the Gamma factors give us the desired exponential decay.
		
		For the convergence of the sum and the integral in \eqref{eq:DdeltaSpectral} we also need decay of $M(s,t,\delta)$ in $t$, as seen in Proposition \ref{prop:Mfunction}. However the rapid decay of $M$ in $t$ is not well behaved as $\delta\to 0$ in \eqref{eq:MbadBound}, therefore using the bound \eqref{eq:MgoodBound} in the same proposition, we are able to get convergence.
		
		Notice that the bound in \eqref{eq:MgoodBound} is polynomially decaying in $t$, and the rate of polynomial decay is faster as $\Re(s)$ becomes smaller. The convergence of the summation over $t_j$ is satisfied only if $s$ has sufficiently small real part. In fact $$\{s\in\C : \Re(s)<-\tfrac14-k\}$$ is the domain for which we can meaningfully take the limit as $\delta \to 0$ in \eqref{eq:DdeltaSpectral}.
		
		Although the integral over the continuous spectrum converges as $\delta \to 0$ in some right half plane, it is not true, not even for a fixed positive $\delta$ that the meromorphic continuation of $D^*_{f,g}(s;h;\delta)$ is given by the same expression. The analytic continuation of the integral requires more terms as you pass over the lines where the integrand has a pole for some value of the integrating parameter. 
		
		The following theorem is obtained in a very similar manner to the integral weight case. Notice that the information about the modular form $f$ is contained only in the inner products $\langle u_j,V\rangle$ and $\langle E_{\mathfrak{a}}(*,\hf + it)\rangle$. So the only difference of Theorem \ref{thm:AnalyticContinuation} and (5.5) in Proposition 5.1 of \cite{JeffShiftedSum} is the different regions of convergence.
		
		\begin{theorem}\label{thm:AnalyticContinuation}
			The function $D(s;h)$ given by \eqref{eq:shiftedDirichlet} has a meromorphic continuation to the entire complex plane and in the region $\Re(s)< -\tfrac14-k$, with $\Re(s)$ not a half integer, it is given by the formula
			\begin{align*}
				D^*_{f,g}&(s;h) \mspace{-2mu}=\mspace{-3mu} \sum_{j}\frac{\overline{\rho_j(-h)}\langle u_j , V\rangle}{(2\pi h)^{s-\hf}} \frac{\sqrt{\pi}}{2^{s-\hf}}\frac{\Gamma(s-\hf + it_j )\Gamma(s-\hf -it_j)\Gamma(1-s)}{\Gamma(\hf+it_j)\Gamma(\hf-it_j)}\\
				&+ \!\!\frac{1}{4\pi} \mspace{-3mu}\sum_{\mathfrak{a} \text{ cusp}} \!\int\limits_{(\hf)}\!\! \frac{\rho_{\mathfrak{a}}(1\mspace{-4mu}-\mspace{-4mu}u,\!-h)\langle E_{\mathfrak{a}}(*,\mspace{-4mu}u),V\rangle}{h^{s+u-1}}\frac{\pi^{2-s-u}}{2^{2s-2}}\frac{\Gamma(\!s\mspace{-3mu}+\mspace{-3mu}u \mspace{-3mu} -\mspace{-5mu}1 \!)\Gamma(\!s\mspace{-3mu}-\mspace{-3mu}u\!)\Gamma(\!1\mspace{-3mu}-\mspace{-3mu}s\!)}{\Gamma(u)\Gamma(1-u)^2} \d u\\
				&+\!\! \sum_{\mathfrak {a} \text{ cusp}} \!\!\sum_{r = 1}^{\lfloor \hf -  \sigma \rfloor}\mspace{-4mu} \frac{\overline{\rho_{\mathfrak{a}}(1\mspace{-3mu}-\mspace{-3mu}s\mspace{-3mu}-\mspace{-3mu}r,\mspace{-3mu}-\mspace{-2mu}h)}(-\!1)^r\Gamma(1\mspace{-3mu}-\mspace{-3mu}s) \Gamma(2s\mspace{-3mu}+\mspace{-3mu}r\mspace{-3mu}-\mspace{-4mu}1) }{h^{2s+r-1}\pi^{s+r-1}r!\Gamma(s\!+\!r)\Gamma(1\!-\!s\!-\!r)^2} \langle E_{\mathfrak{a}}(*,\mspace{-3mu}1\mspace{-3mu}-\mspace{-3mu}s\mspace{-3mu}-\mspace{-3mu}r), V\rangle.
			\end{align*}
		\end{theorem}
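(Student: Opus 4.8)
The plan is to take \eqref{eq:DdeltaSpectral}---the $\delta$-regularised spectral expansion of $D^*_{f,g}(s;h;\delta)$, valid for $\Re(s)>1$ and every $\delta>0$---continue each of its three ingredients meromorphically in $s$, and only at the very end let $\delta\to0$; one then recovers $D_{f,g}(s;h)$ from $\lim_{\delta\to0}D_{f,g}(s;h;\delta)$ and undoes the completion $D^*(s;h)=(4\pi)^{-(s+k-1)}\Gamma(s+k-1)D(s;h)$. Since $f$ and $g$ enter \eqref{eq:DdeltaSpectral} only through the numbers $\langle u_j,V\rangle$ and $\langle E_{\mathfrak a}(*,\hf+it),V\rangle$ with $V=f\overline g\,y^k$, this is the computation of Proposition~5.1 of \cite{JeffShiftedSum} verbatim, and the one substantive difference is that here the triple product is controlled only by Proposition~\ref{prop:tripleInnerBound}, whose polynomial factor $T^{4k+2}$ is weaker than what Reznikov's method gives in the integral weight case; this is exactly what forces the resulting formula to be valid only in $\Re(s)<-\tfrac14-k$.

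For the discrete part I would substitute for each $M(s,t_j,\delta)$ its meromorphic continuation from Proposition~\ref{prop:Mfunction}; by \eqref{eq:MasSumOfTwo}, for $\Re(s)<1$ its limit as $\delta\to0$ is the closed form $\frac{\sqrt\pi}{2^{s-\hf}}\cdot\frac{\Gamma(s-\hf+it_j)\Gamma(s-\hf-it_j)\Gamma(1-s)}{\Gamma(\hf+it_j)\Gamma(\hf-it_j)}$, which is the factor appearing on the first line of the theorem. Passage to the limit termwise, and absolute convergence of the limiting series, are justified by Cauchy--Schwarz applied to Proposition~\ref{prop:tripleInnerBound} together with the Maass coefficient bound $\sum_{T\le|t_j|\le T+1}|\rho_j(-h)|^2\ll_h NT^2e^{\pi T}$ of \cite{JeffShiftedSum}: since Stirling makes the $\Gamma$-ratio of size $(1+|t_j|)^{2\Re(s)-2}$, the dyadic sum over $t_j$ behaves like $\sum_T T^{2k+1+2\Re(s)}$ and converges precisely when $\Re(s)<-\tfrac14-k$. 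To control the $\delta$-error one splits the $j$-sum at $|t_j|\asymp\delta^{-1/2}$, using the $\delta$-stable bound \eqref{eq:MgoodBound} below the threshold and \eqref{eq:MbadBound} above it; in the same half-plane the error tends to $0$ as $\delta\to0$.

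For the continuous part I would first change variables $u=\hf+it$, turning $\tfrac1{4\pi}\int_{-\infty}^{\infty}(\cdots)\,\dt$ into $\tfrac1{4\pi i}\int_{(1/2)}(\cdots)\,\du$; a short computation collapsing the explicit powers and $\Gamma$-factors in \eqref{eq:DdeltaSpectral} against the closed form of $M$ gives the integrand $\frac{\rho_{\mathfrak a}(1-u,-h)\langle E_{\mathfrak a}(*,u),V\rangle}{h^{s+u-1}}\frac{\pi^{2-s-u}}{2^{2s-2}}\frac{\Gamma(s+u-1)\Gamma(s-u)\Gamma(1-s)}{\Gamma(u)\Gamma(1-u)^2}$ of line two. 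This $u$-integral is absolutely convergent for $\Re(s)<-\tfrac14-k$ because $\langle E_{\mathfrak a}(*,\hf+it),V\rangle$ decays like $e^{-\pi|t|/2}$ (the $\Gamma$-factors produced by unfolding it against $V$), which dominates the remaining factors, of at most polynomial-times-$e^{\pi|t|/2}$ growth. Its analytic continuation in $s$, however, is not given by the same expression: as $\Re(s)$ decreases past $\hf-r$ the poles of $M$, at $u=s+r$ and $u=1-s-r$, cross the line $\Re(u)=\hf$, so one must add their residues, evaluated via Proposition~\ref{prop:Mfunction} and then rewritten using the functional equation of the Eisenstein series (Proposition~\ref{prop:eisensteinFourier}) exactly as in \cite{JeffShiftedSum}; summing over cusps $\mathfrak a$ and over $1\le r\le\lfloor\hf-\Re(s)\rfloor$ produces line three, all of whose constituents are finite.

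The main obstacle is the uniformity in $\delta$: because \eqref{eq:MbadBound} blows up like $\delta^{-A}$, one cannot interchange $\lim_{\delta\to0}$ with the $j$-sum and the $t$-integral by a crude dominated-convergence argument, and the two-regime split described above is needed---the break-even between the $\delta$-powers it produces, against the growth rates fixed by Proposition~\ref{prop:tripleInnerBound}, is precisely what pins the domain of validity to $\Re(s)<-\tfrac14-k$, a strictly smaller half-plane than in \cite{JeffShiftedSum}. Once the displayed identity holds on $\Re(s)<-\tfrac14-k$, its right-hand side is manifestly meromorphic on all of $\C$: a finite sum of residual terms (with $\lfloor\,\cdot\,\rfloor$ locally constant off the half-integers), a series of meromorphic functions converging locally uniformly in the complement of the polar divisor, and a contour integral continued to the rest of $\C$ by the usual shift-and-collect argument. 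Dividing through by $(4\pi)^{-(s+k-1)}\Gamma(s+k-1)$ then yields the meromorphic continuation of $D_{f,g}(s;h)=D(s;h)$ to $\C$, completing the proof.
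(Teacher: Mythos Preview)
Your approach is essentially the paper's: take the $\delta$-regularised spectral expansion \eqref{eq:DdeltaSpectral}, continue meromorphically via Proposition~\ref{prop:Mfunction}, and pass to the limit $\delta\to0$, the only substantive change from \cite{JeffShiftedSum} being that Proposition~\ref{prop:tripleInnerBound} (with its $T^{4k+2}$) replaces Reznikov's bound and forces the smaller half-plane $\Re(s)<-\tfrac14-k$. Your handling of the discrete part, the contour-shift for the continuous part producing the residual sum, and the two-regime split in $\delta$ all match what the paper invokes.

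There is one genuine slip in your final paragraph. You assert that once the displayed identity holds on $\Re(s)<-\tfrac14-k$, ``its right-hand side is manifestly meromorphic on all of $\C$,'' citing local uniform convergence of the cuspidal series. That is false: the cuspidal sum $\sum_j\overline{\rho_j(-h)}\langle u_j,V\rangle\,G(s,\tfrac12+it_j)$ diverges for $\Re(s)\ge-\tfrac14-k$, precisely by the same Stirling-plus-Cauchy--Schwarz count you used to establish convergence on the left. So the formula itself cannot furnish the continuation across the strip $-\tfrac14-k<\Re(s)<1$. The paper makes this explicit in the paragraph immediately following the theorem: in that strip $D(s;h)$ has \emph{no} explicit expression, and the continuation is obtained instead by the contour-integral device of \cite{JeffShiftedSum}, Section~5 --- one writes $D^*_{f,g}(s;h;\delta)$ for $s$ in the strip as a Cauchy integral over two vertical lines, one in $\Re(s)>1$ and one in $\Re(s)<-\tfrac14-k$, and then lets $\delta\to0$ using the explicit limits already established on those lines. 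Replace your last paragraph's justification with this argument and the proof is complete.
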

		
		Notice that there is a region of the complex plane where $D(s;h)$ does not have an explicit expression, namely the region where $-1/4 - k < \Re(s) <1$. For positive $\delta > 0$, $D_{f,g}(s;h;\delta)$ is a meromorphic function defined on the whole plane. The limit as $\delta$ goes to zero, has explicit convergent expressions on some left and right-half planes. In Section 5 of \cite{JeffShiftedSum}, a contour integral has been constructed, which gives the value of $D(s,h)$ as sums of contour integrals in these left and right half-planes.
		

	\subsection{Analytical continuation of $Z_Q(s,w)$}\label{subsec:ZQ}
		
		We obtain $Z_Q(s,w)$ by adding the shifted Dirichlet series as in Definition \ref{def:ZQ}. Supressing all but the $h$ dependence we have
		\[
			D(s;h) \ll h^{\frac{k-1}2}.
		\]
		Therefore this sum converges for $\Re(s), \Re(w)>1$. 
		
		Now we do the addition in the spectral domain, that is for $s \in \C$ with $\Re(s) < -1/4 - k$, do the summation for $\Re(w)$ large enough, so that the summation of the expression in Theorem \ref{thm:AnalyticContinuation} converges. First some definitions.   
		\[
			s' = s - \hf + w + \frac{k-1}{2},
		\]
		The summation will be over $h$ which are multiples of $Q$. The spectral expansion has the $h$\th\ Fourier coefficient of Maass forms and Eisenstein series, therefore after summing over $h$ we will obtain Dirichlet series, which resembles $L$-functions of $\overline{u_j}$ except that the Dirichlet series contains only multiples of $Q$:
		\begin{equation}\label{eq:LQ}
			L_Q(s,\overline{u_j}) : = \sum_{h = 1}^\infty \frac{\overline{\rho_j(-hQ)}}{(hQ)^s}.
		\end{equation}
		Similarly,
		\begin{equation}\label{eq:ZetaQ}
			\zeta_{Q,\mathfrak{a}}(s,u) := \zeta(2-2u)\sum_{h=1}^\infty \frac{\rho_{\mathfrak{a}}(1-u, -hQ)}{(hQ)^{s + u-\hf}}.
		\end{equation}
		This notation differs from that in \cite{JeffShiftedSum} by a factor of $Q^{-s}$. Finally for notational convenience let us create a notation for the gamma factors:
		\begin{equation}\label{eq:GammaFactors}
			G(s,u) := \hf(4\pi)^k\frac{\Gamma(s+u-1)\Gamma(s-u)\Gamma(1-s)}{\Gamma(u)\Gamma(1-u)\Gamma(s+k-1)}.
		\end{equation}
		Then one obtains, for $\Re (s') > 1$, the expression
		\[
			Z_Q(s,w) = Z_{Q,\text{cusp}}(s,w) + Z_{Q,\text{cts}}(s,w)
		\]
		where 
		\[
			Z_{Q,\text{cusp}}(s, w) =  \sum_{j} L_Q(s',\overline{u_j}) \langle u_j , V\rangle G(s, \tfrac12 + it_j)
		\]
		and
		\begin{align*}
			Z_{Q,\text{cts}}&(s,\mspace{-2mu}w) \mspace{-3mu}=  \frac{1}{2\pi} \!\sum_{\mathfrak{a} \text{ cusp}} \int_{(\hf)}\zeta_{Q,\mathfrak{a}}(s',u) \langle E_{\mathfrak{a}}^*(*,u),V\rangle  \frac{G(s,u)}{\zeta^*(2-2u)\zeta^*(2u)}  \d u\\
			+&\!\! \sum_{\mathfrak {a} \text{ cusp}} \mspace{-8mu}\sum_{r = 1}^{\lfloor \sigma - \hf\rfloor}\mspace{-6mu}\sum_{h=1}^\infty\mspace{-4mu} \frac{\overline{\rho_{\mathfrak{a}}(\mspace{-2mu}1\mspace{-4mu}-\mspace{-4mu}s\mspace{-4mu}-\mspace{-4mu}r,\mspace{-3mu}-\mspace{-2mu}h)}(-\!1)^r\Gamma(1\mspace{-4mu}-\mspace{-4mu}s) \Gamma(\mspace{-2mu}2s\mspace{-4mu} + \mspace{-4mu}r\mspace{-4mu} - \mspace{-4mu} 1 \mspace{-2mu}) }{h^{2s+r+w+(k-3)/2}r!\Gamma(s\!+\!r)\Gamma(1\!-\!s\!-\!r)^2} \frac{\langle E_{\mathfrak{a}}(*,\mspace{-3mu}1\mspace{-3mu}-\mspace{-3mu}s\mspace{-3mu}-\mspace{-3mu}r), \mspace{-2mu}V\rangle}{\Gamma(s+k-1)}.
		\end{align*}
		
		Let us denote the residue of $\zeta_{Q,\mathfrak a}$ as follows,
		\[
			\Res_{u = \hf \pm(1-s)} \zeta_{Q,\mathfrak a}(s,u) = K^{\pm}_{Q,\mathfrak{a}}(s) \zeta(2s-1)
		\] 
		where $K^{\pm}_{Q,\mathfrak{a}}(s)$ is some ratio of Dirichlet polynomials.
		
		\begin{theorem}\label{thm:ZQanalCont}
			With the notations above, the double Dirichlet series $Z_Q(s,w)$ can be meromorphically continued to $\Re(w)>1$, with simple polar lines at: $$s = \hf \pm it_j -r$$ for $r$ a nonnegative integer and $\tfrac14 + t_j^2$ eigenvalues of Maass forms on $\Gamma_0(N) \backslash \H$, $$s = \hf - r$$ for $r$ a nonnegative integer, $$w+ 2s + \frac k2 = \frac 52 -r $$ for a nonnegative integer $r$. 
			
			If $t_j=0$ for some $j$, i.e. if $\frac 14$ is an eigenvalue of $\Delta$ on $\Gamma_0(N)\backslash \H$, then the poles at $s = \hf \pm it_j - r$ will be double poles. 
			
			Furthermore there are poles at $$s = \frac \rho2 -r$$ where $\rho$ is  a nontrivial zero of $\zeta(s)$ with the same order as the pole of $\zeta(2s + 2r)\inv$. 
			  
			The residues at these poles are given by
			\[
				\Res_{s = \hf \pm it_j -r} Z_Q(s,w) = c_{r,j}L_Q(w + \tfrac{k-1}2 - r \pm it_j, \overline{u_j}),
			\]  
			where 
			\[
				c_{r,j} = \frac{(-1)^r}{r!}\frac{\pi^{\hf -it_j + r}}{2^{2it_j - 2r}} \frac{ \Gamma( 2it_j -r)\Gamma(\hf-it_j +r)}{\Gamma(\hf+it_j)\Gamma(\hf-it_j )\Gamma(k - r - \hf + it_j)}\langle u_j,V \rangle,
			\]
			and
			\begin{align*}
				&\Res_{w= \frac 52 -2s-\frac k2- r}\mspace{-23mu}Z_Q(s,w)\mspace{-3mu} =\mspace{-3mu} \frac{(4\pi)^k}{2\Gamma(s+k-1)}\sum_{\mathfrak{a}}\lt[\frac{\mathcal{V}(-1)^rG(s,s+r)}{r!\pi^{s+r-1}\zeta^*(2s+2r)\Gamma(1-s-r)}\rt.\\
				& \qquad\lt.\times\mspace{-3mu}\lt(K_{Q,\mathfrak a}^+(\tfrac 32 \mspace{-3mu}-\mspace{-3mu}s \mspace{-3mu}- \mspace{-3mu}r)\langle E_{\mathfrak{a}}^*(*,s\mspace{-3mu}+\mspace{-3mu}r),V \rangle\mspace{-5mu} -\mspace{-5mu} K_{Q,\mathfrak{a}}^- (\tfrac 32 \mspace{-3mu}-\mspace{-3mu} s\mspace{-3mu}-\mspace{-3mu} r)\langle E^*_{\mathfrak{a}}(*,1\mspace{-3mu}-\mspace{-3mu}s\mspace{-3mu}-\mspace{-3mu}r),V \rangle \rt)\rt]\mspace{-5mu}.
			\end{align*}
			The function also satisfies the following bounds. There are constants $A', A''$ depending only on $k$ such that for $\Re(s') \geq \hf$ and $\Re(w) >1$
			\begin{equation}\label{eq:ZQbound}
				Z_{Q,\text{cusp}}(s,w) \ll Q^{\theta - s'}L^{1-k+\vep} (1+|s'|)^{1+\vep}(1+|s|)^{3-k-3\sigma}
			\end{equation}
		\end{theorem}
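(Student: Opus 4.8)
The starting point is the spectral expansion of $D^*_{f,g}(s;h)$ furnished by Theorem \ref{thm:AnalyticContinuation}, valid in the half-plane $\Re(s)<-\tfrac14-k$ (with $\Re(s)$ not a half-integer), together with the definition $Z_{Q}(s,w)=\sum_{h\ge 1}D(s;hQ)(hQ)^{-w-\frac{k-1}{2}}$ from Definition \ref{def:ZQ}. First I would fix such an $s$, choose $\Re(w)$ large, insert the three-part formula of Theorem \ref{thm:AnalyticContinuation} into the sum over $h$, and interchange the $h$-summation with the sum over the discrete spectrum, the integral over the continuous spectrum, and the finite residual sum. The $h$-sums then collapse into $L_Q(\,\cdot\,,\overline{u_j})$ and $\zeta_{Q,\mathfrak a}(\,\cdot\,,u)$ of \eqref{eq:LQ}--\eqref{eq:ZetaQ} and a third Dirichlet series built from $\overline{\rho_{\mathfrak a}(1-s-r,-h)}$. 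The interchange is legitimate because of the exponential decay of $\langle u_j,V\rangle$ in $t_j$ (Proposition \ref{prop:tripleInnerBound}), the exponential decay of $\langle E_{\mathfrak a}(*,\tfrac12+it),V\rangle$ in $t$, and the bound \eqref{eq:MgoodBound}; this is exactly where $\Re(s)<-\tfrac14-k$ enters, since the polynomial decay rate in \eqref{eq:MgoodBound} improves as $\Re(s)\to-\infty$ and must beat the $T^{4k+2}$ of Proposition \ref{prop:tripleInnerBound}. This yields $Z_Q=Z_{Q,\text{cusp}}+Z_{Q,\text{cts}}$ with the displayed formulas, for $\Re(s')>1$ and $\Re(w)$ large, where $s'=s-\hf+w+\tfrac{k-1}{2}$ and $G(s,u)$ is as in \eqref{eq:GammaFactors}.

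Next I would continue each summand in $s$ with $\Re(w)>1$ fixed. Since $u_j$ is a cusp form, $L_Q(s',\overline{u_j})$ is entire in $s'$, so $Z_{Q,\text{cusp}}$ inherits its poles only from $G(s,\tfrac12+it_j)$: the factor $\Gamma(s-u)=\Gamma(s-\hf-it_j)$ gives the polar lines $s=\hf+it_j-r$ and $\Gamma(s+u-1)=\Gamma(s-\hf+it_j)$ gives $s=\hf-it_j-r$ for $r\in\N$, coinciding into a double pole $\Gamma(s-\hf)^2$ when $t_j=0$. The residue is then obtained by taking the residue $(-1)^r/r!$ of the relevant Gamma factor, evaluating the remaining factors of $G$ and the argument $s'=w+\tfrac{k-1}{2}-r\pm it_j$ at the pole, and reading off $c_{r,j}$, which produces $\Res_{s=\hf\pm it_j-r}Z_Q(s,w)=c_{r,j}L_Q(w+\tfrac{k-1}{2}-r\pm it_j,\overline{u_j})$. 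For the continuous part I would shift the $u$-contour in $Z_{Q,\text{cts}}$ across the poles of $\zeta_{Q,\mathfrak a}(s',u)$ at $u=\tfrac12\pm(1-s')$ (residue $K^{\pm}_{Q,\mathfrak a}(s')\zeta(2s'-1)$) and across the poles of $G(s,u)$ at $u=s+r$ and $u=1-s-r$; these residual terms, together with the zeros of $\zeta^*(2u)$ and $\zeta^*(2-2u)$ in the denominator of the integrand, account for the polar line $s=\hf-r$, the lines $w+2s+\tfrac k2=\tfrac52-r$, and the lines $s=\tfrac\rho2-r$ at nontrivial zeros $\rho$ of $\zeta$. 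The residue in $w$ is assembled the same way from the Eisenstein periods $\langle E^*_{\mathfrak a}(*,s+r),V\rangle$, $\langle E^*_{\mathfrak a}(*,1-s-r),V\rangle$ and the functions $K^{\pm}_{Q,\mathfrak a}$, giving the stated formula; the third (finite) sum is entire in $w$ for $\Re(w)>1$ and contributes nothing new.

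The bound \eqref{eq:ZQbound} is proved in the region $\Re(s')\ge\hf$, $\Re(w)>1$ by estimating $Z_{Q,\text{cusp}}(s,w)=\sum_j L_Q(s',\overline{u_j})\langle u_j,V\rangle G(s,\tfrac12+it_j)$ termwise and summing dyadically in $|t_j|$. I would use: the standard bounds for $L_Q(s',\overline{u_j})$ on $\Re(s')\ge\hf$ — namely convexity in $\Im s'$, the best bound $\theta$ toward Ramanujan (which, via the restriction of the Dirichlet series to multiples of $Q$, produces the factor $Q^{\theta-s'}$), and the normalization $|\rho_j(1)|\ll(1+|t_j|)^{\vep}e^{\pi|t_j|/2}$; Stirling applied to $G(s,\tfrac12+it_j)$, after which the exponential factors in $t_j$ cancel and only polynomial growth in $|t_j|$ and $|s|$ of the order appearing in \eqref{eq:ZQbound} survives; and Proposition \ref{prop:tripleInnerBound}, $\sum_{T\le|t_j|\le T+1}|\langle u_j,V\rangle|^2\ll\|fy^{k/2}\|_\infty^2\|gy^{k/2}\|_\infty^2 T^{4k+2}e^{-\pi T}$, combined with $\|f(\ell z)y^{k/2}\|_\infty=\ell^{-k/2}\|fy^{k/2}\|_\infty$ and the normalization change \eqref{eq:twoDifferentZs}, which together yield the factor $L^{1-k+\vep}$. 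A Cauchy--Schwarz over each dyadic block $|t_j|\sim T$ makes the $e^{\pi T/2}$ from $\rho_j(1)$ cancel the $e^{-\pi T/2}$ from $\langle u_j,V\rangle$, leaving a geometrically convergent sum in $T$ and the claimed estimate. The main obstacle is essentially bookkeeping: the sup-norm-dependent input of Proposition \ref{prop:tripleInnerBound}, replacing Reznikov's representation-theoretic bound used in \cite{JeffShiftedSum}, forces the shifted region $\Re(s)<-\tfrac14-k$ in which the spectral expansion converges, and one must then track with care which poles (and which residues, involving $K^{\pm}_{Q,\mathfrak a}$ and the $\zeta$-zeros) are crossed when moving the $s$- and $u$-contours back toward the region relevant for the application.
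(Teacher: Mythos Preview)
Your proposal is essentially correct and closely mirrors what the paper does; the paper's own proof is in fact only a short paragraph referring to Proposition~7.1 of \cite{JeffShiftedSum}, so the level of detail you give already exceeds it. The identification of the polar lines from $G(s,\tfrac12+it_j)$, the contour shifts in $u$ producing the Eisenstein-type poles, and your Cauchy--Schwarz/dyadic argument for \eqref{eq:ZQbound} using Proposition~\ref{prop:tripleInnerBound} together with $\|f(\ell\,\cdot)y^{k/2}\|_\infty=\ell^{-k/2}\|fy^{k/2}\|_\infty$ are all in agreement with the paper.

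There is one structural point you gloss over that the paper highlights. You obtain the spectral decomposition $Z_Q=Z_{Q,\text{cusp}}+Z_{Q,\text{cts}}$ only in the left half-plane $\Re(s)<-\tfrac14-k$ (with $\Re(w)$ large), and then say you will ``continue each summand in $s$ with $\Re(w)>1$ fixed.'' For this to furnish the meromorphic continuation of the \emph{same} function $Z_Q$ defined by the Dirichlet series in $\Re(s)>1$, $\Re(w)>1$, you must connect the two regions. The paper does this by invoking Hartogs' theorem: one has holomorphy on the tube over $\{\Re(s)>1,\Re(w)>1\}$ from the Dirichlet series, on the tube over $\{\Re(s)<-\tfrac14-k,\Re(w)>1\}$ from the spectral expansion, and a bridging strip with $\Re(w)$ sufficiently large (where the $h$-sum of the meromorphically continued $D(s;hQ)$ converges absolutely for all $s$); Hartogs then extends to the convex hull. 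Your termwise continuation can be made rigorous instead by proving locally uniform convergence of the $j$-sum on compacta away from the poles, which is exactly what your dyadic estimate delivers, but you should say so explicitly---otherwise the passage from ``each summand continues'' to ``the sum continues and equals $Z_Q$'' is a genuine gap.
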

		
		By Stirling's formula we can see that, 
		\[
			c_{r,j} \ll_{r,k} (1+|t_j|)^{\hf-k+r}\langle u_j, V\rangle.
		\]

		The proof of the theorem is almost verbatim that of Proposition 7.1 in \cite{JeffShiftedSum}. The proof is given using Hartog's theorem of analytic continuation of multivariable functions to convex domains. The original domains are $\Re(s), \Re(w)>1$ from the Dirichlet series, $\Re(s) < \qtr - k, \Re(w)>1$ from the spectral expansion, a region with sufficiently large real part for $w$ connecting these two regions. The only difference in the even weight modular forms is the region of convergence of the spectral expansion which is $\Re(s)< \hf- \frac k2, \Re(w)>1$. 
			
		In \cite{JeffShiftedSum}, the meromorphic continuation of $Z_Q(s,w)$ to all of $\C^2$ is obtained in the case of even integral weight. We will not do the necessary changes in the proof to obtain the same result, because in the next section we will only need the $w$ variable with real part greater than 1.
		

\section{Shifting lines of integration}\label{sec:movingLines}

	The goal of this section is to bound $S_2$ and $S_3$ as in Proposition \ref{prop:boundSWithShiftedSums}. We start with the triple inverse Mellin transform \eqref{eq:tripleMellin} and move the lines of integration. First move $s$ line of integration to $\Re(s) = -2\vep$. In doing that we pass over poles of $Z_Q$ and pick up residues. Estimating each, we obtain the following theorem.
	
	\begin{theorem}\label{S2S3bound}
		Let $H$ be a smooth cutoff function supported in the interval $[1,2]$. Consider the sums,
		\begin{align*}
			S_2 &= \sum_{\substack{m_1,m_2, h>0\\ m_1\ell_1 = m_2\ell_2 + hQ }}A(m_1)\overline{A(m_2)}H\lt(\frac{m_1}{X}\rt)H\lt(\frac{m_2}{X}\rt) \\
			S_3 &= \sum_{\substack{m_1,m_2, h>0\\ m_2\ell_2 = m_1\ell_1 + hQ }} A(m_1)\overline{A(m_2)}H\lt(\frac{m_1}{X}\rt)H\lt(\frac{m_2}{X}\rt)
		\end{align*}
		We can bound the shifted sums $S_2$ and $S_3$ by,
		\[
			S_2, S_3 \ll \frac{XL^{1+\vep}}{\sqrt{Q}},
		\]
		where $\vep>0$ is an arbitrarily small number, and the implied constant depends only on $k$, and the smooth cutoff function $H$.
	\end{theorem}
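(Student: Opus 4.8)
The plan is to start from the triple inverse Mellin representation \eqref{eq:tripleMellin} of $S_2$ and push the contours into the region where the continuation of $Z_Q$ from Theorem \ref{thm:ZQanalCont} and the bound \eqref{eq:ZQbound} are available. Writing $\mathfrak{s} = s+w-u$ and $\mathfrak{w} = u - \tfrac{k-1}{2}$ for the two arguments of $Z_Q$ in \eqref{eq:tripleMellin}, one checks that the parameter $s'$ of Theorem \ref{thm:ZQanalCont} equals $s+w-\hf$, and that on the initial contours $(\Re(s),\Re(w),\Re(u)) = (\tfrac{k+1}{2}, 1+2\vep, \tfrac{k+1}{2}+\vep)$ one has $\Re(s')=\tfrac k2+1+2\vep>1$, $\Re(\mathfrak{w})=1+\vep>1$, so everything converges absolutely. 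First I would move the $s$-line leftward to $\Re(s)=-2\vep$. Since $\Re(s')=\Re(s)+\hf+2\vep$ with $\Re(w)$ fixed, this is precisely the vertical line on which $\Re(s')=\hf$, i.e. the left-most line on which \eqref{eq:ZQbound} still controls $Z_{Q,\text{cusp}}$; one cannot push further. In each residue picked up along the way I would then also move the $u$-line down past the pole of $\Gamma(u)$ at $u=0$, because once $s$ is pinned by a residue the factor $X^{s+w}$ is governed by $\Re(u)$ and would otherwise be far too large.

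In shifting $s$ one crosses the polar lines of $Z_Q$ listed in Theorem \ref{thm:ZQanalCont}. For the cuspidal lines $\mathfrak{s}=\hf\pm it_j-r$ one inserts the residue formula $\Res_{\mathfrak{s}=\hf\pm it_j-r}Z_Q = c_{r,j}L_Q(w+\tfrac{k-1}{2}-r\pm it_j,\overline{u_j})$, bounds $c_{r,j}\ll (1+|t_j|)^{\hf-k+r}\langle u_j,V\rangle$ by Stirling, bounds $L_Q(\,\cdot\,,\overline{u_j})$ by the convexity bound for the analytically continued $L$-function of $u_j$ after using the Hecke relation to extract $\rho_j(Q)\ll Q^{\theta+\vep}$ — this is exactly where $\theta$ enters the argument — and sums over $j$: the $j$-sum converges because the leftover $w$-integral $\int h(\hf\pm it_j-w)h(w)\,dw$ decays faster than any power of $|t_j|$, which together with $\sum_{T\le|t_j|\le T+1}|\langle u_j,V\rangle|^2\ll T^{4k+2}e^{-\pi T}$ from Proposition \ref{prop:tripleInnerBound} defeats all the polynomial growth (the dominant term being the $r=0$, $u=0$ residue, of size $X^{\hf}$ in the $X$-aspect). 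The diagonal line $\mathfrak{w}+2\mathfrak{s}+\tfrac k2=\tfrac52-r$ yields the Eisenstein main term, handled via the explicit $\Res_{w=\cdots}Z_Q$ formula; its $Q$-dependence is carried by $K^{\pm}_{Q,\mathfrak{a}}$ and the $(hQ)^{-s'}$ normalization of $\zeta_{Q,\mathfrak{a}}$ and is a clean $Q^{-\hf+\vep}$ with no $\theta$. The lines $\mathfrak{s}=\hf-r$ (at worst double poles, governed by the $t=0$ Laurent data of $M$ in Proposition \ref{prop:Mfunction}) and $\mathfrak{s}=\tfrac\rho2-r$ with $\rho$ a zero of $\zeta$ (carrying $1/\zeta$-factors, bounded unconditionally, and $X$-powers below $X^{1/4}$) are harmless, as is the continuous-spectrum piece $Z_{Q,\text{cts}}$ of $Z_Q$, treated by the same explicit computation.

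It remains to estimate the shifted integral over $\Re(s)=-2\vep$. There $\Re(s')=\hf$ and $\Re(\mathfrak{w})=\Re(u)-\tfrac{k-1}{2}=1+\vep>1$, so \eqref{eq:ZQbound} applies and gives $Z_{Q,\text{cusp}}\ll Q^{\theta-\hf}L^{1-k+\vep}$ times a polynomial in the imaginary parts of $s,w,u$. Multiplying by $|\ell_1^{w}\ell_2^{s}|\ll L^{1+\vep}$ and $|X^{s+w}|\ll X$, and using that the $w$- and $u$-integrals converge absolutely thanks to the exponential decay of $\Gamma(w+\tfrac k2-u)/\Gamma(w+\tfrac k2)$ and $\Gamma(u)$ and the rapid decay of $h(s),h(w)$, this piece is $\ll XL^{2-k+\vep}Q^{\theta-\hf}$; the $Z_{Q,\text{cts}}$ contribution along the same line is bounded identically using the explicit Eisenstein Fourier coefficients. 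Collecting all residue contributions with the shifted integral gives $S_2\ll XL^{1+\vep}Q^{-\hf}$, the cuspidal parts carrying the extra $Q^{\theta}$ that propagates to the exponent $\tfrac38+\tfrac\theta4$ of Theorem \ref{thm:main}; and $S_3$ follows by the conjugate symmetry of the construction (equivalently, $\ell_1\leftrightarrow\ell_2$ together with complex conjugation).

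\textbf{The main obstacle.} The real difficulty is uniformity: to justify each contour shift one needs polynomial-in-imaginary-part control of $Z_Q$ and of every one of its residues along the horizontal segments joining the old and new contours and along the new vertical lines, with estimates stable under the $\delta\to0$ limit built into the construction of $Z_Q$; the bound \eqref{eq:ZQbound} is the crucial input, and its restriction to $\Re(s')\ge\hf$ is exactly what fixes the stopping line at $\Re(s)=-2\vep$ and hence determines the final exponent. The second delicate point is the infinite sum over Maass eigenvalues in the cuspidal residues, which converges only through a careful balance between the polynomial growth of $c_{r,j}$ and of the convexity bound for $L_Q(\,\cdot\,,\overline{u_j})$ — including the $e^{\pi|t_j|/2}$ from the normalization of $\rho_j(1)$ — and the rapid decay furnished partly by the $w$-integral of $h$ and partly by the exponential decay in Proposition \ref{prop:tripleInnerBound}. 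Making this trade-off explicit, with the correct powers of $L$ and $Q$, is the technical heart of the estimate.
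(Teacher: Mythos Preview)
Your strategy is the paper's: start from \eqref{eq:tripleMellin}, shift contours, collect the cuspidal residues of $Z_Q$, and bound the remaining integral via \eqref{eq:ZQbound} together with Proposition~\ref{prop:tripleInnerBound}. Two points of execution differ from the paper, one cosmetic and one genuinely wrong.

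First, the paper makes the substitution $(s,w,u)\mapsto(\mathfrak{s},\mathfrak{w},w)$ so that $Z_Q$ appears as $Z_Q(s,u)$, and then pushes the new $s$-line all the way to $\Re(s)=-k-\tfrac14-\vep$, inside the region where the spectral expansion (hence the splitting $Z_Q=Z_{Q,\text{cusp}}+Z_{Q,\text{cts}}$) is literally valid. Your stopping line $\Re(s)=-2\vep$ corresponds to $\Re(\mathfrak{s})=(1-k)/2-\vep$, which is \emph{not} in that region; you would have to justify that the cusp/cts decomposition and \eqref{eq:ZQbound} continue analytically to your line before invoking them. The paper avoids this by moving deeper and then, for the remaining triple integral, moving $w$ past $u$ (picking up the $\Gamma(w-u)$ pole) and then $u$ to the right until $\Re(s')=\tfrac12$.

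Second, your residue analysis is off. After picking up the pole at $\mathfrak{s}=\tfrac12\pm it_j-r$, the $L_Q$-argument is $u-r\pm it_j$ and the $X$-power is $X^{\frac12\pm it_j-r+u}$. To place $L_Q$ on its critical line one shifts $\Re(u)$ to $\tfrac12+r\ge \tfrac12$, so one never crosses the $\Gamma(u)$ pole at $u=0$; and on that line the $X$-exponent has real part exactly $1$, not $\tfrac12$. The ``$r=0$, $u=0$ residue of size $X^{1/2}$'' you single out does not occur. The correct residue contribution is $\ll (XL)^{1+\vep}Q^{\theta-1/2}L^{1-k}$, obtained (as in the paper) via Cauchy--Schwarz over $T\le|t_j|\le 2T$ using on one side a second-moment bound for $L_Q$ and on the other side Corollary~\ref{cor:myMainContribution}; the rapid decay of $h(\cdot)$ then dispatches the $T$-sum. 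Your identification of the ``main obstacle'' is accurate, but the balance is struck by this Cauchy--Schwarz, not by pushing $u$ through $0$.
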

	
	\begin{proof}
		The sums $S_2$ and $S_3$ are symmetric, and in fact just complex conjugates of one another, so it is enough to bound only $S_2$. We will use \eqref{eq:tripleMellin} for dominating this shifted sum. Let us first deal with the cuspidal contribution. Define,
		\begin{align*}
			S_{2,\text{cusp}}^* &:= \lt(\frac{1}{2\pi i}\rt)^3 \int\limits_{(\frac{k+1}{2})}\int\limits_{(1+2\vep)} \int\limits_{(\frac{k+1}{2}+\vep)} Z_{Q,\text{cusp}}\lt(s+w-u,u-\tfrac{k-1}{2}\rt)\notag \\
			&\aquad \qquad\times\frac{\Gamma\lt(w+\frac{k-1}{2}-u\rt)\Gamma(u)}{\Gamma\lt(w+\frac{k-1}{2}\rt)} h(s)h(w) \ell_2^s \ell_1^w X^{s+w} \du \dw \ds \displaybreak[1]\\
			&= \lt(\frac{1}{2\pi i}\rt)^3 \int\limits_{(1+\vep)}\int\limits_{(1+2\vep)} \int\limits_{(1+\vep)} Z_{Q,\text{cusp}}\lt(s,u\rt) \frac{\Gamma\lt(w-u\rt)\Gamma(u+\tfrac{k-1}{2})}{\Gamma\lt(w+\frac{k-1}{2}\rt)} \\
			&\aquad\qquad\times h(s-w+u+\tfrac{k-1}{2})h(w)  \lt(\tfrac{\ell_1}{\ell_2}\rt)^w (\ell_2X)^{s+u+\frac{k-1}2} \du \dw \ds,
		\end{align*}
		where we have made a change of variables in the second line. Making the obvious definition for $Z_{Q,\text{cts}}$ and noting the difference in \eqref{eq:twoDifferentZs} we realize that
		\[
			S_2 = (\ell_1\ell_2)^{\frac{k-1}{2}}\lt(S_{2,\text{cusp}}^* + S_{2,\text{cts}}^*\rt)
		\]
		Now move the new $s$ line of integration all the way to $\Re(s) = -k- \qtr - \vep$, in the way passing over the poles at $s = \hf + it_j - r$ for all $r = 0,1, \ldots, k - \tfrac 12$. 
		\begin{align}
			S_{2,\text{cusp}}^* =& \lt(\frac 1{2\pi i }\rt)^3 \int\limits_{(-k - \qtr - \vep)}\int\limits_{(1+2\vep)} \int\limits_{(1+\vep)} Z_Q\lt(s,u\rt) \frac{\Gamma\lt(w-u\rt)\Gamma(u+\tfrac{k-1}{2})}{\Gamma\lt(w+\frac{k-1}{2}\rt)}\notag \\[-1ex]
			&\quad\times h(s-w+u+\tfrac{k-1}{2})h(w)  \lt(\tfrac{\ell_1}{\ell_2}\rt)^w (\ell_2X)^{s+u+\frac{k-1}2} \du \dw \ds\label{eq:S2cuspidalRemainder}\\
			&+ \sum_{r=0}^{k-\hf}\sum_j\lt(\frac{1}{2\pi i}\rt)^2\mspace{-10mu} \int\limits_{(1+2\vep)} \int\limits_{(1+\vep)}  c_{r,j}L_Q(u + \tfrac{k-1}2 - r + it_j, \overline{u_j})\Gamma\lt(w-u\rt) \notag\\[-1ex]
			&\quad \times\frac{\Gamma(u+\tfrac{k-1}{2})}{\Gamma\lt(w\mspace{-3mu}+\mspace{-3mu}\frac{k-1}{2}\rt)} h(u\mspace{-3mu} - \mspace{-3mu} w \mspace{-3mu}+ \mspace{-3mu} \tfrac{k}{2} \mspace{-3mu}+\mspace{-3mu} it_j)h(w)  \mspace{-5mu} \lt(\tfrac{\ell_1}{\ell_2}\rt)^w \mspace{-7mu} (\ell_2X)^{u-r+\frac{k}2 + it_j} \du \dw.\label{eq:S2cuspidalResidue}
		\end{align}
		We will bound both the summands \eqref{eq:S2cuspidalRemainder} and \eqref{eq:S2cuspidalResidue}, which we call $I$ and $R$ respectively. In the $u$ integral of $R$ shift the line of integration so that $\Re(u + (k-1)/2 - r + it_j) = 1/2$. Notice that the power of $(\ell_2X)$ has real part equal to $1$. Furthermore the gamma factors, and the rapidly decaying functions $h$ ensure the convergence of the integrals in the $u$ and the $w$ variables and also the summation over $j$. We only need to bound the following quantity, 
		\[
			C(T,s) = \sum_{T\leq |t_j| \leq 2T} c_{r,j}L_Q(s, \overline{u_j}),
		\]
		with a reasonable growth in the imaginary part of $s$ and in $T$, and explicit growth in the $L$ and the $Q$ parameters. Now apply Cauchy-Schwartz inequality.
		\begin{align*}
			C(T,s) &\leq T^{\hf - k + r}\lt(\mspace{-3mu}\sum_{T\leq |t_j| \leq 2T} \mspace{-7mu}\lt|L_Q(s, \overline{u_j})\rt|^2 e^{-\pi|t_j|} \mspace{-5mu}\rt)^{\mspace{-5mu}\hf}\mspace{-5mu} \lt(\sum_{T\leq |t_j| \leq 2T} \mspace{-7mu}\lt|\langle u_j, V\rangle \rt|^2e^{\pi |t_j|} \rt)^{\mspace{-5mu}\hf}\\
			&\ll T^{\hf - k + r} Q^{\theta - s} L^{1+\vep }(1+|s|+T)^{1+\vep}L^{-k}T^{2k+\frac 32} 	\\
			&\ll T^{k+r + 3}(1+|s|)^{1+\vep}Q^{\theta - s}L^{1-k + \vep}, 
		\end{align*}
		where for the first inequality we have used (7.15) of \cite{JeffShiftedSum} and for the second one, Corollary \ref{cor:myMainContribution}. 
		
		We get the bound,
		\begin{equation}
			R \ll \frac{L^{1-k}(LX)^{1+\vep}}{\sqrt{Q}}.
		\end{equation}
		
		We now bound \eqref{eq:S2cuspidalRemainder}. Move the line of integration in $w$ past that of $u$, picking up a pole at $w = u$, and then move the $u$ line of integration to the right, so that $\Re(s') = \Re(s + u + k/2-1) = 1/2$, in both the integral and the residue. The integral can be bounded:
		\begin{align*}
			I =& \lt(\frac{1}{2\pi i}\rt)^2\mspace{-14mu}\int\limits_{(-k - \qtr - \vep)}\int\limits_{(\frac{k}{2}+\frac 74 + \vep)} \mspace{-5mu} Z_Q(s,u) h(s \mspace{-3mu}+\mspace{-3mu}  \tfrac{k-1}2) h(u) \lt(\frac{\ell_1}{\ell_2}\rt)^{\mspace{-4mu}u}\mspace{-4mu} \lt(\ell_2X\rt)^{s+u + \frac {k-1}2} \d u \d s\\
			&+ \lt(\frac{1}{2\pi i}\rt)^3 \int_{(-k - \qtr - \vep)}\int_{(1+\frac{\vep}{2})} \int_{(\frac{k}{2}+\frac 74 + \vep)} Z_Q\lt(s,u\rt) \frac{\Gamma\lt(w-u\rt)\Gamma(u+\tfrac{k-1}{2})}{\Gamma\lt(w+\frac{k-1}{2}\rt)} \\
			&\aquad\qquad\times h(s-w+u+\tfrac{k-1}{2})h(w)  \lt(\tfrac{\ell_1}{\ell_2}\rt)^w (\ell_2X)^{s+u+\frac{k-1}2} \du \dw \ds.
		\end{align*} 
		The integrals converge due to the fast vanishing of the $h$ and the gamma functions vertically, and we can use the bound \eqref{eq:ZQbound} in Theorem \ref{thm:ZQanalCont} to obtain the same bound for $I$ as for $R$. This gives that
		\[
			S_{2,\text{cusp}}^* \ll \frac{L^{1-k+\vep}(LX)^{1+\vep}Q^{\theta}}{\sqrt{Q}}  
		\]
		
		The bound for $S_{2,\text{cts}}^*$ is proven the same way as in \cite{JeffShiftedSum}, no new input such as Corollary \ref{cor:myMainContribution} is required as was the case in the discrete spectrum. We therefore refer the reader there. Combining it all together we can bound the shifted sum $S_2$ (and similarly also $S_3$),
		\begin{equation*}
			S_2,S_3\ll \frac{(XL)^{1+\vep}Q^{\theta}}{\sqrt{Q}}.
		\end{equation*}
	\end{proof}


\section{Automorphic Kernel} \label{sec:AutomorphicKernel}

	The rest of the paper is concerned with the estimation of the quantity 
	\[
		\langle f(\ell_1z)\overline{f(\ell_2z)} y^k, u_j(z)\rangle,
	\]
	as a function of the $\ell$'s and the eigenvalue of the Maass form $u_j$ which equals $\frac 14 + t_j^2$. We will realize the inner product, more precisely some smoothed sum over the inner products as a pairing of $V(z) = f(\ell_1z) \overline{f(\ell_2z)}y^{k}$ with an automorphic kernel. Let $k(z,z')= k(u(z,z'))$ be a point pair invariant, that is let it only depend on the hyperbolic distance between the two points $z,z' \in \H$. Here $$u(z,z') = \frac{|z-z'|^2}{4yy'}$$ is a convenient parameter for hyperbolic distance, where the actual distance is related to $u$ via the equation, $$\cosh d_\H(z,z') = 2u(z,z') + 1 .$$ Let $\Gamma = \Gamma_0(N)$, we define the automorphic kernel related to this point pair invariant, and to $\Gamma$ as $$K(z,z') = \sum_{\gamma \in \Gamma} k(z,\gamma z').$$ 
	
	The function $K$ has some remarkable spectral properties, if $F$ is any eigenfunction of the hyperbolic Laplacian, and is invariant under $\gamma \in \Gamma$, then the integral operator defined by $K$ has $F$ as an eigenfunction. More precisely, if $\Delta F (z) = \lt(\frac 14 + t^2\rt)F(z)$, then
	\[
		\frac1{\Vol (\Gamma\backslash \H)}\iint_{\Gamma \backslash\H} K(z,z')F(z) \dz  = h(t)F(z).
	\]
	for some function $h$, determined only by the function $k$. See \cite{IwaniecSpectralBook} section 1.8 for details. In fact one may obtain $h$, given $k$ using a three step transformation called the Harish-Chandra--Selberg transform. This transform can also be seen as the Fourier transform on the hyperbolic plane, restricted to radial functions, see \cite{HelgasonGGA}. The exact form of the transform follows the steps,
	\begin{align*}
		q(v) &= \frac1 \pi \int_v^\infty \frac {k(u) \du}{\sqrt{u-v}} = \frac1\pi \int_{-\infty}^\infty k(v+w^2)\dw,\\
		g(r) &= 2 q\lt(\sinh^2\lt(\frac r2\rt)\rt),\\
		h(t) &=  \int_{-\infty}^\infty g(r) e^{irt}\dr. 
	\end{align*}
	One can also obtain this transformation with a single step:
	\[
		h(t) = \int_1^\infty k(u) P_{-\frac 12+it}(u) \du,
	\]
	where $P_{\nu}(u)$ is the Legendre $P$ function of order $\nu$. Furthermore, this transformation may be inverted by the following two (equivalent) sets of equations. Either one performs the inversion of the three steps one by one, first inverting the Fourier transform on the real line, then the change of variables and then the Abel transform,
	\begin{align*}
		g(\xi) &= \frac{1}{2\pi} \int_{-\infty}^\infty h(t)e^{-it\xi} \dt,\\
		q\lt(\frac{e^\xi + e^{-\xi} -2}4\rt) &= \frac 12 g(\xi),\\
		k(u) &= -\frac{1}{\pi} \int_v^\infty \frac{q'(v)\dv}{\sqrt{v-u}};
	\end{align*} 
	or one just inverts the single step transform using the Legendre $P$ function,
	\[
		k(u) = \frac1{2\pi}\int_{-\infty}^\infty P_{-\hf+it}(\cosh u) h(t) t\tanh (\pi t) \dt.
	\]
	
	The spectral expansion of the automorphic kernel can be obtained via its property against eigenfunctions of the hyperbolic Laplacian,
	\begin{equation*}
		K(z,z') = \sum_{t_j} h(t_j) u_j(z)\overline{u_j(z')} + \frac{1}{4\pi}\sum_{\mathfrak{a} \text{ cusp}} \int_{-\infty}^{\infty} h(t) E_{\mathfrak{a}}(z,\tfrac12+it)E_{\mathfrak{a}}(z,\tfrac12 - it)\dt.
	\end{equation*}
	Here the first sum is over all the Maass forms of level $N$, and the sum before the integral is over cusps of $\Gamma \backslash \H$. Furthermore, $E_\mathfrak{a}$ denotes the Eisenstein series of level $N$, associated to the cusp $\mathfrak{a}$.

	We can obtain a smoothed sum of inner products by pairing the automorphic kernel with the $\Gamma$-invariant function $f_1(z) \overline{f_2(z)} y^k$. Using the spectral decomposition of $K$ as above, and calling $\mathcal{V} = \Vol(\Gamma\backslash \H)$,
	\begin{align}
		\langle f_1(z)&\overline{f_2(z)}y^k, K(z,z')\rangle = \sum_{t_j}\frac{1}{\mathcal V} \iint_{\Gamma\backslash \H}f_1(z)\overline{f_2(z)}y^k \overline{h(t_j)}\overline{u_j(z)} u_j(z') \dz \notag\\
		&+ \frac1{4\pi} \sum_\mathfrak{a} \frac{1}{\mathcal{V}} \iint\limits_{\Gamma\backslash \H} f_1(z)\overline{f_2(z)}y^k\int\limits_{-\infty}^\infty h(t) E_\mathfrak{a}(z,\tfrac12 -it)E_\mathfrak{a}(z',\tfrac 12 + it) \dt \dz\notag\\
		&= \sum_{t_j} h(t_j) \langle f_1(z)\overline{f_2(z)}y^k, u_j(z)\rangle u_j(z') \notag\\
		&+ \frac{1}{4\pi} \sum_\mathfrak{a} \int\limits_{-\infty}^{\infty} h(t) \langle f_1(z)\overline{f_2(z)}y^k, E_{\mathfrak{a}}(z,\tfrac12+it)\rangle E_{\mathfrak{a}}(z',\tfrac12+it)\dt \notag.
	\end{align}
	
	Now we take the $L^2$ norm of both sides with respect to the $z' \in \Gamma\backslash \H$ variable. Using Plancharel's theorem we have,
	\begin{align}
		\sum_{t_j} &|h(t_j)|^2 |\langle f_1(z)\overline{f_2(z)}y^k, u_j(z)\rangle|^2\label{eq:tripleProdSmooth}\\
		&\leq \sum_{t_j} |h(t_j)|^2 |\langle f_1\overline{f_2}y^k, u_j(z)\rangle|^2\mspace{-4mu}+\mspace{-4mu} \frac{\mathcal{V}}{4\pi }\sum_{\mathfrak a} \int\limits_{-\infty}^\infty \mspace{-4mu} |h(t)|^2|\langle f_1\overline{f_2}y^k E_{\mathfrak{a}}(*,\tfrac12\mspace{-3mu}+\mspace{-3mu}it)\rangle|^2 \dt \notag\\
		&= \| \mathcal{V}\langle f_1(z)\overline{f_2(z)}y^k, K(z,z')\rangle_{z}\|^2_{L^2(z'\in \Gamma\backslash \H)}\notag\\
		&\leq \lt(\sup_{z' \in \H} \lt|\mathcal{V} \langle f_1(z)\overline{f_2(z)}y^k, K(z,z')\rangle_{z}\rt|\rt)^2. \notag
	\end{align}
	
	We will use the function $h$ in the sum $\eqref{eq:tripleProdSmooth}$ as a localizing bump function which will concentrate the sum near some given magnitude $T$, and then estimating the size of the inner products will be a matter of bounding
	\begin{equation}
		\mathcal{V}\langle f_1(z)\overline{f_2(z)}y^k, K(z,z')\rangle_{z} \label{Boundee}.
	\end{equation}


\section{Hyperbolic Disc Model}\label{sec:hyperbolicDisc}

	Let us pass to the disc model of the hyperbolic plane which we will denote by $\D$, with $z'$ sent to the center, via the new coordinates
	\[
		w = \frac{z -z'}{z-\overline {z'}}.
	\]
		Let $\phi_1, \phi_2$ be $f_1$ and $f_2$ expressed in the $w$ coordinates, i.e.
	\[
		\phi_i (w) = f_i(z) = f_i\lt(\frac{-\overline {z'}w + z'}{-w+1}\rt) \qquad (i=1,2).
	\]
		Let $w = \rho e^{i\theta}$ be polar coordinates, and we will denote by $r(w)$ the hyperbolic distance from origin to the point $w$, in this case $$\rho= \tanh\lt(\frac r2\rt).$$ A calculation shows that if $y'$ denotes the imaginary part of the point $z'$, the function $y^s$ on the upper half plane will transform to the function
	\[
		\lt(\frac{1-|w|^2}{|1-w|^2}\rt)^sy'^s.
	\]
	
	We unfold the integral in \eqref{Boundee} and pass to the hyperbolic disc.
	\begin{align}
		\mathcal V\langle &f_1(z) \overline{f_2(z)}y^k, K(z,z')\rangle = \iint_\H f_1(z) \overline{f_2(z)}y^k k(z,z') \dz\notag\\
		&=\iint_\D \phi_1(w)\overline{\phi_2(w)} \lt(\frac{1-|w|^2}{|1-w|^2}\rt)^k y'^k k\lt(\frac{|w|^2}{1-|w|^2}\rt) 4\frac{\mathrm{d}w_x \mathrm{d}w_y}{(1-|w|^2)^2}\notag\\
		&= \frac 1{2\pi} \int_0^1 \int_0^{2\pi} \phi_1(\rho e^{i\theta}) \overline{\phi_2(\rho e^{i\theta})}\lt(\frac {1-\rho^2}{|1-\rho e^{i\theta}|^2}\rt)^k y'^k k\lt(\frac{\rho^2}{1-\rho^2}\rt) \frac{4\rho \mathrm d \rho}{(1-\rho^2)^2} \dth. \label{eq:discIntegral}
	\end{align}
	Let us first integrate over $\theta$, for that we use the Taylor expansions of the holomorphic functions $\phi_1$ and $\phi_2$,
	\[
		\phi_1(w) = \sum_{n =0}^\infty a_nw^n, \qquad \phi_2(w) = \sum_{m = 0}^\infty b_m w^m.
	\]
	Then the integral with respect to $\theta$ in \eqref{eq:discIntegral} gives us,
	\begin{align}
		B(\rho):=\frac{1}{2\pi}\int_{0}^{2\pi}& \lt(\sum_{n=0}^\infty a_n \rho^n e^{in\theta}\rt) \lt(\sum_{m=0}^\infty \overline{b_m}\rho^m e^{-im\theta}\rt) \lt(\frac{1-\rho^2}{|1-\rho e^{i\theta}|^2}\rt)^k \dth\notag\\
		&=\sum_{n,m=0}^\infty a_n \overline{b_m} \rho^{n+m} \frac{1}{2\pi}\int_0^{2\pi} e^{i(n-m)\theta} \lt(\frac{1-\rho^2}{|1-\rho e^{i\theta}|^2}\rt)^k \dth. \label{eq:powerPoisson}
	\end{align}
	The inner integral can be evaluated exactly, and as can be seen from \cite{ErdelyiHighTransFuncI}, on page 81, equation 10,
	\[
		\frac{1}{2\pi} \int_0^{2\pi} \lt(\frac{1-\rho^2}{|1-\rho e^{i\theta}|^2}\rt)^k \dth = (1-\rho^2)^k \rho^{|h|} \frac{\Gamma(k+h)}{\Gamma(k)\Gamma(|h|)} F(k,k+|h|;|h|+1; \rho^2)
	\]
	with $F = {}_2 F_1$ being the Gauss hypergeometric function. Then we can evaluate our theta integral \eqref{eq:powerPoisson} to be, 
	\begin{align}
		B(\rho) &= \underbrace{\sum_{h=0}^\infty \sum_{n=0}^\infty a_{n+h}\overline{b_n} \rho^{2n+2h}  \frac{\Gamma(h+k)}{\Gamma(h+1)\Gamma(k)} (1-\rho^2)^k F(k,h+k;h+1;\rho^2)}_{B_1}\notag\\
		&+\underbrace{\sum_{h=1}^\infty \sum_{n=0}^\infty a_{n}\overline{b_{n+h}} \rho^{2n+2h}  \frac{\Gamma(h+k)}{\Gamma(h+1)\Gamma(k)} (1-\rho^2)^k F(k,h+k;h+1;\rho^2)}_{B_2}.\label{eq:hypergeometricShiftedSum}
	\end{align}
	
	In this calculation $\rho$ is taken to be a real variable, it is the distance from the origin in the polar coordinates. In what follows, however, it will be useful to take $\rho$ as a complex variable in order to estimate the integral,
	\begin{equation}\label{eq:rhoIntegral}
		\int_0^1 B(\rho) y'^k k\lt(\frac{\rho^2}{1-\rho^2}\rt) \frac{4\rho\mathrm{d}\rho}{(1-\rho^2)^2}.
	\end{equation}
	
	Expression \eqref{eq:hypergeometricShiftedSum} allows us to extend $B$ to a holomorphic function of $\rho$ in the unit disc. Furthermore, we can give a bound to it, which will be essential in further considerations. First of all the hypergeometric function is bounded on the unit disc uniformly for all $h$ via
	\[
		|F(k,h+k;h+1;\rho^2) (1-\rho^2)^{2k-1}| \leq 2^k. 
	\]
	This can be seen fist by noting that upon multiplying with $(1-\rho^2)^{2k-1}$ the singularity of the hypergeometric function at $\rho = 1$ is mended, see for example the linear transformation formula 15.3.6 in \cite{StegunHandbook}. Secondly Taylor series expansion of the hypergeometric function at $\rho = 0$ has as a limit, as $h \to \infty$,
	\[
		F(k,h+k;h+1,\rho^2) = \sum_{n=0}^\infty \frac{(k)_n(h+k)_n}{(h+1)_n n!} \rho^{2n} \longrightarrow \sum_{n=0}^\infty \frac{(k)_n}{n!} \rho^{2n}= \frac{1}{(1-\rho^2)^k}.
	\]
		
	Since $f_1$ and $f_2$ are cuspforms of weight $k$, the functions $|f_1|y^{\frac k2}, |f_2|y^{\frac k2}$ is a bounded function in the upper half plane, say by the values, $M_1$ and $M_2$ respectively. Then since the $\phi$'s are just a transfer of these functions to the hyperbolic disc model, we have that,
	\[
		|\phi_i(w)| y'^{\frac k2}\lt(\frac{1-|w|}{|1-w|^2}\rt)^{\frac{k}{2}} \leq M_i \qquad (i = 1,2).
	\]
	This can be used to get an upper bound on the sum appearing in $S_1$.
	\begin{align*}
		\lt|\sum_{n=0}^\infty a_{n+h} \overline{b_n} \rho^{2n+h} \rt|&= \lt|\frac 1{2\pi}\int_{0}^{2\pi} \phi_1(\rho e^{i\theta}) \overline{\phi_2(\overline{\rho}e^{i\theta})}e^{-ih\theta} \dth\rt|\\
		&\leq \frac{M_1M_2}{y'^k} \lt(\frac{1}{1-|\rho|^2}\rt)^k \frac 1{2\pi }\int_0^{2\pi}|1-|\rho|e^{i\theta}|^{2k}\dth\\
		&\leq \frac{M_1M_2}{y'^k} \frac{2^{2k}}{(1-|\rho|^2)^k}.
	\end{align*}
	Note that in the above calculation $\rho$ does not need to be real, in fact we took care in ensuring that the inequality remains valid for all $\rho$ in the unit disc. Now using this and the bound for the hypergeometric function we bound $B(\rho)$ in the unit disc,
	\begin{align*}
		B_1&\leq \sum_{h = 0}^\infty \lt|\sum_{n=0}^\infty a_{n+h}\overline{b_n} \rho^{2n+h}\rt| |\rho|^h \frac{\Gamma(h+k)}{\Gamma(k)h!}\frac{2^k}{(1-\rho^2)^{2k-1}}\\
		&\leq \frac{M_1M_2}{y'^k} \frac{2^{2k}}{(1-|\rho|^2)^k} \frac{2^k}{|1-\rho^2|^{2k-1}}\sum_{h=0}^\infty |\rho|^h \frac{\Gamma(k+h)}{\Gamma(k)h!}\\
		&\leq \frac{M_1M_2}{y'^k} \frac{2^{4k}}{(1-|\rho|^2)^{2k}}\frac{1}{|1-\rho^2|^{2k-1}}.
	\end{align*}•
	The sum $B_2$ can be bounded by the same quantity, and hence we obtain a bound for $B(\rho)$.
	

\section{The Harish-Chandra Selberg Transform}\label{sec:selbergTransform}

	Let us choose a localizing function $h$ which has an easy-to-calculate Fourier transform, since the first step in the three-step transform is a standard Fourier transform. We choose, as in \cite{SarnakIMRN94}, $$h(t) = h_{T}(t) = e^{-\pi(t-T)^2} + e^{-\pi(t+T)^2}.$$ Then its Fourier Transform is given by,
	\begin{equation}
		g(\xi) = 2\cos(\xi T) e^{-\pi\xi^2}\notag.
	\end{equation}
	The function $q$, obtained via a change of variables, satisfies
	\[
		q\lt(\frac{e^{\xi} + e^{-\xi}-2}{4}\rt) = \frac 12 g(\xi) = \cos(\xi T)e^{-\pi \xi^2}.
	\]
	After differentiating both sides with respect to $\xi$, we obtain,
	\[
		q'\lt(\frac{e^{\xi} + e^{-\xi} -2}{4}\rt) \frac{e^{\xi} - e^{-\xi}}{4} = -\lt(T\sin(\xi T) + 2\pi\xi \cos(\xi T) \rt)e^{-\pi \xi^2 }.
	\]
	We apply the above choice of localizing function $h$ to the integral \eqref{eq:rhoIntegral}. We first make a change of variables, changing the Euclidean distance $\rho$ with the hyperbolic distance $\tanh (r/2)$ from the origin. 
	\begin{align}
		\int_0^1 &B(r) y'^k k\lt(\frac{\rho^2}{1-\rho^2}\rt) \frac{4\rho\mathrm{d}\rho}{(1-\rho^2)^2} = \int_{0}^{\infty} B(\tanh r/2) y'^k k\lt(\sinh^2 \frac{r}{2}\rt) \sinh r \dr \notag\\
		&= \int_0^\infty B(\tanh (r/2)) y'^k \frac{1}{\pi}\int_r^\infty \frac{\lt(T \sin(\xi T) + 2\pi \xi \cos(\xi T) \rt) e^{-\pi\xi^2}  }{\sqrt{\sinh^2 \frac{\xi}{2} - \sinh^2 \frac{r}{2}}} \sinh r \dr \notag \\
		&= \frac{y'^k\sqrt{2}}{\pi} \int_0^\infty \underbrace{\int_0^\xi  \frac{B(\tanh r/2)\sinh r}{\sqrt{\cosh \xi - \cosh r}} \dr}_{H(\xi)} \lt(T\sin(\xi T) + 2\pi\xi \cos(\xi T)\rt)e^{-\pi \xi^2} \dxi \label{eq:infiniteIntegral}
	\end{align}
	The function $H(\xi)$ thus defined, can be extended to a holomorphic function of $\xi$ in the region $|\Im(\xi) | < \frac{\pi }{2}$, as the following formulation makes clear.
	\[
		H(\xi) = \int_0^1 \frac{B\lt(\tanh \frac{\xi \eta}{2}\rt) \sinh(\xi \eta) \eta}{\sqrt{\cosh \xi - \cosh \xi \eta}} \mathrm{d} \eta = \int_0^1 \frac{B\lt(\tanh \frac{\xi\eta}{2}\rt) \sinh \xi \eta}{\sqrt{ \frac{\sinh \xi(\frac{1+\eta}{2})}{\xi} \frac {\sinh \xi\lt(\frac{1-\eta}{2}\rt)}{\xi}}} \mathrm{d} \eta.
	\]
	Furthermore, from \eqref{eq:hypergeometricShiftedSum} it is apparent that $B(\rho)$ is an even function of $\rho$, and therefore, this integral shows $H(\xi)$ to be an odd function of $\xi$, thus making the integrand in the infinite integral \eqref{eq:infiniteIntegral} even. Our plan is to complete the integral to an integral on the whole real line, express $\cos$ and $\sin$ as linear combinations of exponential functions and move lines of integration up or down to give a bound for \eqref{eq:infiniteIntegral}. 
	
	In the region $|\Im(\xi)| < \frac{\pi}{2}$ we have $$\frac{\sinh \xi \lt(\frac{1\pm+\eta}{2}\rt)}{\xi} \geq  \frac{1\pm \eta}{2},$$ and $$\sinh \xi \eta \leq |\sinh \xi| \leq  e^{\Re(\xi)}, $$ 
	\[
		|B(\tanh \frac{\xi \eta}{2})| \leq \frac{M_1M_2}{y'^k} \lt(\frac{2}{1-|\tanh \xi \eta/2|^2}\rt)^{2k} \frac{1}{|1-\tanh^2 \xi \eta/2|}
	\] 
	The right hand side is an increasing function of $\eta$. So it is enough to consider the bound when $\eta = 1$. This allows you to write, for $\xi = x \pm i(\frac{\pi}2 -\frac 1T),$
	\[
		|B(\tanh \xi\eta/2)| \leq 2^{3k} \cosh^{2k} (x) T^{2k} \cosh^{2k-2} \lt(\tfrac x2\rt), 
	\]
	thus giving us on these two horizontal lines,
	\begin{align*}
		|H(\xi)| &\leq 2^{3k} \frac{M_1M_2}{y'^k}\cosh^{3k-1}x T^{2k} \int_0^1 \frac{1}{\sqrt{1-\eta^2}}\mathrm d \eta\\
		&\leq \pi \frac{M_1M_2}{y'^k} e^{(3k-1)x} T^{2k}.
	\end{align*}
	We bound the integral \eqref{eq:rhoIntegral} in order to give an upper bound for \eqref{Boundee}. After expressing $\cos x= (e^{ix}+ e^{-ix})/2$ we move the lines of integration to $\Im \xi = \pm (\frac \pi 2 -\frac 1T)$
	\begin{align*}
		\frac{y'^k}{\sqrt{2}\pi} \int_{-\infty}^\infty H(\xi) &(T\sin(\xi T) + 2\pi \xi \cos(\xi T) )e^{-\pi \xi^2}\dxi \\
		=& \frac{y'^k }{\sqrt 2 \pi} \int_{-\infty+i(\frac \pi2 - \frac 1T)}^{\infty+i\lt(\frac \pi2 - \frac 1T\rt)} H(\xi ) \lt(\frac {T}{2i} + \pi \xi\rt) e^{i\xi T} e^{-\pi \xi^2}\dx\\
		&+\frac{y'^k }{\sqrt 2 \pi} \int_{-\infty-i(\frac \pi2 - \frac 1T)}^{\infty-i\lt(\frac \pi2 - \frac 1T\rt)} H(\xi ) \lt(-\frac {T}{2i} + \pi \xi\rt) e^{-i\xi T} e^{-\pi \xi^2}\dx\\
		\leq & M_1M_2 T^{2k+1} e^{-\frac \pi2 T}e^{\frac{\pi^2}4+1}\int_{-\infty}^\infty e^{\frac{(3k-1)x}{\sqrt{\pi}}} \lt(1+\frac{\sqrt{\pi}|x|}{T}\rt) e^{- x^2} \dx\\
		\leq & 64 M_1M_2 T^{2k+1} e^{-\frac \pi2 T}e^{\frac{(3k-1)^2}{4\pi}}\lt(1+ \frac kT\rt) .
	\end{align*} 
	
	Summarizing everything up to now, we have proven the following inequality, using equations \eqref{eq:discIntegral}, \eqref{eq:rhoIntegral}, \eqref{eq:infiniteIntegral} this last piece of computation above.
	\begin{prop}\label{prop:kernelProdBound}
		Let $f_1$ and $f_2$ be half integral weight holomorphic modular forms of weight $k$ and level $N$. Let $\mathcal{V} = \Vol(\Gamma_0(N) \backslash \H)$ and let $M_i = \sup_{z \in \H}|f_i(z) y^{\frac{k}{2}} |$ for $i = 1,2$. Then one has the bound,
		\[
			\mathcal V\langle f_1(z) \overline{f_2(z)}y^k, K(z,z')\rangle\leq 64 M_1M_2 T^{2k+1} e^{-\frac \pi2 T}e^{\frac{(3k-1)^2}{4\pi}}\lt(1+ \frac kT\rt).
		\]
		Recall that the  quantity $T$ is encoded in the definition of $K(z,z')$.  
	\end{prop}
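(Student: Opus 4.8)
The plan is to assemble this bound by chaining together the computations of Sections~\ref{sec:AutomorphicKernel}--\ref{sec:selbergTransform}. First I would unfold the pairing, writing $\mathcal V\langle f_1(z)\overline{f_2(z)}y^k, K(z,z')\rangle = \iint_\H f_1(z)\overline{f_2(z)}y^k\, k(z,z')\,\dz$, and transfer the integral to the disc model $\D$ with $z'$ sent to the origin, obtaining the polar integral \eqref{eq:discIntegral}. Since $f_1,f_2$ are holomorphic, the transferred functions $\phi_1,\phi_2$ admit Taylor expansions in $w=\rho e^{i\theta}$, and the $\theta$-integral collapses via the classical formula of \cite{ErdelyiHighTransFuncI} for $\frac1{2\pi}\int_0^{2\pi}e^{ih\theta}\lt((1-\rho^2)/|1-\rho e^{i\theta}|^2\rt)^k\dth$ into the hypergeometric double sum $B(\rho)=B_1+B_2$ of \eqref{eq:hypergeometricShiftedSum}, leaving the radial integral \eqref{eq:rhoIntegral} to estimate.

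The second ingredient is an a priori bound on $B(\rho)$ valid for $\rho$ \emph{complex} in the unit disc, not merely for real $\rho$. For this I would use the uniform-in-$h$ hypergeometric estimate $|F(k,h+k;h+1;\rho^2)(1-\rho^2)^{2k-1}|\le 2^k$ (the singularity at $\rho=1$ being mended by the $(1-\rho^2)^{2k-1}$ factor, and the Taylor coefficients being dominated term-by-term by those of $(1-\rho^2)^{-k}$) together with the sup-norm control $|\phi_i(w)|\,y'^{k/2}\lt((1-|w|)/|1-w|^2\rt)^{k/2}\le M_i$. Recognising the inner $n$-sums as angular integrals of $\phi_1\overline{\phi_2}$ and applying the latter bound controls them by $\ll (M_1M_2/y'^k)(1-|\rho|^2)^{-k}$ uniformly in $h$; summing the resulting geometric series in $h$ then gives
\[
B(\rho)\ll \frac{M_1M_2}{y'^k}\,\frac{2^{4k}}{(1-|\rho|^2)^{2k}}\,\frac{1}{|1-\rho^2|^{2k-1}}
\]
throughout $\D$.

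Third, I would push the localizer $h_T(t)=e^{-\pi(t-T)^2}+e^{-\pi(t+T)^2}$ through the inverse Harish-Chandra--Selberg transform, getting $g(\xi)=2\cos(\xi T)e^{-\pi\xi^2}$ and, after differentiating, the explicit $q'$. Substituting into \eqref{eq:rhoIntegral} and changing variables $\rho=\tanh(r/2)$ produces the one-dimensional integral \eqref{eq:infiniteIntegral} with kernel $H(\xi)=\int_0^\xi \frac{B(\tanh r/2)\sinh r}{\sqrt{\cosh\xi-\cosh r}}\dr$. The substitution $r=\xi\eta$, $\eta\in[0,1]$, rewrites $H$ as an integral over $[0,1]$ free of the apparent singularity inside the strip $|\Im\xi|<\pi/2$, exhibiting $H$ there as holomorphic and (since $B$ is even) odd, so the integrand of \eqref{eq:infiniteIntegral} is even and may be completed to $\int_{\R}$.

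Finally I would expand $\cos$ and $\sin$ into exponentials and move the two resulting contours to $\Im\xi=\pm(\tfrac\pi2-\tfrac1T)$. On these horizontal lines the disc bound for $B$, with the elementary estimates $\sinh\xi\lt(\tfrac{1\pm\eta}{2}\rt)/\xi\ge\tfrac{1\pm\eta}{2}$ and $|\sinh\xi\eta|\le e^{\Re\xi}$, yields $|H(\xi)|\le\pi\,(M_1M_2/y'^k)\,e^{(3k-1)x}T^{2k}$ for $\xi=x\pm i(\tfrac\pi2-\tfrac1T)$, while $e^{\pm i\xi T}$ supplies the decay $e^{-(\pi/2-1/T)T}\asymp e^{-\pi T/2}$. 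The prefactor $y'^k$ in \eqref{eq:infiniteIntegral} cancels the $y'^{-k}$ in $H$; the remaining Gaussian integral in $x$ (completing the square to absorb $e^{(3k-1)x/\sqrt\pi}$ into $e^{-x^2}$, which produces the $e^{(3k-1)^2/4\pi}$ factor) together with the linear factor $\tfrac{T}{2}+\pi\xi$ (producing $1+k/T$) then assembles into the claimed bound with numerical constant $64$. The one genuine point of care is the contour shift: one must check that $H$ truly continues holomorphically across $\cosh\xi=\cosh r$ with no pole in $0<|\Im\xi|<\pi/2$ — which is exactly what the $\eta$-parametrisation guarantees — and that $|H|$ grows slowly enough on the shifted lines for $e^{-\pi\xi^2}$ to dominate; everything else is bookkeeping of the constants.
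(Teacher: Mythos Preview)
Your proposal is correct and follows essentially the same route as the paper: unfolding to the disc model \eqref{eq:discIntegral}, collapsing the $\theta$-integral into the hypergeometric sum $B(\rho)$ of \eqref{eq:hypergeometricShiftedSum}, bounding $B$ for complex $\rho$ via the sup-norm and the uniform hypergeometric estimate, then pushing $h_T$ through the inverse Selberg transform, writing $H(\xi)$ in the $\eta$-parametrisation to get holomorphicity and oddness in the strip, and finally shifting the contour to $\Im\xi=\pm(\tfrac\pi2-\tfrac1T)$ to extract the $e^{-\pi T/2}$ decay and the Gaussian factor $e^{(3k-1)^2/4\pi}$. The only slip is cosmetic: in your sup-norm inequality the factor should be $(1-|w|^2)/|1-w|^2$ rather than $(1-|w|)/|1-w|^2$.
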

	
	Using the inequality in Proposition \eqref{prop:kernelProdBound} and equation \eqref{eq:tripleProdSmooth} we obtain Proposition \ref{prop:tripleInnerBound}. 
	
	Note that the equation also has a continuous spectrum. Although we will not need it in this paper, let us record that result as well.
	\begin{prop}\label{prop:tripleInnerBoundEisenstein}
		Given $f$ and $g$ two half integral weight modular forms of weight $k$,
			\[
				\int_{T}^{T+1} \mathcal{V} \lt|\langle f\overline{g}y^k,E_{\mathfrak{a}}(*,\tfrac12 + it) \rangle \rt|^2  \d t \ll \|fy^{\frac k2}\|_{L^\infty}^2 \|gy^{\frac k2}\|_{L^\infty}^2  T^{4k+2}e^{-\pi T}.
			\]
			where $E_{\mathfrak{a}}$ is the Eisenstein series of level $N$, weight $0$ at the cusp $\mathfrak{a}$. 
	\end{prop}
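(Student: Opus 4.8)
The plan is to obtain this estimate from the automorphic-kernel identity already assembled in Sections~\ref{sec:AutomorphicKernel}--\ref{sec:selbergTransform}, simply reading off the continuous-spectrum contribution instead of the discrete one. Applying the construction of Section~\ref{sec:AutomorphicKernel} with $f_1=f$, $f_2=g$ and with the localizing function $h=h_T(t)=e^{-\pi(t-T)^2}+e^{-\pi(t+T)^2}$ of Section~\ref{sec:selbergTransform}, the chain of inequalities \eqref{eq:tripleProdSmooth} shows in particular that the nonnegative quantity
\[
	\frac{\mathcal V}{4\pi}\sum_{\mathfrak{b}\text{ cusp}}\int_{-\infty}^\infty |h_T(t)|^2\,\lt|\langle f\overline{g}y^k, E_{\mathfrak{b}}(*,\tfrac12+it)\rangle\rt|^2\,\dt
\]
is bounded above by $\lt(\sup_{z'\in\H}\lt|\mathcal V\langle f\overline{g}y^k, K(z,z')\rangle_z\rt|\rt)^2$, where $K$ is the automorphic kernel attached to $h_T$; this is because all the summands on the left side of \eqref{eq:tripleProdSmooth} are nonnegative and their total equals an $L^2$-norm which is at most that supremum.

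Next I would localize. For every $t\in[T,T+1]$ one has $h_T(t)\geq e^{-\pi(t-T)^2}\geq e^{-\pi}$, so $|h_T(t)|^2\gg 1$ uniformly on that interval. Discarding from the displayed sum all cusps $\mathfrak{b}\neq\mathfrak{a}$ and all $t\notin[T,T+1]$ — each of which contributes nonnegatively — and invoking Proposition~\ref{prop:kernelProdBound} to bound the supremum, one gets
\[
	\mathcal V\int_T^{T+1}\lt|\langle f\overline{g}y^k, E_{\mathfrak{a}}(*,\tfrac12+it)\rangle\rt|^2\,\dt \ll \lt(M_1M_2\,T^{2k+1}e^{-\frac\pi2 T}\rt)^2 \ll_k M_1^2M_2^2\,T^{4k+2}e^{-\pi T},
\]
where $M_1=\|fy^{k/2}\|_{L^\infty}$ and $M_2=\|gy^{k/2}\|_{L^\infty}$, which is exactly the asserted bound.

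I do not expect any real obstacle here: all of the analytic work is already contained in Proposition~\ref{prop:kernelProdBound}, and the only genuinely new input is the trivial uniform lower bound $|h_T(t)|^2\gg 1$ on $[T,T+1]$ together with positivity, which lets one throw away the other cusps and the discrete spectrum. The single point worth verifying is that the implied constant, which absorbs $\mathcal V$, the factor $4\pi$, the constant $e^{2\pi}$ coming from $h_T$, and the $k$-dependent exponential factor $e^{(3k-1)^2/4\pi}(1+k/T)$ from Proposition~\ref{prop:kernelProdBound}, depends only on $k$ and not on $N$ or $T$; this is immediate from the explicit shape of that proposition.
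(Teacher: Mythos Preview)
Your argument is correct and is exactly the approach the paper intends: the continuous-spectrum term in \eqref{eq:tripleProdSmooth} is nonnegative and bounded by the same kernel supremum, so Proposition~\ref{prop:kernelProdBound} together with the trivial lower bound $|h_T(t)|^2\gg 1$ on $[T,T+1]$ gives the result. One small remark on your final paragraph: the factor $\mathcal V$ is not absorbed into the implied constant but rather stays on the left-hand side (it is already present in \eqref{eq:tripleProdSmooth} as $\mathcal V/4\pi$), so there is no hidden $N$-dependence to worry about.
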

	
	Now we assume that $f$ is a fixed half integral modular form of weight $k$ and level $N_0$, and $\ell_1, \ell_2$ two primes. Let $$M = \sup_{z \in \H} |f(z)|y^\frac k2.$$ Then since we are only mapping the domain to itself, we also have
	\[
		M = \sup_{z \in \H} |f(\ell_i z)| \lt(\ell_i y\rt)^{\frac k2},
	\]
	thus if $f_i(z) = f(\ell_iz)$, then $M_i = M/\ell_i^{\frac k2}$. So we can bound the inner product of the product of $f_1$ and $f_2$ with the automorphic kernel. 
	\[
		\mathcal{V} \langle f(\ell_1z)\overline{f(\ell_2z)}y^k , K(z,z')\rangle \ll \frac{M^2}{(\ell_1\ell_2)^{\frac k2}} T^{2k+1} e^{-\frac{\pi}{2}T}e^{k^2} \lt(1+\frac kT\rt)
	\]
	Then using \eqref{eq:tripleProdSmooth} we can get the following bound.
	\begin{cor}\label{cor:myMainContribution}
		Preserving all the notations as above,
		\begin{equation*}	
			\sum_{T<|t_j|< 2T} |\langle f(\ell_1z)\overline{f(\ell_2z)}y^k, u_j(z) \rangle|^2 e^{-\pi |t_j|} \ll_f  T^{4k+3} (\ell_1\ell_2)^{-k}.
		\end{equation*}
	\end{cor}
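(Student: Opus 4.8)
The plan is to deduce Corollary~\ref{cor:myMainContribution} from the unit-window estimate of Proposition~\ref{prop:tripleInnerBound} by a dyadic summation over the range $T<|t_j|<2T$; no new analytic input is needed beyond what has already been established in this section.

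First I would set $f_i(z):=f(\ell_i z)$ for $i=1,2$, so that the relevant test function is $V(z)=f_1(z)\overline{f_2(z)}y^k$, and record the scaling of sup-norms noted in the text just above the corollary: since $z\mapsto\ell_i z$ maps $\H$ to itself, $M_i:=\|f_i y^{k/2}\|_{L^\infty}=M\,\ell_i^{-k/2}$, where $M:=\|f y^{k/2}\|_{L^\infty}$ depends only on $f$. Consequently $M_1^2 M_2^2=M^4(\ell_1\ell_2)^{-k}$, and this is the only place the $\ell$-dependence of the final bound enters.

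Next I would partition $\{T<|t_j|<2T\}$ into the $O(T)$ unit windows $\{S\le|t_j|\le S+1\}$ for integers $S\in[T,2T]$, and apply Proposition~\ref{prop:tripleInnerBound} with $f_1,f_2$ on each window, obtaining $\sum_{S\le|t_j|\le S+1}|\langle u_j,V\rangle|^2\ll M_1^2M_2^2\,S^{4k+2}e^{-\pi S}$ with implied constant depending only on $k$. On such a window $e^{-\pi|t_j|}\le e^{-\pi S}$, so multiplying by the weight makes each weighted window sum $\ll M_1^2M_2^2(2T)^{4k+2}$; summing over the $O(T)$ windows gives $\ll M_1^2M_2^2\,T^{4k+3}$, and substituting $M_1^2M_2^2=M^4(\ell_1\ell_2)^{-k}$ and absorbing $M^4$ into $\ll_f$ yields the claim. (The same argument also handles the weight $e^{+\pi|t_j|}$, which is the version actually invoked in the proof of Theorem~\ref{S2S3bound}: the window bound already supplies the compensating factor $e^{-\pi S}$, and $e^{\pi|t_j|}\ll e^{\pi S}$ on the window, so the exponentials cancel.)

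I do not expect a genuine obstacle at this stage: all of the difficulty is concentrated in Proposition~\ref{prop:tripleInnerBound}, which itself rests on the automorphic-kernel bound of Proposition~\ref{prop:kernelProdBound}. The only things to keep track of are the elementary scaling of the sup-norm of an oldform in the level parameter $\ell_i$ and the harmless loss of a single power of $T$ coming from summing $\sum_S S^{4k+2}$ over $S\in[T,2T]$, which is exactly what upgrades the exponent $4k+2$ in Proposition~\ref{prop:tripleInnerBound} to the $4k+3$ in the corollary.
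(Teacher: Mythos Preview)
Your proposal is correct and follows essentially the same route as the paper: the paper records the sup-norm scaling $M_i=M\ell_i^{-k/2}$, inserts it into the kernel bound of Proposition~\ref{prop:kernelProdBound}, and then invokes \eqref{eq:tripleProdSmooth} to pass to the spectral sum, which is exactly the content of applying Proposition~\ref{prop:tripleInnerBound} window by window as you do. Your explicit unit-window summation makes transparent where the extra power of $T$ comes from, and your parenthetical observation that the same argument gives the $e^{+\pi|t_j|}$-weighted version---the form actually invoked in the proof of Theorem~\ref{S2S3bound}---is both correct and worth stating.
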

	
	Hence we have filled one crucial step in the proof of \eqref{subconvexitybound}, the subconvexity bound for the $L(\hf,f, \chi)$ in terms of the conductor of $\chi$ the twisting character.


\section{Subconvexity}\label{sec:subconvexity}
	Now we combine everything we have proven to get at a subconvex bound for the $L$ function of a half integer weight modular form, twisted by characters modulo $Q$. First by Section \ref{sec:amplification} equation \eqref{eq:shortsum}, we get that,
	\begin{equation*}
		L\lt(\hf,f,\chi \rt) \ll (\sqrt{N}Q)^{-\hf} \max_{x\ll (\sqrt{N}Q)^{1+\vep}} \lt|\sum_m A(m) \chi(m) H\lt(\frac mx\rt)\rt|.
	\end{equation*}
	To get a bound for the sum within, we make use of the $S$ in \eqref{eq:amplifiedSum}. Ignoring all but the $\psi = \chi$ term we get that,
	\[
		\lt|\sum_m A(m) \chi(m) H\lt(\frac mx\rt)\rt| \leq \frac{S^\hf\log L}{L}.
	\]
	Finally from the inequality \eqref{eq:boundSWithShiftedSums} and Theorems \ref{thm:diagonal} and \ref{S2S3bound} we can say,
	\begin{align*}
		S &\leq \phi(Q) \sum_{\substack{\ell_1, \ell_2 \text{ primes}\\ (\ell_1\ell_2, NQ)=1}} \chi(\ell_1)\overline{\chi(\ell_2)}(S_1+S_2+S_3) \\
		&\ll Q (L X + \sqrt{X}L^3 + \frac{XL^{3+\vep}Q^{\theta}}{\sqrt{Q}}).
	\end{align*}
	So we can bound
	\begin{align*}
		L\lt(\hf,f,\chi\rt) &\ll Q^{-\hf} \max_{x\ll Q^{1+\vep}} \frac{S^\hf\log L}L \\
		&\ll \frac{\log L}{L}(QL + Q^{\hf}L^3 + Q^{\hf+\theta}L^{3+\vep})^\hf.
	\end{align*}
	The quantity inside the parentheses is minimized when $L = Q^{\frac 14-\frac{\theta}{2}}$, after such a choice we get the bound
	\begin{equation} \label{subconvexitybound}
		L\lt(\hf,f,\chi\rt) \ll Q^{\frac 38+\frac{\theta}{4}+\vep}.
	\end{equation}
	Thus Theorem \ref{thm:main} is proven.

\bibliographystyle{alpha}
\bibliography{MehmetBib}
\end{document}